\DeclareMathOperator*{\argmin}{arg\,min}
\newtheorem{theorem}{Theorem}
\newtheorem{lemma}{Lemma}
\newtheorem{definition}{Definition}
\newtheorem{corollary}{Corollary}
\newtheorem{remark}{Remark}
\begin{document}

\begin{center}

{\bf{\LARGE{Robust W-GAN-Based Estimation Under Wasserstein Contamination}}}

\vspace*{.25in}

 \begin{tabular}{cc}
Zheng Liu & Po-Ling Loh \\
\texttt{zliu577@wisc.edu} & \texttt{pll28@cam.ac.uk} \\
Department of Statistics & Department of Pure Maths and Mathematical Statistics \\
University of Wisconsin-Madison & University of Cambridge
\end{tabular}

\vspace*{.2in}

January 2021

\vspace*{.2in}

\end{center}



\newcommand{\E}{\ensuremath{\mathbb{E}}}
\newcommand{\thetastar}{\ensuremath{\theta^*}}
\newcommand{\betastar}{\ensuremath{\beta^*}}
\newcommand{\Sigmastar}{\ensuremath{\Sigma^*}}
\newcommand{\thetahat}{\ensuremath{\widehat{\theta}}}
\newcommand{\real}{\ensuremath{\mathbb{R}}}
\newcommand{\tr}{\ensuremath{\operatorname{tr}}}
\newcommand{\mprob}{\ensuremath{\mathbb{P}}}
\newcommand{\etastar}{\ensuremath{\eta^*}}


\begin{abstract}
Robust estimation is an important problem in statistics which aims at providing a reasonable estimator when the data-generating distribution lies within an appropriately defined ball around an uncontaminated distribution. Although minimax rates of estimation have been established in recent years, many existing robust estimators with provably optimal convergence rates are also computationally intractable. In this paper, we study several estimation problems under a Wasserstein contamination model and present computationally tractable estimators motivated by generative adversarial networks (GANs). Specifically, we analyze properties of Wasserstein GAN-based estimators for location estimation, covariance matrix estimation, and linear regression and show that our proposed estimators are minimax optimal in many scenarios. Finally, we present numerical results which demonstrate the effectiveness of our estimators.
\end{abstract}



\section{Introduction}
\label{sec:intro}

Robust estimation aims at providing a reasonable estimator for a functional of a data-generating distribution under small perturbations of the distribution. In the classical setting of Huber's $\epsilon$-contamination model \citep{huber:1964}, the goal is to estimate a parameter $\theta$ from i.i.d.\ data drawn from a mixture distribution $(1-\epsilon) P_{\theta} + \epsilon Q$, where $P_\theta$ is the uncontaminated distribution, $Q$ is an arbitrary distribution, and $\epsilon$ is the contamination proportion. Much rich theory has been established and estimators with optimal minimax rates have been found \citep{collins_wiens_1985, chen_gao_ren_2016, chen_gao_ren_2018}. However, Huber's contamination model only allows a fraction of data samples to be corrupted (on average). In our work, we consider a different contamination model which can perturb all data samples. 

The Wasserstein distance \citep{villani_cedric_2009}, defined as an optimal transport cost between two distributions, is becoming increasingly popular in machine learning and statistics \citep{gao2016distributionally, tolstikhin2017wasserstein, titouan2019sliced, WonEtal19}. Under the Wasserstein contamination model, we aim to estimate $\theta$ from i.i.d.\ data $X_i \sim P$, where $P$ is a perturbed distribution such that the Wasserstein distance between $P$ and the true distribution $P_\theta$ is bounded by $\epsilon$. In this paper, we study robustness of estimators under the Wasserstein contamination model in the settings of location estimation, covariance matrix estimation, and regression.

Recent work by \cite{Gao1} derived fascinating connections between generative adversarial networks (GANs) \citep{NIPS2014_5423} and robust estimation, suggesting new computational tools for robust estimation. GANs were first proposed in deep learning to learn the distribution of a data set, which could in turn be used to generate synthetic samples that are indistinguishable from true samples by the human eye.
GANs essentially attempt to minimize the probability divergence between a true distribution and learned distribution \citep{NIPS2016_6066}. For a classical GAN, the divergence is the Jensen-Shannon divergence; for the total variation GAN (TV-GAN) \citep{NIPS2016_6066}, the divergence is the total variation distance; and for the Wasserstein GAN (W-GAN) \citep{pmlr-v70-arjovsky17a}, the divergence is the Wasserstein distance.
At the outset, it may be somewhat surprising that the two models---one from the field of statistics and one from the field of deep learning---could be related.
\cite{Gao1} showed that under Huber's contamination model, a location estimator based on the TV-GAN has the same minimax rate as the Tukey median, which is known to be minimax optimal for robust estimation.

Under our Wasserstein contamination model, it is natural to study the performance of W-GANs for robust estimation.
W-GANs have an empirical advantage over classical GANs in the sense that training is more stable and they can avoid the problem of ``mode collapse," which has led to an uptick of interest in machine learning in recent years \citep{adler2018banach, cao2019multi, liu2019wasserstein}.
We present a general technique for upper-bounding the minimax rate of W-GAN-based estimators, and derive a general lower bound on the minimax rate via the modulus of continuity. As we will see, the upper and lower bounds match in many estimation settings of interest, indicating that the W-GAN-based estimator achieves the optimal minimax rate.
Our results are generally derived under the assumption that the uncontaminated distribution is Gaussian, and we also provide extensions to the case of elliptical distributions.

The rest of the paper is organized as follows: Section~\ref{sec:background} provides a detailed background on the Wasserstein contamination model and GAN model. Section~\ref{sec:upperbound} provides a general upper bound on the minimax rate, and Section~\ref{sec:lowerbound} provides a general lower bound on the minimax rate. Sections~\ref{sec:loc},~\ref{sec:cov}, and~\ref{sec:reg} apply the general upper and lower bounds to location estimation, covariance matrix estimation, and linear regression problems. Section~\ref{sec:num} provides numerical simulations. We conclude the paper in Section~\ref{sec:dis} with a discussion of future work. 

\textbf{Notation:} We use $\|v\|_2$ to denote the $\ell_2$-norm of a vector, $\|v\|_0$ for the $\ell_0$-norm of a vector, $\|A\|_2$ for the spectral norm of a matrix, and $\|A\|_F$ for the Frobenius norm of a matrix. We use $\lambda_{\min}(A)$ and $\lambda_{\max}(A)$ to denote the smallest and largest eigenvalues. We use $\|f(x)\|_L$ to denote the Lipschitz constant of a function $f$. We use $[p]$ to denote the set $\{1,2, \dots, p\}$. For two positive sequences $\{a_n\}$ and $\{b_n\}$, we use $a_n \lesssim b_n$ or $a_n = O(b_n)$ to denote the fact that $a_n \leq C b_n$ for some constant $C>0$, and also write $b_n = \Omega(a_n)$. We use $a_n \asymp b_n$ to denote the fact that both  $a_n \lesssim b_n$ and $b_n \lesssim a_n$ hold simultaneously.


\section{Background}
\label{sec:background}

In this section, we provide details about the contamination model we will consider in this paper. We also provide more details about GAN-based estimators and related work.


\subsection{Robust estimation model}

We first recall the framework of robust estimation introduced by \cite{huber:1964}. Assume that $P_{\thetastar}$ is the true distribution and $Q$ is an arbitrary distribution. In Huber's contamination model, we have $n$ i.i.d.\ observations $X_i \sim (1-\epsilon)P_{\thetastar} + \epsilon Q$, where $\epsilon \in (0,1)$ is fixed and possibly unknown.
Although the minimax risk of various estimation problems under Huber's contamination model has recently been derived~\citep{chen_gao_ren_2016, chen_gao_ren_2018}, the question of obtaining estimators which are \emph{computationally tractable} in high (or even moderate) dimensions has remained largely open.

Motivated by recent work \cite{Gao1}, which used tools from deep learning to devise computationally tractable robust estimators, we pivot our attention to the problem of robust estimation under a Wasserstein contamination model. Recall that the Wasserstein distance of order $q$ between two distributions $P$ and $Q$ is defined as  $W_q(P, Q) = \left(\inf_{\Pi(P,Q)}\E_{X,Y\sim\Pi}\|X-Y\|_2^q\right)^{\frac{1}{q}}$, where $\Pi(P,Q)$ is any coupling between $P$ and $Q$ \citep{villani_cedric_2009}. In our paper, we focus on the case $q = 1$, and simply write $W(P, Q)$. Suppose $P_{\thetastar}$ is the true distribution from a parametric family. In the context of robust estimation, our goal will be to obtain an estimator $\theta$ which achieves the minimax risk
\begin{equation}
\inf_{\theta} \sup_{\thetastar \in \Theta, P: W(P_{\thetastar}, P) \le \epsilon} \E_{X_i \sim P} L(\thetastar, \theta), \label{minimax1}
\end{equation}
where $\epsilon$ is the level of contamination, $L$ is the loss, and the expectation is taken over i.i.d.\ samples from the perturbed distribution $P$.

Robust estimation under Wasserstein contamination has only been lightly studied. One recent paper \citep{Zhu1} proposed an estimator based on minimum distance functionals \citep{Donoho2}, where the empirical distribution $P_n$ is first projected into a distribution class $\mathcal{G}$ according to a distance measure $D$ between distributions. The final estimator is defined in terms of the projected distribution $\widehat{Q} = \argmin_{Q\in \mathcal{G}}D(Q, P_n)$. For example, in the case of location estimation, the estimator $\widehat{\theta}$ is simply the mean of $\widehat{Q}$. \cite{Zhu1} showed how to design the projection set $\mathcal{G}$ and distance function $D$ so that the resulting estimator would have good finite-sample properties. Consequently, the restrictions on $\mathcal{G}$ depend on the specific estimation problem. Furthermore, the assumptions they impose on the class of uncontaminated distributions is more general than the Gaussian/elliptical classes we consider in this paper, leading to a somewhat more complicated analysis.\footnote{As an example, for second moment estimation under a Wasserstein-1 contamination model, \cite{Zhu1} only assumed that the true distribution $p^*$ has bounded $k^{\text{th}}$ moments.
The distance function $D$ is a weakened Wasserstein-1 distance, defined by $D(p,q) := \sup_{u\in \mathcal{U}} |\E_pu(X) - \E_qu(X)|$, where
\begin{equation}
\mathcal{U} = \{\max(0,v^Tx - a)|v\in \mathbb{R}^p, \|v\|_2\leq 1, a\in \mathbb{R}\} \cup \{v^Tx|v\in \mathbb{R}^d, \|v\|_2 \leq 1\}. \label{EqZhu2}
\end{equation}
\cite{Zhu1} then provided an upper bound on the maximum risk of the above estimator. For second moment estimation, assuming the true distribution has a bounded $k^{\text{th}}$ moment, they obtained a bound of $O\left(\left(\sqrt{\frac{p}{n}}+\epsilon\right)^{1-\frac{1}{(k-1)}}\right)$. }
Due to the complexity of the set $\mathcal{G}$, however, it is not practically feasible to implement the projection estimator---indeed, \cite{Zhu1} focused on deriving theoretical upper bounds for their estimator, rather than obtaining computationally feasible estimators. Furthermore, \cite{Zhu1} did not derive lower bounds for minimax risk under Wasserstein contamination, leaving the question of optimality unaddressed.

In our paper, we propose a GAN-based estimator and provide both upper and lower bounds on the minimax risk, thus showing optimality of our estimators. We also provide an explicit algorithm for obtaining the estimators (based on training a GAN). Interestingly, as we will see later, our proposed estimator shares some similarities with the  estimator proposed by \cite{Zhu1}, although one is derived starting from GAN models and the other is derived from the perspective of minimum distance functionals.


\subsection{Robust objective and WGAN algorithm}
\label{objective}
Under the Wasserstein contamination model, first suppose we wish to optimize a population-level version of the ``robust risk," given by
\begin{equation}
\label{EqnDObj}
\inf_\theta \sup_{\thetastar \in \Theta, P: W(P_{\thetastar}, P) \le \epsilon} D(P_{\thetastar}, P_\theta),
\end{equation}
where $D$ is some distance function and $\theta$ is a function of $P$.
One can hope that minimizing an empirical version of the objective~\eqref{EqnDObj} will lead to an estimator which achieves the minimax risk~\eqref{minimax1} measured in terms of $L$---indeed, that is what we find in our results.

For the Wasserstein contamination model, if we choose $D$ to be the Wasserstein-1 distance, then by the triangle inequality, since $W(P_{\thetastar}, P) \leq \epsilon$, we have
\begin{equation}
D(P_{\thetastar}, P_{\theta}) \leq D(P, P_{\theta}) + \epsilon, \label{triang}
\end{equation}
for any estimator $\theta$. Thus, we have
\begin{equation}
\sup_{\thetastar \in \Theta, P: W(P_{\thetastar}, P) \le \epsilon} D(P_{\thetastar}, P_\theta) \leq \sup_{\thetastar \in \Theta, P: W(P_{\thetastar}, P) \le \epsilon} D(P, P_\theta) + \epsilon.
\end{equation}
This leads to the following relaxation of the objective~\eqref{EqnDObj}:
\begin{equation*}
\inf_\theta \sup_{\thetastar \in \Theta, P: W(P_{\thetastar}, P) \le \epsilon} D(P, P_\theta).
\end{equation*}
Recalling that $\theta$ is a function of $P$ alone, this motivates the idea of obtaining an estimator $\thetahat$ by directly minimizing $D(P, P_\theta)$ for any $P$.

In practice, we only have access to the empirical distribution $P_n$ rather than the population distribution $P$, so we might instead minimize $D(P_n, P_\theta)$. However, the convergence rate of $D(P, P_n)$ may be slow: For example, when $D$ is the Wasserstein distance, we have $\mathbb{E}W(P, P_n) \gtrsim n^{-1/p}$ for any continuous distribution $P$ on $\mathbb{R}^p$~\citep{dudley1969speed}.
One way to remedy this problem is to replace the distance $D$ with a weaker distance $\tilde{D}$. We show how to do this when $D$ is the Wasserstein distance. By Kantorovich duality, we have the following duality form of the (rescaled) Wasserstein-1 distance between two distributions $P_1$ and $P_2$ \citep{villani_cedric_2009}:
\begin{equation}
B \cdot W(P_1, P_2)=\sup_{\|f(x)\|_L\leq B} \left\{\E_{P_1} f(X) - \E_{P_2} f(X)\right\},\label{Wgan}
\end{equation}
where $B > 0$ is a scale parameter. The supremum is taken over all Lipschitz-$B$ continuous functions. By this duality result, we  see that we can obtain a relaxation of the Wasserstein distance by taking a supremum over a subset $\mathcal{D}$ of Lipschitz-$B$ continuous functions.
This leads us to our final estimation algorithm
\begin{equation}
\widehat{\theta} = \argmin_\theta \sup_{f \in \mathcal{D}} \left\{\E_{P_n} f(X) - \E_{P_\theta} f(X)\right\}. \label{model}
\end{equation}

\begin{remark}
\label{RemZhu}
\cite{Zhu2} also considered the problem of minimizing $D(P_n, P_\theta)$, for a general distribution distance function. They relax the distance function $D$ to another distance $D'$ which they called an \textit{admissible distance}, with the desired property that $D'$ can distinguish $P_n$ and $P_\theta$ as well as $D$, but has better convergence properties than $D(P, P_n)$.
\end{remark}




In fact, the robust estimator~\eqref{model} is nothing but a GAN-based estimator (see Appendix~\ref{AppGAN} for more background). In other words, we can use GAN-based estimators for robust estimation.

\subsection{Related work}

The first work describing such a connection between robust estimation and GAN models was \cite{Gao1}. In that paper, the authors considered Gaussian location estimation under Huber's contamination model, and proved that TV-GAN-based estimators and JS-GAN-based estimators can achieve the minimax rate in this scenario. In follow-up work, \cite{Gao2} generalized the GAN-based estimator to covariance matrix estimation, also under Huber's contamination model. They introduced a ``learning via classification" framework based on a notion of a proper scoring rule.
Using different scoring rules, they arrived at formulations based on TV-GANs or JS-GANs, and proved that the GAN-based estimators thus obtained can achieve the minimax rate for covariance matrix estimation.

\cite{WuEtal20} also considered Huber's contamination model and generalized the TV-GAN and JS-GAN-based estimators from \cite{Gao1} to a general $f$-GAN-based estimator. Similar to our setting, they also studied a W-GAN-based estimator; however, in contrast to our work, they considered Huber's contamination model and proved that the estimator is not optimal for robust location estimation. They also did not perform relaxation of the objective as we have discussed in Section~\ref{objective}, underscoring the need for both steps in order to derive a minimax optimal estimator. We also note that \cite{WuEtal20} considered a sparse location estimation problem in their paper, resembling our setup in Section~\ref{spsloc}, but again, their focus was on $f$-GANs and Huber contamination.

We also mention a work by \cite{Zhu2} that provided additional insights regarding connections between robust estimation and GANs. The authors also viewed GAN models as minimizing a certain distance between distributions (cf.\ Remark~\ref{RemZhu} above).
However, their goal was somewhat different, as they sought to devise a projection algorithm which achieves an error (defined in a population-level sense) close to the oracle error given by the best approximation of the population-level distribution in the generator class. Furthermore, they focused on Wasserstein-2 contamination and did not provide explicit bounds for the estimation error of the parameters in their work. In this paper, we consider the Wasserstein-1 contamination model and provide minimax rates for both location and covariance matrix estimation, as well as sparse location and covariance estimation and regression.

Our work leverages GANs as a computational tool, and operates under the assumption that we can find an optimal solution of the GAN model using state-of-the-art deep learning techniques. Although W-GANs have been shown empirically to have good stability during training~\citep{pmlr-v70-arjovsky17a}, we note that theoretical guarantees for neural network training still remain elusive, and convergence to a global optimum is technically not guaranteed.

%


\section{A general upper bound}
\label{sec:upperbound}

In this section, we will prove a general upper bound on the risk for our W-GAN-based estimators. This will serve as a foundation for all proofs under different robust estimation problems, e.g., location estimation, covariance matrix estimation, and linear regression. We first specify the exact estimator we will use in practice.

In the W-GAN model, the function class in \eqref{w2} contains all Lipschitz-$B$ continuous functions. In practice, multilayer neural networks are used to realize classification functions. Accordingly, we define a function class consisting of neural network functions which are also Lipschitz-$B$ continuous, and use it for our estimation procedure~\eqref{model}. To make a neural network model Lipschitz continuous, we need to add some constraints to our network structure and weight parameters.

Specifically, we consider neural network models with the first layer involving the sigmoid activation function $\sigma(u)=\frac{1}{1+e^{-u}}$, and all hidden layers involving the ReLU activation function $\text{ReLU}(u) = \max(u,0)$. For the output layer, we choose not to use an activation function, since under this condition, our function class is symmetric: for any function $f \in \mathcal{D}$, we have $-f \in \mathcal{D}$. We add norm constraints on the weight parameters---the reason will be clear in Lemma~\ref{LemNNLip}.  A node in the first layer takes the form
\begin{equation}
\mathcal{D}^{(1)} = \left\{f(x)=\sigma(w^T x + b): w\in \mathbb{R}^p, \|w\|_2 \leq B,  b\in \mathbb{R}\right\}, \label{func1}
\end{equation}
and layer $l+1$, for $1\leq l < L-1$, takes the form
\begin{equation}
\mathcal{D}^{(l+1)} = \left\{f^{(l+1)}(x)=\text{ReLU}\left(\sum\limits_{i=1}^{d^{(l)}} w_i f^{(l)}_i(x)\right): \sum_{i=1}^{d^{(l)}} |w_i| \leq 1, f^{(l)}_i(x) \in \mathcal{D}^{(l)}\right\},
\label{func2}
\end{equation}
where $d^{(l)}$ is the number of nodes in the $l^{\text{th}}$ layer. The last layer takes the form
\begin{equation}
\mathcal{D} = \left\{f^{(L)}(x) = \sum\limits_{i=1}^{d^{(L-1)}} w_i f^{(L-1)}_i(x): \sum_{i=1}^{d^{(L-1)}} |w_i| \leq 1, f^{(L-1)}_i(x) \in \mathcal{D}^{(L-1)}\right\}. \label{func3}
\end{equation}
We use $\mathcal{D}(B, L)$ to denote the function class defined by equations~\eqref{func1},\eqref{func2}, and \eqref{func3}.


\begin{remark}
Comparing our algorithm with the algorithm of \cite{Zhu1} (cf.\ equation~\eqref{EqZhu2}), we can see that the latter function class can be viewed as a one-layer neural network, whereas our function class $\mathcal{D}(B,L)$ corresponds to a general multilayer neural network. Furthermore, the work of \cite{Zhu1} addressed parameter estimation in a setting with fewer distributional assumptions, requiring them to take a minimum over a more complex distribution class $\mathcal{G}$. Our work takes a minimum over the whole true distribution class, which is much simpler and does not involve a projection step. Further note that our upper bounds are proved in completely different ways.
\end{remark}

We also introduce a sparse neural network model which will be used for sparse estimation problems. Define the nodes in the first layer as 
\begin{equation}
\tilde{\mathcal{D}}^{(1)} = \left\{f(x)=\sigma(w^T x + b): w\in \mathbb{R}^p, \|w\|_2 \leq B, \|w\|_0 \leq k,  b\in \mathbb{R}\right\}, \label{func4}
\end{equation}
where $k < p$. The hidden layers and output layer are the same as in equations~\eqref{func2} and \eqref{func3}. We use $\mathcal{D}_{sp}(B, L, k)$ to denote the resulting neural network function class. Lemma~\ref{LemNNLip} and Corollary~\ref{CorNNLip} in Appendix~\ref{AppNN} show that the function classes satisfy the Lipschitz condition.

Our main theorem provides an upper bound between the true parameter and the estimated parameter in the ``Wasserstein distance" sense:

\begin{theorem}
\label{Upper1}
Suppose $P_{\thetastar}$ is the true distribution, $P$ satisfies $W(P,P_{\thetastar}) \leq \epsilon$, and $X_i \stackrel{i.i.d.}{\sim} P$. Let $\widehat{\theta}$ denote the estimator from the W-GAN model~\eqref{model}, with function class $\mathcal{D} = \mathcal{D}(B, L)$. Then for any $\delta > 0$, with probability at least $1-\delta$, we have
$$\sup_{f \in \mathcal{D}(B, L)} |\E_{P_{\thetastar}}f(X) - \E_{P_{\widehat{\theta}}}f(X)| \leq 2 C\left(2^{L}\sqrt{\frac{p}{n}} + \sqrt{\frac{2\log(1/\delta)}{n}}\right)+ \frac{B}{2} \epsilon.$$
\end{theorem}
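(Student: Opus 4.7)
The plan is to bound the quantity $\sup_{f\in\mathcal D(B,L)} |\E_{P_{\thetastar}}f(X) - \E_{P_{\widehat{\theta}}}f(X)|$ by inserting two intermediate measures---the empirical distribution $P_n$ and the observed distribution $P$---and then controlling the three resulting pieces via (i) the Wasserstein contamination budget, (ii) the defining optimality of $\widehat{\theta}$, and (iii) a uniform empirical-process bound on the neural-network class $\mathcal D(B,L)$. Two structural facts drive the argument: first, because the output layer in \eqref{func3} carries no activation, $\mathcal D(B,L)$ is symmetric (\,$-f\in\mathcal D$ whenever $f\in\mathcal D$\,), so every absolute-value supremum collapses to a signed supremum; second, by Corollary~\ref{CorNNLip} every $f\in\mathcal D(B,L)$ is Lipschitz with constant at most $B/4$, since the sigmoid is $1/4$-Lipschitz, the first-layer weight vector satisfies $\|w\|_2\le B$, and the subsequent ReLU layers are $1$-Lipschitz $\ell_1$-bounded combinations.

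Using these facts I would telescope $\E_{P_{\thetastar}}f-\E_{P_{\widehat{\theta}}}f = (\E_{P_{\thetastar}}f-\E_{P_n}f) + (\E_{P_n}f-\E_{P_{\widehat{\theta}}}f)$ and take the supremum over $f\in\mathcal D$. The defining property of $\widehat{\theta}$ in \eqref{model}, applied with $\thetastar$ as a candidate, gives $\sup_f(\E_{P_n}f-\E_{P_{\widehat{\theta}}}f)\le\sup_f(\E_{P_n}f-\E_{P_{\thetastar}}f)$, which together with symmetry reduces matters to $\sup_f|\E_{P_{\thetastar}}f-\E_{P_{\widehat{\theta}}}f|\le 2\sup_f|\E_{P_{\thetastar}}f-\E_{P_n}f|$. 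Splitting once more through $P$ yields $\sup_f|\E_{P_{\thetastar}}f-\E_{P_n}f|\le\sup_f|\E_{P_{\thetastar}}f-\E_Pf|+\sup_f|\E_Pf-\E_{P_n}f|$, and the first summand is dispatched by Kantorovich-Rubinstein duality \eqref{Wgan} together with the $B/4$-Lipschitz property: it is at most $(B/4)W(P_{\thetastar},P)\le B\epsilon/4$, which after the outer factor of $2$ contributes exactly the $B\epsilon/2$ term.

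To control the remaining empirical process $\sup_{f\in\mathcal D(B,L)}|\E_Pf-\E_{P_n}f|$ with high probability, I would observe that every $f\in\mathcal D(B,L)$ takes values in $[-1,1]$ (the first-layer sigmoid lies in $[0,1]$, and subsequent $\ell_1$-bounded signed combinations wrapped in ReLU preserve a range of at most $[-1,1]$), so McDiarmid's inequality applies with bounded differences constant $2/n$, yielding $\sup_f|\E_Pf-\E_{P_n}f|\le\E\sup_f|\E_Pf-\E_{P_n}f|+\sqrt{2\log(1/\delta)/n}$ with probability at least $1-\delta$. Standard symmetrization then bounds the expectation by twice the Rademacher complexity $\mathcal R_n(\mathcal D(B,L))$.

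The heart of the argument is the Rademacher bound $\mathcal R_n(\mathcal D(B,L))\lesssim 2^L\sqrt{p/n}$, which I would obtain by peeling layer by layer from the output inward. At the first layer, Talagrand's contraction principle applied to the $1/4$-Lipschitz sigmoid, combined with the standard $\ell_2$-linear Rademacher estimate $\E\sup_{\|w\|_2\le B}|\tfrac{1}{n}\sum_i\sigma_i w^T X_i|\lesssim B\sqrt{\E\|X\|_2^2/n}$, gives $\mathcal R_n(\mathcal D^{(1)})\lesssim B\sqrt{p/n}$ under the bounded-second-moment assumption on $P$. Each additional hidden layer---an $\ell_1$-bounded signed combination wrapped in a $1$-Lipschitz ReLU---inflates the Rademacher complexity by at most a factor of $2$ via the Ledoux-Talagrand contraction inequality applied to the absolutely-convex hull of the previous layer's class. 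Iterating this peeling $L-1$ times delivers $\mathcal R_n(\mathcal D(B,L))\le C\cdot 2^L\sqrt{p/n}$, with $B$ and the moment constants on $X$ absorbed into $C$; assembling this with the earlier steps produces the claimed inequality. I expect the main obstacle to be making the per-layer factor-of-$2$ inflation in the peeling argument fully rigorous---particularly ensuring that the Lipschitz parameter $B$ and the second-moment constants on $X$ can be absorbed into $C$ uniformly in $L$, without accidentally accumulating additional $B$'s at each hidden layer.
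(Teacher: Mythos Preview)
Your overall architecture---telescoping through $P$ and $P_n$, invoking the optimality of $\widehat\theta$, exploiting symmetry of $\mathcal D(B,L)$, and applying McDiarmid plus symmetrization---matches the paper exactly, and the layer-by-layer factor-of-$2$ peeling for the hidden layers is also what the paper does. The gap is in your treatment of the \emph{first} layer.

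You propose to bound $\mathcal R_n(\mathcal D^{(1)})$ by applying Talagrand contraction through the $1/4$-Lipschitz sigmoid and then invoking the linear estimate $\E\sup_{\|w\|_2\le B}\bigl|\tfrac1n\sum_i\sigma_i w^TX_i\bigr|\lesssim B\sqrt{\E\|X\|_2^2/n}$. This fails for two reasons. First, the bias $b$ in \eqref{func1} is unconstrained, so the pre-activation class $\{w^Tx+b:\|w\|_2\le B,\ b\in\mathbb R\}$ has infinite Rademacher complexity; contraction gives you nothing to land on. Second, even ignoring the bias, your bound would scale with $\E_P\|X\|_2^2$, and the theorem makes no moment assumption on $P$ beyond $W_1(P,P_{\thetastar})\le\epsilon$; a $W_1$ bound controls first moments but not second moments, so $\E_P\|X\|_2^2$ cannot be absorbed into a universal constant $C$.

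The paper sidesteps both issues by bounding $\mathcal R_n(\mathcal D^{(1)})$ via a distribution-free VC argument: since $\sigma$ is monotone, the class $\{\sigma(w^Tx+b):w\in\mathbb R^p,\ b\in\mathbb R\}$ has VC dimension $O(p)$, and Dudley's entropy integral then yields $\mathcal R_n(\mathcal D^{(1)})\lesssim\sqrt{p/n}$ uniformly over all distributions $P$. That is precisely what produces the $\sqrt{p/n}$ term with no dependence on $B$ or on moments of $X$, and it is the piece you need to replace.
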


The proof of Theorem~\ref{Upper1}, contained in Appendix~\ref{AppUpper1}, borrows ideas from the proof of Theorem 3.1 of \cite{Gao1}. We can obtain an analogous bound, proved in Appendix~\ref{AppUpper2}, for the sparse neural network function class:

\begin{theorem}
\label{Upper2}
Under the same assumptions in Theorem~\ref{Upper1}, with the function class changed to $\mathcal{D} = \mathcal{D}_{sp}(B, L, k)$, for any $\delta > 0$, with at least $1-\delta$ probability, we have
$$\sup_{f \in \mathcal{D}_{sp}(B,L,k)} |\E_{P_{\thetastar}}f(X) - \E_{P_{\widehat{\theta}}}f(X)| \leq 2 C\left(2^{L}\sqrt{\frac{k\log\frac{p}{k}}{n}} + \sqrt{\frac{2\log(1/\delta)}{n}}\right)+ \frac{B}{2}\epsilon.$$
\end{theorem}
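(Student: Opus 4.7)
The plan is to mirror the proof of Theorem~\ref{Upper1} and change only the ingredient that is sensitive to the function class, namely the Rademacher complexity bound. First I would perform the standard triangle-inequality decomposition: for any $f\in\mathcal{D}_{sp}(B,L,k)$,
$$|\E_{P_{\thetastar}}f - \E_{P_{\widehat\theta}}f| \leq |\E_{P_{\thetastar}}f - \E_{P_n}f| + |\E_{P_n}f - \E_{P_{\widehat\theta}}f|.$$
The second supremum is controlled by the first because $\widehat\theta$ minimises $\sup_{f\in\mathcal{D}_{sp}}|\E_{P_n}f - \E_{P_\theta}f|$ and $\thetastar$ is a feasible competitor. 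This yields
$$\sup_{f\in\mathcal{D}_{sp}}|\E_{P_{\thetastar}}f - \E_{P_{\widehat\theta}}f| \;\leq\; 2\sup_{f\in\mathcal{D}_{sp}}|\E_{P_{\thetastar}}f - \E_{P_n}f|,$$
which I would further split via $P$ into a ``population bias'' term $\sup_f|\E_{P_{\thetastar}}f - \E_{P}f|$ and a ``sampling'' term $\sup_f|\E_Pf - \E_{P_n}f|$.

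Next, I would handle the population bias term using Kantorovich duality together with Corollary~\ref{CorNNLip}, which asserts that every $f\in\mathcal{D}_{sp}(B,L,k)$ is Lipschitz with constant $B/4$ (the sparsity only strengthens the hypothesis of that corollary). This contributes at most $(B/4)W(P_{\thetastar},P) \leq (B/4)\epsilon$ to the inner supremum, and therefore $(B/2)\epsilon$ after the factor of $2$ above. The bounded, Lipschitz nature of $\mathcal{D}_{sp}$ also makes McDiarmid's inequality applicable to the sampling term, giving the required deviation $\sqrt{2\log(1/\delta)/n}$ about its expectation.

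The main obstacle is controlling the expected sampling fluctuation $\E\sup_{f\in\mathcal{D}_{sp}(B,L,k)}|\E_Pf - \E_{P_n}f|$, i.e., the Rademacher complexity of the sparse class. My plan is a layered (``peeling'') argument parallel to what is done for $\mathcal{D}(B,L)$ in Appendix~\ref{AppUpper1}, but with a sparsity-aware bound at the bottom layer. Partition the first-layer class by support,
$$\tilde{\mathcal{D}}^{(1)} \;=\; \bigcup_{S\subset[p],\,|S|=k} \tilde{\mathcal{D}}^{(1)}_S,$$
where $\tilde{\mathcal{D}}^{(1)}_S$ is the dense first-layer class of Theorem~\ref{Upper1} in the $k$-dimensional coordinate subspace indexed by $S$. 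The $k$-dimensional dense bound gives $\mathcal{R}_n(\tilde{\mathcal{D}}^{(1)}_S) \lesssim \sqrt{k/n}$, and a sub-Gaussian maximal inequality over the $\binom{p}{k}$ supports contributes an additional $\sqrt{\log\binom{p}{k}/n} \asymp \sqrt{k\log(p/k)/n}$, whence
$$\mathcal{R}_n\!\left(\tilde{\mathcal{D}}^{(1)}\right) \;\lesssim\; \sqrt{\tfrac{k\log(p/k)}{n}}.$$

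For the higher layers, the $\ell_1$-norm constraint $\sum_i |w_i|\leq 1$ in~\eqref{func2}--\eqref{func3}, together with $1$-Lipschitz contractivity of the ReLU (Talagrand's contraction lemma), lets me propagate this bound through each of the $L-1$ remaining layers at the cost of a multiplicative factor of $2$ per layer, exactly as in the proof of Theorem~\ref{Upper1}. This gives $\mathcal{R}_n(\mathcal{D}_{sp}(B,L,k)) \lesssim 2^{L}\sqrt{k\log(p/k)/n}$. Assembling the pieces via symmetrization yields the desired bound. The step I expect to be delicate is the maximal-inequality step above: one has to verify that the Rademacher suprema $\sup_{f\in\tilde{\mathcal{D}}^{(1)}_S}|\E_Pf - \E_{P_n}f|$ are sub-Gaussian with a variance proxy of the right order uniformly in $S$, so that the $\log\binom{p}{k}$ penalty enters correctly and cleanly absorbs into a single $\sqrt{k\log(p/k)/n}$ rate; everything else is essentially bookkeeping that parallels the proof of Theorem~\ref{Upper1}.
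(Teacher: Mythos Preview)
Your overall plan is correct and matches the paper's proof structure: the paper's proof of Theorem~\ref{Upper2} (Appendix~\ref{AppUpper2}) simply reruns the argument of Theorem~\ref{Upper1}, replacing the uniform concentration bound of Lemma~\ref{LemConcentration} by Corollary~\ref{CorConcentration}; everything else (triangle inequality, optimality of $\widehat\theta$, Kantorovich duality with the $B/4$-Lipschitz constant from Corollary~\ref{CorNNLip}, McDiarmid, symmetrization, and the $2^L$ layer-peeling) is unchanged. The one genuine difference is how you handle the first-layer complexity. The paper does \emph{not} use a sub-Gaussian maximal inequality over the $\binom{p}{k}$ supports; instead it bounds the VC dimension of the union $\tilde{\mathcal{D}}^{(1)}=\bigcup_S \tilde{\mathcal{D}}^{(1)}_S$ directly via Sauer's lemma (Lemma~\ref{VCdim_lemma}), obtaining VC dimension $O(k\log(p/k))$, and then feeds this into Dudley's entropy integral exactly as in the dense case. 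Your maximal-inequality route is a valid alternative that yields the same rate, but the VC approach is cleaner: it avoids the ``delicate'' step you flag (verifying sub-Gaussianity of the per-support suprema with the correct variance proxy), and it is reusable---the same lemma is later applied to the contiguous-support class $\mathcal{D}_1(B,L,2k)$ in Theorem~\ref{SparseCovUpper1}.
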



%
%

In the sections below, we will show how to translate the upper bound on $\sup_{f \in \mathcal{D}} |\E_{P_{\thetastar}}f(X) - \E_{P_{\widehat{\theta}}}f(X)|$ into a bound on $\|\thetahat - \thetastar\|$ in the settings of location estimation, covariance estimation, and linear regression.


\section{A general lower bound}
\label{sec:lowerbound}

In this section, we present a general lower bound on the minimax risk for the Wasserstein contamination model. The bound is based on the modulus of continuity, which was first proposed and studied by \cite{Donoho1, Donoho2} and \cite{Donoho3}. In later sections, we derive lower bounds for several problems, including location estimation, covariance matrix estimation, and regression, from the main theorem. These lower bounds will help establish minimax optimality of our proposed W-GAN-based algorithms.

\begin{definition}
Let $L$ be a loss function. For $\epsilon > 0$ and a parameter space $\Theta$, we define the \emph{modulus of continuity}
$$ m(\epsilon, \Theta) \triangleq  \sup_{W(P_{\theta_1}, P_{\theta_2})\leq \epsilon, \theta_1, \theta_2 \in \Theta} L({\theta_1}, {\theta_2}).$$
\end{definition}

The modulus of continuity can be thought of as the maximum loss the perturbation can induce within an $\epsilon$ ball. For the Wasserstein contamination model, we have the following probabilistic lower bound on the minimax risk~\eqref{minimax1} based on the modulus of continuity:

\begin{theorem}
\label{ThmModulus}
Suppose $L$ is a loss function which satisfies the triangle inequality. Also suppose there is some $\mathcal{M}(0)$ such that for $\epsilon=0$, the inequality
$$ \inf_{\widehat{\theta}} \sup_{\thetastar \in \Theta, P:W(P_{\thetastar},P)\leq \epsilon} \mathbb{P}(L(\thetastar, \widehat{\theta}) \geq \mathcal{M}(\epsilon)) \geq c $$
holds for some constant $c>0$. Then for any $\epsilon \in [0,1]$, the same holds for $\mathcal{M}(\epsilon) \asymp \mathcal{M}(0) \vee m(\epsilon, \Theta)$.
\end{theorem}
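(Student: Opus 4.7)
The plan is to split the bound $\mathcal{M}(\epsilon) \asymp \mathcal{M}(0) \vee m(\epsilon,\Theta)$ into its two components and prove each separately, then take the better of the two. The first piece, the $\mathcal{M}(0)$ term, should be essentially free: any pair $(\thetastar,P)$ with $P = P_{\thetastar}$ satisfies $W(P_{\thetastar},P) = 0 \le \epsilon$, so the feasible set of the supremum for any $\epsilon \ge 0$ contains the feasible set for $\epsilon = 0$. Monotonicity of the sup then transports the assumed $\epsilon=0$ lower bound verbatim to arbitrary $\epsilon$, at threshold $\mathcal{M}(0)$ and with the same constant $c$.

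The second piece, the modulus term, I would obtain via a standard two-point testing reduction. Fix $\theta_1,\theta_2 \in \Theta$ with $W(P_{\theta_1},P_{\theta_2}) \le \epsilon$ and $L(\theta_1,\theta_2)$ arbitrarily close to $m(\epsilon,\Theta)$. The crucial observation is that if we take $P = P_{\theta_1}$, then both configurations $(\thetastar,P) = (\theta_1,P_{\theta_1})$ and $(\thetastar,P) = (\theta_2,P_{\theta_1})$ belong to the feasible set of the sup: the first trivially, and the second because $W(P_{\theta_2},P_{\theta_1}) \le \epsilon$ by hypothesis. Since the data are drawn from the same distribution $P_{\theta_1}$ in both cases, $\widehat{\theta}$ has an identical law under the two configurations. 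By the triangle inequality for $L$, the event $\{L(\theta_1,\widehat{\theta}) < L(\theta_1,\theta_2)/2\}$ is contained in $\{L(\theta_2,\widehat{\theta}) \ge L(\theta_1,\theta_2)/2\}$, which forces
$$\max\!\Big(\mathbb{P}(L(\theta_1,\widehat{\theta}) \ge L(\theta_1,\theta_2)/2),\; \mathbb{P}(L(\theta_2,\widehat{\theta}) \ge L(\theta_1,\theta_2)/2)\Big) \;\ge\; \tfrac{1}{2}.$$
Taking the sup over the admissible pair and letting $L(\theta_1,\theta_2) \to m(\epsilon,\Theta)$ yields a lower bound with constant $1/2$ at threshold $m(\epsilon,\Theta)/2$.

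To finish, I would combine the two bounds: the minimax probability at threshold $\mathcal{M}(0) \vee \tfrac{1}{2} m(\epsilon,\Theta)$ is at least $\min(c,1/2)$, and $\mathcal{M}(0) \vee \tfrac{1}{2} m(\epsilon,\Theta) \asymp \mathcal{M}(0) \vee m(\epsilon,\Theta)$, which is the claimed conclusion with constant $\min(c,1/2) > 0$. There is no real obstacle here; the only step that requires care is the two-point construction, specifically the recognition that one may reuse $P = P_{\theta_1}$ as the data-generating distribution for both parameter hypotheses. This is precisely what makes Wasserstein contamination "hide" the truth among Wasserstein-indistinguishable parameters, and once noticed, the triangle inequality for $L$ together with monotonicity of the supremum completes the argument without further machinery.
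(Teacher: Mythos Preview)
Your proposal is correct and follows the same overall two-part strategy as the paper: monotonicity in $\epsilon$ to inherit the $\mathcal{M}(0)$ bound, and a two-point reduction plus the triangle inequality for $L$ to obtain the $m(\epsilon,\Theta)/2$ bound, followed by combining the two.

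The one genuine difference is in the two-point construction. You take the contaminated distribution to be $P=P_{\theta_1}$ itself, noting that both $(\theta_1,P_{\theta_1})$ and $(\theta_2,P_{\theta_1})$ lie in the feasible set directly from the hypothesis $W(P_{\theta_1},P_{\theta_2})\le\epsilon$. The paper instead forms the mixture $P'=\tfrac{1}{2}P_{\eta_1}+\tfrac{1}{2}P_{\eta_2}$ and uses the Kantorovich dual representation of $W_1$ to show $W(P',P_{\eta_i})=\tfrac{1}{2}W(P_{\eta_1},P_{\eta_2})\le\epsilon$ for $i=1,2$. Both choices lead to the same constant $1/2$; your version is more elementary because it avoids the convexity/duality step entirely, while the paper's mixture construction mirrors the classical Le~Cam argument used for TV and Huber contamination (where the mixture is the natural object). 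Your handling of the case where the supremum defining $m(\epsilon,\Theta)$ is not attained, via a limiting argument, is also slightly cleaner than the paper's, which tacitly assumes attainment.
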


The proof of Theorem~\ref{ThmModulus} is contained in Appendix~\ref{AppThmModulus}.

\begin{remark}
Theorem~\ref{ThmModulus} resembles Theorem 5.1 in \cite{chen_gao_ren_2016}. However, we consider Wasserstein contamination, whereas they considered Huber's contamination model. In their paper, they also defined the modulus of continuity slightly differently, using the total variation distance, i.e., $ m(\epsilon, \Theta) =  \sup\limits_{TV(P_{\theta_1}, P_{\theta_2})\leq \frac{\epsilon}{1-\epsilon}} L({\theta_1}, {\theta_2})$.
\end{remark}

Theorem~\ref{ThmModulus} says that for any estimator $\widehat{\theta}$, there always exists a true distribution $P_\theta$ and contaminated distribution $P$ such that the loss is at least $\mathcal{M}(\epsilon)$ with some nonzero probability $c$. Thus, the worst-case risk for the estimator is at least $\Omega(\mathcal{M}(\epsilon))$.
We will see that in many cases, the W-GAN-based estimator achieves this risk, showing that it is minimax optimal.


\begin{remark}
\cite{Zhu1} also used the modulus of continuity to characterize the minimax risk under Wasserstein contamination. However, their notion of risk was a population-level version, and they showed that their projection algorithm (defined by projecting the true distribution into their class $\mathcal{G}$) achieved the minimax risk. They then computed bounds on the modulus continuity for various estimation problems and choices of $\mathcal{G}$.
\end{remark}


\section{Location estimation}
\label{sec:loc}

In this section, we consider robust location estimation problems~\citep{lehmann2006theory}. We provide upper and lower bounds for the W-GAN-based estimator and show that the estimator achieves optimal convergence rate. We consider location estimation for Gaussian and elliptical distributions.


\subsection{Gaussian location estimation}

In Gaussian location estimation, we assume that the true data-generating distribution is multivariate Gaussian. We observe i.i.d.\ samples from the distribution and would like to use the samples to estimate the true mean of the Gaussian distribution. We have the following upper bound for the W-GAN-based estimator:

\begin{theorem}
\label{LocUpper}
Assume the true distribution is $P_{\thetastar} = \mathcal{N}(\thetastar, I_p)$, with parameter space $\Theta = \real^p$. Suppose the contaminated distribution $P$ satisfies $W(P,P_{\thetastar}) \leq \epsilon$, and $X_i \stackrel{i.i.d.}{\sim} P$. Consider the function class $\mathcal{D}=\mathcal{D}(B, L)$. Assume $2^L \cdot \frac{p}{n} + \epsilon^2 \leq c$ for some sufficiently small constant $c$. Then the estimator $\widehat{\theta}$ from the W-GAN model~\eqref{model} satisfies
$$ \|\widehat{\theta} - \thetastar\|_2 \lesssim  2^L \sqrt{\frac{p}{n}} \vee \epsilon, $$
with probability at least $1 - e^{-(p+n\epsilon^2)}$, where the constant prefactor depends only on $B$.
\end{theorem}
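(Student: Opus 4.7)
The plan is to combine the general upper bound of Theorem~\ref{Upper1} with an explicit ``witness'' function $f \in \mathcal{D}(B,L)$ for which $|\E_{P_{\thetastar}} f(X) - \E_{P_{\widehat\theta}} f(X)|$ grows proportionally to $\|\widehat\theta - \thetastar\|_2$. Choosing $\delta = e^{-(p+n\epsilon^2)}$ in Theorem~\ref{Upper1} so that $\sqrt{2\log(1/\delta)/n} \lesssim \sqrt{p/n} + \epsilon$ immediately yields
\[
\sup_{f \in \mathcal{D}(B,L)} \left|\E_{P_{\thetastar}} f(X) - \E_{P_{\widehat{\theta}}} f(X)\right| \lesssim 2^L \sqrt{p/n} + \epsilon
\]
with probability at least $1 - e^{-(p+n\epsilon^2)}$. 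It then remains to lower-bound the left side by a multiple of $\|\widehat\theta - \thetastar\|_2$.

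For the witness I would use a single sigmoid neuron: take $w = B(\widehat\theta - \thetastar)/\|\widehat\theta-\thetastar\|_2$ so that $\|w\|_2 = B$, set $b = -w^T(\thetastar + \widehat\theta)/2$, and let $f(x) = \sigma(w^T x + b)$. Since $\sigma(\cdot) \in (0,1)$ implies $\mathrm{ReLU}(\sigma(\cdot)) = \sigma(\cdot)$, this function can be realized inside $\mathcal{D}(B,L)$ for any $L \geq 1$ by passing the first-layer sigmoid output through each subsequent ReLU layer with a single unit weight. Under $P_{\thetastar}$, $w^T X + b \sim \mathcal{N}(-\Delta, B^2)$ and under $P_{\widehat\theta}$, $w^T X + b \sim \mathcal{N}(\Delta, B^2)$, where $\Delta = \tfrac{B}{2}\|\widehat\theta-\thetastar\|_2$. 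Defining $g(t) = \E_{Z \sim \mathcal{N}(0,B^2)}[\sigma(Z+t)]$ and using $g(-t) = 1 - g(t)$ (which follows from $\sigma(-u) = 1 - \sigma(u)$ together with the symmetry of $Z$), the witness contributes $\E_{P_{\widehat\theta}} f(X) - \E_{P_{\thetastar}} f(X) = 2g(\Delta) - 1$.

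The quantitative core is the lower bound $2g(\Delta) - 1 \geq c_B \Delta$ for $\Delta$ in a bounded range. Differentiating under the integral gives $g'(t) = \E[\sigma(Z+t)(1 - \sigma(Z+t))]$, which can be bounded below by a constant $c_B > 0$ uniformly on any bounded interval $[-M,M]$ via the estimate $g'(t) \geq \sigma'(|t|+B)\cdot \mathbb{P}(|Z| \leq B)$. Under the smallness hypothesis $2^L p/n + \epsilon^2 \leq c$, the upper bound above forces $2g(\Delta) - 1$ to be at most a small constant, which in turn traps $\Delta$ inside some interval $[-M, M]$ on which $g'$ is bounded below; the mean value theorem then yields $2g(\Delta) - 1 \geq c_B \Delta$. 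Combining the two bounds and rearranging, $\|\widehat\theta - \thetastar\|_2 = 2\Delta/B \lesssim 2^L\sqrt{p/n} \vee \epsilon$, with prefactor depending only on $B$.

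The main obstacle will be the calibration in the previous paragraph: one must choose $M$ and $c_B$ in tandem as functions of $B$, using the smallness hypothesis to guarantee $\Delta \leq M$, and also formally verify that the witness lies in $\mathcal{D}(B,L)$ despite the layer-wise Lipschitz and unit $\ell_1$ weight constraints. Everything else reduces to routine computations with Gaussian integrals of the sigmoid.
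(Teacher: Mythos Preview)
Your proposal is correct and follows essentially the same route as the paper: apply Theorem~\ref{Upper1} with $\delta = e^{-(p+n\epsilon^2)}$, then lower-bound the supremum by plugging in a single first-layer sigmoid neuron (propagated trivially through the remaining layers), reducing to a one-dimensional function $g(t)=\E\,\sigma(Z+t)$ whose strict monotonicity, together with the smallness hypothesis, gives $|g(\Delta)-g(0)| \gtrsim \Delta$. The only cosmetic difference is that the paper centers the bias at $\thetastar$ (so the comparison is $g(0)$ versus $g(u^T(\widehat\theta-\thetastar))$ and the supremum over directions $u$ is taken at the end), whereas you center at the midpoint $(\thetastar+\widehat\theta)/2$ and pick the optimal direction $u$ up front to get the symmetric form $2g(\Delta)-1$; both lead to the same bound with the same dependence on $B$.
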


The proof of Theorem~\ref{LocUpper} is contained in Appendix~\ref{AppLocUpper}.


\begin{remark}
\label{rmkLoc1}
We can also see from the proof of Theorem~\ref{LocUpper} that the assumption of the true distribution being Gaussian can be replaced by a more general assumption that the distribution of $u^T(X-\thetastar)$ is symmetric around 0, for all unit vectors $u$, when $X\sim P_{\thetastar}$.
\end{remark}

The proof of Theorem~\ref{LocUpper} yields an exponential dependence on $L$ in the prefactor of the upper bound. Thus, choosing $L=1$ leads to the tightest rates. This may seem surprising at first, since approximation results for neural networks generally require the number of layers to be large~\citep{Cyb89}. On the other hand, recall that our estimators are derived using a \emph{relaxation} of the class of $B$-Lipschitz functions (cf.\ Section~\ref{objective}), and it is not inherently clear that a tighter relaxation should lead to a more robust estimator. The fact that the upper bound increases with $L$ comes from the fact that Theorem~\ref{LocUpper} uses the uniform concentration bound in Theorem~\ref{Upper1}, which naturally becomes looser as the size of the function class increases.

Next, we derive a lower bound for robust location estimation under Wasserstein contamination, using Theorem~\ref{ThmModulus}:

\begin{theorem}
\label{LocLower}
Assume the true distribution is $P_{\thetastar} = \mathcal{N}(\thetastar, I_p)$, with parameter space $\Theta=\real^p$. Suppose the contaminated distribution $P$ satisfies $W(P,P_{\thetastar}) \leq \epsilon$, and $X_i \stackrel{i.i.d.}{\sim} P$. Then there exist constants $C, c>0$ such that
$$ \inf_{\widehat{\theta}} \sup_{\thetastar \in \Theta, P:W(P_{\thetastar},P)\leq \epsilon} \mathbb{P}\left(\|\thetastar - \widehat{\theta}\|_2 \geq C\left(\sqrt{\frac{p}{n}} \vee \epsilon\right)\right) \geq c.$$
\end{theorem}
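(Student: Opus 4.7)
The plan is to apply Theorem~\ref{ThmModulus} with loss $L(\theta_1,\theta_2)=\|\theta_1-\theta_2\|_2$, which clearly satisfies the triangle inequality. This reduces the problem to producing two ingredients: a baseline ``noise-only'' probabilistic lower bound $\mathcal{M}(0)\gtrsim\sqrt{p/n}$ for classical Gaussian mean estimation (corresponding to the $\epsilon=0$ case), and a lower bound on the modulus of continuity of the form $m(\epsilon,\Theta)\gtrsim\epsilon$. Theorem~\ref{ThmModulus} then immediately delivers $\mathcal{M}(\epsilon)\asymp\sqrt{p/n}\vee\epsilon$.

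For the modulus bound, I would first verify that for any $\theta_1,\theta_2\in\real^p$ one has $W(P_{\theta_1},P_{\theta_2})=\|\theta_1-\theta_2\|_2$. The upper bound follows from the translation coupling $(X,X+\theta_2-\theta_1)$ with $X\sim P_{\theta_1}$, which yields $\E\|X-(X+\theta_2-\theta_1)\|_2=\|\theta_1-\theta_2\|_2$. The matching lower bound follows from Kantorovich duality~\eqref{Wgan} applied to the linear $1$-Lipschitz test function $f(x)=u^\top x$ with $u=(\theta_1-\theta_2)/\|\theta_1-\theta_2\|_2$, giving $\E_{P_{\theta_1}}f-\E_{P_{\theta_2}}f=\|\theta_1-\theta_2\|_2$. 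Choosing $\theta_1=0$ and $\theta_2=\epsilon e_1$ therefore shows $m(\epsilon,\Theta)\geq\epsilon$.

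For the baseline, I would invoke the standard Gaussian-mean minimax lower bound in $\ell_2$ via Fano's method applied to a Gilbert--Varshamov packing. One takes points of the form $\theta_j=\delta v_j\in\{0,\delta\}^p$ with $\delta\asymp\sqrt{p/n}/\sqrt{p}=1/\sqrt{n}$, where the $v_j$ form a packing of the Hamming cube with at least $2^{p/8}$ codewords and pairwise Hamming distance $\geq p/4$; this yields $\|\theta_j-\theta_{j'}\|_2\gtrsim\sqrt{p/n}$. Since $\mathrm{KL}(P_{\theta_j}^{\otimes n}\,\|\,P_{\theta_{j'}}^{\otimes n})=\tfrac{n}{2}\|\theta_j-\theta_{j'}\|_2^2\lesssim p$, choosing the packing radius small enough keeps the KL below any prescribed fraction of $\log M\asymp p$, so Fano's inequality produces an absolute constant $c>0$ with
\begin{equation*}
\inf_{\widehat{\theta}}\sup_{\thetastar\in\Theta}\mprob\!\left(\|\widehat{\theta}-\thetastar\|_2\geq C\sqrt{p/n}\right)\geq c,
\end{equation*}
which is exactly the $\epsilon=0$ hypothesis of Theorem~\ref{ThmModulus} with $\mathcal{M}(0)\asymp\sqrt{p/n}$. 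Combining this with $m(\epsilon,\Theta)\gtrsim\epsilon$ via Theorem~\ref{ThmModulus} gives the claim.

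There is no real obstacle here: both ingredients are textbook, and the work is just a clean bookkeeping exercise. The only mild care point is keeping the argument probabilistic (rather than expectation-level) throughout, so that it slots directly into Theorem~\ref{ThmModulus}; Fano's inequality already delivers the probabilistic form, and the modulus half is a deterministic two-point construction, so no conversion step is required.
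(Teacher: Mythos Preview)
Your proposal is correct and follows essentially the same approach as the paper: apply Theorem~\ref{ThmModulus}, bound $m(\epsilon,\Theta)\geq\epsilon$ via the translation coupling of two Gaussians with shifted means, and obtain $\mathcal{M}(0)\asymp\sqrt{p/n}$ via Fano's method over a packing. The only cosmetic differences are that the paper uses a $1/2$-packing of the Euclidean unit ball (scaled by $\delta$) rather than a Gilbert--Varshamov Hamming-cube packing, and the paper does not bother proving the reverse inequality $W(P_{\theta_1},P_{\theta_2})\geq\|\theta_1-\theta_2\|_2$, since only the upper bound is needed for the modulus argument.
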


The proof of Theorem~\ref{LocLower} is contained in Appendix~\ref{AppLocLower}.

\begin{remark}
We can see that the upper bound for our W-GAN-based estimator matches the lower bound, which implies that the W-GAN based estimator is minimax optimal. Under Huber's contamination model, the minimax rate is also $\Theta(\frac{p}{n} \vee \epsilon^2)$ and can be attained by Tukey's median or the TV-GAN-based estimator~\citep{Gao1}.
\end{remark}


\subsection{Sparse Gaussian location estimation}
\label{spsloc}

In sparse Gaussian location estimation \citep{sparse_loc2, sparse_loc3, sparse_loc1}, the true data-generating distribution is Gaussian and the true mean vector is sparse. Under the Wasserstein contamination model, we observe i.i.d.\ samples from the contaminated distribution and use them to estimate the sparse mean. We define the parameter space
\begin{equation}
\Theta = \left\{\theta \in \mathbb{R}^p: \|\theta\|_0 \leq k < \frac{p}{2}\right\}.\label{space:spsGauss}
\end{equation}

We have the following upper and lower bounds, proved in Appendices~\ref{AppSparseLocUpper} and~\ref{AppSparseLocLower}:

\begin{theorem}
\label{SparseLocUpper}
Assume the true distribution is $P_{\thetastar} = \mathcal{N}(\thetastar, I_p)$, where $\thetastar$ lies in the parameter space~\eqref{space:spsGauss}. Suppose the contaminated distribution $P$ satisfies $W(P,P_{\thetastar}) \leq \epsilon$, and $X_i \stackrel{i.i.d.}{\sim} P$. Consider the function class $\mathcal{D}=\mathcal{D}_{sp}(B, L, k)$. Assume $2^L\cdot \frac{k\log\frac{p}{k}}{n} + \epsilon^2 \leq c$ for some sufficiently small constant $c$. Then the estimator $\widehat{\theta}$ from the W-GAN model~\eqref{model} satisfies
$$ \|\widehat{\theta} - \thetastar\|_2 \lesssim  2^L \sqrt{\frac{k\log\frac{p}{k}}{n}} \vee \epsilon, $$
with probability at least $1 - e^{-(p+n\epsilon^2)}$, and the constant prefactor only depends on $B$.
\end{theorem}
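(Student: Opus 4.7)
The plan is to closely parallel the proof of Theorem~\ref{LocUpper}, substituting Theorem~\ref{Upper2} for Theorem~\ref{Upper1} and carefully handling the $k$-sparsity constraint on the weights of the witness functions. An appropriate choice of $\delta$ in Theorem~\ref{Upper2} (so that the $\sqrt{\log(1/\delta)/n}$ deviation term is of the same order as the main term) yields, with at least the probability stated in the theorem,
\begin{equation*}
\sup_{f \in \mathcal{D}_{sp}(B,L,k)} \bigl| \E_{P_{\thetastar}} f(X) - \E_{P_{\widehat{\theta}}} f(X) \bigr| \;\lesssim\; 2^L \sqrt{\tfrac{k \log(p/k)}{n}} \vee \epsilon,
\end{equation*}
with the prefactor depending only on $B$. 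The rest of the proof converts this supremum bound into a bound on $\|\widehat{\theta} - \thetastar\|_2$ via a concrete choice of test function.

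The key observation is that, exactly as in the dense case, $\mathcal{D}_{sp}(B,L,k)$ contains every single-sigmoid function of the form $x \mapsto \pm\,\sigma(w^T x + b)$ with $\|w\|_2 \le B$, $\|w\|_0 \le k$, and $b \in \real$: such a function lies in $\tilde{\mathcal{D}}^{(1)}$, propagates unchanged through subsequent hidden layers by choosing a single input weight of $1$ (using that $\sigma \ge 0$ so ReLU acts as the identity), and the sign is realized via the output layer. For $X \sim \mathcal{N}(\mu, I_p)$ one has $u^T X \sim \mathcal{N}(u^T \mu, 1)$, so $\E\,\sigma(B u^T X + b) = \phi(B u^T \mu + b)$, where $\phi(t) := \E_{Z \sim \mathcal{N}(0,1)}\,\sigma(t + B Z)$ is smooth, strictly increasing, and has derivative bounded below by some constant $c'(B) > 0$ on every bounded neighborhood of the origin.

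Since $\thetastar$ and $\widehat{\theta}$ are both $k$-sparse (the latter because the argmin in~\eqref{model} is taken over $\Theta$ in the sparse setting), the difference $v := \widehat{\theta} - \thetastar$ is supported on some set $S$ with $|S| \le 2k$. I would split $S = S_1 \sqcup S_2$ with $|S_i| \le k$, decompose $v = v^{(1)} + v^{(2)}$ with $v^{(i)}$ supported on $S_i$, set $u_i := v^{(i)}/\|v^{(i)}\|_2$ (a $k$-sparse unit vector), and choose $b_i := -B(u_i^T \thetastar + u_i^T \widehat{\theta})/2$ so that the sigmoid argument lies symmetrically in $[-B\|v^{(i)}\|_2/2,\,B\|v^{(i)}\|_2/2]$. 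Since $u_i^T v = \|v^{(i)}\|_2$, the mean value theorem applied to $\phi$ gives
\begin{equation*}
\bigl| \E_{P_{\thetastar}}\,\sigma(B u_i^T X + b_i) - \E_{P_{\widehat{\theta}}}\,\sigma(B u_i^T X + b_i) \bigr| \;=\; \phi'(\xi_i)\, B\, \|v^{(i)}\|_2
\end{equation*}
for some $\xi_i$ with $|\xi_i| \le B\|v^{(i)}\|_2/2$. The smallness hypothesis $2^L \cdot k\log(p/k)/n + \epsilon^2 \le c$, combined with an a priori consistency step, keeps $\xi_i$ in a bounded interval so that $\phi'(\xi_i) \ge c'(B)/2$. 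Combining this lower bound with the supremum estimate, summing over $i = 1,2$, and using $\|v\|_2 \le \|v^{(1)}\|_2 + \|v^{(2)}\|_2$, yields the advertised conclusion.

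The main obstacle is the sparsity mismatch between the natural witness direction $v/\|v\|_2$ (which is $2k$-sparse) and the $k$-sparse weights allowed by $\mathcal{D}_{sp}(B,L,k)$; the two-piece decomposition above resolves this at the cost of a harmless factor of $2$ (and of the cosmetic change $\log(p/k) \leftrightarrow \log(p/(2k))$ in the rate). A secondary technical point is the a priori control on $\|v\|_2$ needed to keep $\phi'(\xi_i)$ bounded away from $0$, which is precisely the role played by the assumption $2^L \cdot k\log(p/k)/n + \epsilon^2 \le c$: a standard continuity/bootstrapping argument rules out the large-$\|v\|_2$ regime.
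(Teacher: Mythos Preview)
Your proposal is correct and follows essentially the same route as the paper: specialize to single-sigmoid witnesses $f(x)=\sigma(u^Tx+b)$, invoke Theorem~\ref{Upper2} in place of Theorem~\ref{Upper1}, and invert the monotone function $g$ (your $\phi$) to pass from the supremum bound to a bound on $\|\widehat\theta-\thetastar\|_2$.

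The one substantive difference is that you explicitly confront the mismatch between the $k$-sparse first-layer weights in $\mathcal{D}_{sp}(B,L,k)$ and the (at most) $2k$-sparse direction $\widehat\theta-\thetastar$, resolving it via a two-block decomposition $v=v^{(1)}+v^{(2)}$ with $k$-sparse pieces. The paper's proof simply says ``this setup of weight parameters satisfies the sparsity constraint'' and reuses the identity $\|\widehat\theta-\thetastar\|_2=\tfrac1B\sup_{\|u\|_2=B}|u^T(\widehat\theta-\thetastar)|$ from the dense case; strictly speaking, with the sup restricted to $k$-sparse $u$ this only yields the largest $k$-sparse projection, not the full $\ell_2$-norm. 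Your decomposition (or the equivalent observation that for $2k$-sparse $v$, $\sup_{\|u\|_0\le k,\|u\|_2=1}|u^Tv|\ge \|v\|_2/\sqrt2$) supplies the missing constant-factor step. Your choice of centered offset $b_i$ versus the paper's $b=-u^T\thetastar$ is cosmetic; both keep the argument of $g$ near the origin under the smallness assumption.
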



\begin{remark}
Note that when $k \geq \frac{p}{2}$, we need to use the normal first layer \eqref{func1} in our neural network, leading to the same upper bound as in Theorem~\ref{LocUpper}.
\end{remark}

\begin{theorem}
\label{SparseLocLower}
Assume the true distribution is $P_{\thetastar} = \mathcal{N}(\thetastar, I_p)$, where $\thetastar$ lies in the parameter space~\eqref{space:spsGauss}. Suppose the contaminated distribution $P$ satisfies $W(P,P_{\thetastar}) \leq \epsilon$, and $X_i \stackrel{i.i.d.}{\sim} P$. Then there exist constants $C, c>0$ such that
$$ \inf_{\widehat{\theta}} \sup_{\thetastar \in \Theta, P:W(P_{\thetastar},P)\leq \epsilon} \mathbb{P}\left(\|\thetastar - \widehat{\theta}\|_2 \geq C\left(\sqrt{\frac{k\log\frac{p}{k}}{n}} \vee \epsilon\right)\right) \geq c. $$
\end{theorem}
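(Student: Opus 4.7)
The strategy is to apply the general modulus-of-continuity lower bound (Theorem~\ref{ThmModulus}) to the sparse parameter space $\Theta = \{\theta \in \mathbb{R}^p : \|\theta\|_0 \le k\}$. This requires two ingredients: (i) a bound $\mathcal{M}(0)$ on the uncontaminated minimax risk for sparse Gaussian mean estimation, and (ii) a lower bound on the sparsity-restricted modulus of continuity $m(\epsilon,\Theta)$.

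For ingredient (i), when $\epsilon = 0$ the model reduces to the classical sparse Gaussian sequence problem, whose $\ell_2$ minimax rate is well known to be of order $\sqrt{k \log(p/k)/n}$. I would derive the probabilistic version using Fano's inequality: construct a packing $\{\theta^{(j)}\}_{j=1}^M \subset \Theta$ via the Gilbert--Varshamov bound on binary codewords indexing $k$-element supports in $[p]$, scale each nonzero coordinate so that pairwise $\ell_2$ separations are of order $\sqrt{k \log(p/k)/n}$, and verify that the log-cardinality $\log M \asymp k \log(p/k)$ dominates the pairwise KL divergence $n\|\theta^{(i)}-\theta^{(j)}\|_2^2/2$ up to the required constant. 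Fano then yields a constant lower bound on the probability of exceeding this separation, giving $\mathcal{M}(0) \asymp \sqrt{k\log(p/k)/n}$.

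For ingredient (ii), I use the fact that for Gaussians with a common covariance, Kantorovich duality together with the translation coupling gives the closed-form identity $W(\mathcal{N}(\theta_1, I_p), \mathcal{N}(\theta_2, I_p)) = \|\theta_1 - \theta_2\|_2$: the upper bound comes from the coupling $Y = X + (\theta_2 - \theta_1)$, and the matching lower bound from testing $f(x) = \langle x, \theta_1 - \theta_2\rangle / \|\theta_1 - \theta_2\|_2$, which is $1$-Lipschitz. Taking $\theta_1 = 0$ and $\theta_2 = \epsilon e_1$, which are both $k$-sparse since $k \ge 1$, yields $m(\epsilon, \Theta) \ge \epsilon$. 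Combining (i) and (ii) via Theorem~\ref{ThmModulus} gives $\mathcal{M}(\epsilon) \asymp \sqrt{k\log(p/k)/n} \vee \epsilon$, which is exactly the claim.

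The main technical obstacle is ingredient (i), the uncontaminated sparse minimax lower bound; everything else is essentially a one-line computation. This step is standard in the literature (for instance in the proofs of sparse estimation lower bounds in the Gaussian sequence model), so I would either cite the relevant result or include a brief self-contained Fano argument using a Varshamov--Gilbert packing. A minor subtlety worth checking is that the packing construction is compatible with the $\|\theta\|_0 \le k$ constraint, which is automatic because each codeword in the construction is supported on exactly $k$ coordinates.
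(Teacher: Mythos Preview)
Your proposal is correct and follows essentially the same approach as the paper: both apply Theorem~\ref{ThmModulus}, bound the modulus $m(\epsilon,\Theta)\ge\epsilon$ via the translation coupling between two $k$-sparse Gaussian means, and obtain $\mathcal{M}(0)\asymp\sqrt{k\log(p/k)/n}$ via a Fano argument using a Varshamov--Gilbert/sparse-ball packing (the paper cites Example~15.16 of \cite{Wai19} for this packing). Your treatment of the modulus is in fact slightly more thorough, since you also note the matching dual lower bound $W(\mathcal{N}(\theta_1,I),\mathcal{N}(\theta_2,I))\ge\|\theta_1-\theta_2\|_2$, whereas the paper only uses the coupling upper bound.
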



Comparing the conclusions of Theorems~\ref{SparseLocUpper} and~\ref{SparseLocLower}, we can see that the bounds again match each other, showing the optimality of the W-GAN-based estimator. 


\subsection{Elliptical distributions}
\label{SecElliptical}

In location estimation for elliptical distributions \citep{srivastava1989stein}, the true distribution is an elliptical distribution. We use the same setup as in \cite{Gao1}. The stochastic representation is given by $X = \thetastar + \xi A^* U$,
where $\thetastar \in \mathbb{R}^p$ is the location parameter, $\xi$ is a random variable controlling the shape, $U \in \mathbb{R}^p$ is uniformly distributed on the unit sphere, and $A^* \in \mathbb{R}^{p \times p}$ is a deterministic matrix.

Note that for any unit vector $v\in \mathbb{R}^p$, the distribution of variable $S = v^T\xi U$ is symmetric and does not depend on $v$. We denote the density function by $h^*(s)$. Since $\xi A^* = (c\xi)(\frac{1}{c}A^*)$ for any $c\neq 0$, the parameters $(\xi, A^*)$ can only be identified up to a constant. To make the parameters identifiable, we normalize $\xi$ so that the density of $S$ satisfies
\begin{equation}
\label{EqnRestrict}
\int \sigma'(s)h^*(s) ds = 1.
\end{equation}
Finally, we denote the corresponding elliptical distribution by $\mathcal{E}(\thetastar, h^*, A^*)$.

\begin{theorem}
\label{EllipLoc}
Assume the true distribution is $P_{\etastar} =  \mathcal{E}(\thetastar, h^*, A^*)$, where $\thetastar$ lies in the parameter space $\Theta=\real^p$, and $\lambda_{\min}(A^*A^{*T}) \geq M_1$ and $\lambda_{\min}(A^*A^{*T}) \leq M_2$. Suppose the contaminated distribution $P$ satisfies $W(P,P_{\etastar}) \leq \epsilon$, and $X_i \stackrel{i.i.d.}{\sim} P$. Consider the function class $\mathcal{D}=\mathcal{D}(B, L)$, with $B = \frac{1}{M_1}$. Assume $2^L\cdot \frac{p}{n} + \epsilon^2 \leq c$ for some sufficiently small constant $c$. Then the location estimator $\widehat{\theta}$ from the W-GAN model~\eqref{model} satisfies
$$ \|\widehat{\theta} - \thetastar\|_2 \lesssim  2^L \sqrt{\frac{p}{n}} \vee \epsilon, $$
with probability at least $1 - e^{-(p+n\epsilon^2)}$, where the constant prefactor depends on $B$ and $M_2$.
\end{theorem}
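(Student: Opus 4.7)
The plan is to follow the roadmap of Theorem~\ref{LocUpper}: invoke Theorem~\ref{Upper1} for uniform control of the W-GAN discrepancy over $\mathcal{D}(B,L)$, and then lower bound that sup by evaluating it on one carefully chosen sigmoidal test function aligned with the direction of the location error. The only genuinely new ingredient relative to the Gaussian case is that the required linear lower bound on $\E_{P_{\thetastar}}f - \E_{P_{\widehat{\theta}}}f$ is read off from the elliptical stochastic representation together with the normalization identity~\eqref{EqnRestrict}, in place of direct Gaussian integration.

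Concretely, I would set $r := \|\widehat{\theta} - \thetastar\|_2$, $v := (\widehat{\theta} - \thetastar)/r$, $\tilde a := \|A^{*T} v\|_2$, and $t := 1/\tilde a$, and consider the test function $f(x) := \sigma\bigl(t v^{T}(x - \widehat{\theta})\bigr)$. The hypothesis $\lambda_{\min}(A^{*} A^{*T}) \ge M_1$ gives $\tilde a \ge \sqrt{M_1}$, so $\|tv\|_2 \le 1/\sqrt{M_1} \le B$, whence $f \in \mathcal{D}(B,L)$ for every $L \ge 1$ (for $L=1$ by definition, and for $L \ge 2$ by realizing $f$ as a depth-$L$ network that places weight $1$ on a single first-layer sigmoid node and exploits $\sigma \ge 0$ so that each interior ReLU acts as the identity, with $w_1 = 1$ in the final linear layer). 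Plugging in the stochastic representations $X = \thetastar + \xi A^{*} U \sim P_{\thetastar}$ and $X' = \widehat{\theta} + \xi A^{*} U \sim P_{\widehat{\theta}}$ and using $t\tilde a = 1$,
\[
\E_{P_{\thetastar}} f(X) - \E_{P_{\widehat{\theta}}} f(X') \;=\; \E_S\bigl[\sigma(S - tr) - \sigma(S)\bigr],
\]
where $S$ has the symmetric density $h^{*}$.

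A second-order Taylor expansion $\sigma(S - tr) - \sigma(S) = -tr\,\sigma'(S) + \tfrac{1}{2}(tr)^2 \sigma''(\zeta)$, together with the absolute bound $\|\sigma''\|_\infty \le C_0$ and the normalization $\int \sigma'(s)\, h^{*}(s)\,ds = 1$ from~\eqref{EqnRestrict}, then yields
\[
\bigl|\E_{P_{\thetastar}} f - \E_{P_{\widehat{\theta}}} f\bigr| \;\ge\; tr - C_0 (tr)^2 \;\ge\; \tfrac{1}{2} tr \;\ge\; \frac{r}{2\sqrt{M_2}},
\]
provided $r$ is below an absolute constant multiple of $\sqrt{M_1}$. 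Combining with Theorem~\ref{Upper1} applied at $\delta = e^{-(p + n\epsilon^{2})}$, for which $\sqrt{\log(1/\delta)/n} \le \sqrt{p/n} + \epsilon$, transforms this into $r \lesssim \sqrt{M_2}\bigl(2^{L}\sqrt{p/n} + B\epsilon\bigr)$ on the same high-probability event, which is exactly the stated rate.

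The main obstacle is certifying that the quadratic remainder in the Taylor step is genuinely dominated by the linear term, i.e.\ that $r$ is below the threshold of order $\sqrt{M_1}$. This is settled by a standard bootstrap/contradiction argument exploiting the standing assumption $2^{L} p/n + \epsilon^{2} \le c$: a crude preliminary bound (already available from the boundedness and Lipschitz properties of sigmoidal test functions) confines $\widehat{\theta}$ to a ball of bounded radius about $\thetastar$, after which the refined second-order analysis upgrades the crude bound to the sharp rate. A minor bookkeeping remark is that the exact value of the normalization constant in~\eqref{EqnRestrict} enters the final prefactor only multiplicatively and is absorbed into the dependence on $B$ and $M_2$.
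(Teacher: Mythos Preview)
Your overall architecture mirrors the paper's, and the Taylor-expansion substitute for the monotonicity-of-$g$ argument is a legitimate variant. However, there is a genuine gap in how you model the fitted distribution. In the elliptical setting the W-GAN generator learns the full triple $(\widehat{\theta},\widehat{h},\widehat{A})$, not $\widehat{\theta}$ alone (see the remark following the theorem and the paper's proof, which writes $\E_{\widehat{\theta},\widehat{h},\widehat{A}}$ throughout). Your line ``$X' = \widehat{\theta} + \xi A^{*}U \sim P_{\widehat{\theta}}$'' is therefore incorrect: the fitted law is $\mathcal{E}(\widehat{\theta},\widehat{h},\widehat{A})$ with stochastic representation $X' = \widehat{\theta} + \widehat{\xi}\,\widehat{A}\,U$, so $f(X') = \sigma\bigl(t\,v^{T}\widehat{\xi}\,\widehat{A}\,U\bigr)$ involves $\widehat{h}$ and $\widehat{A}$, not $h^{*}$ and $A^{*}$.

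Your displayed identity $\E_{P_{\thetastar}}f - \E_{P_{\widehat{\theta}}}f = \E_{S}[\sigma(S - tr) - \sigma(S)]$ does survive, but for a reason you never invoke: for \emph{any} centered elliptical law and any linear form $\ell$, symmetry of $\ell(X')$ together with $\sigma(s)+\sigma(-s)=1$ gives $\E\,\sigma(\ell(X')) = \tfrac12$, independently of $(\widehat{h},\widehat{A})$; and likewise $\E\,\sigma(S)=\tfrac12$. The paper's proof uses precisely this cancellation to eliminate the nuisance parameters before invoking the normalization~\eqref{EqnRestrict}. This is the ``more work'' flagged in the remark after the theorem, and without it your derivation of the key identity is unjustified. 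Once you insert this symmetry step, the rest of your argument goes through.
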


The proof of Theorem~\ref{EllipLoc} is contained in Appendix~\ref{AppEllipLoc}. Note that the conclusion is of the same order as the upper bound for Gaussian location estimation in Theorem~\ref{LocUpper}.

\begin{remark}
Note that the result in Theorem~\ref{EllipLoc} is not directly implied by Remark~\ref{rmkLoc1}, since the elliptical distributions considered in this section are parametrized by $(\theta^*, h^*, A^*)$ rather than a single location parameter. Thus, we need to do some more work before using the arguments suggested by Remark~\ref{rmkLoc1}.
\end{remark}

\begin{remark}
Note the generator structure for elliptical distributions is more complex than in the Gaussian case, due to the additional parameters $h^*$ and $A^*$. In practice, we typically need to assume a parametric form of the distribution of $\xi$, i.e., $\xi = \xi(\phi)$. Then our generator contains learnable parameters corresponding to $\theta$, $A$, and $\phi$, and outputs $\widehat{\theta}+\xi\widehat{A}U$. We estimate the parameters simultaneously.
\end{remark}


\section{Covariance matrix estimation}
\label{sec:cov}

In this section, we consider covariance matrix estimation problems. We provide upper and lower bounds for the W-GAN based-estimator and show that the estimator achieves the minimax risk under Gaussian covariance matrix estimation. We also consider covariance matrix estimation with banded and sparse structures. Finally, we consider covariance matrix estimation for elliptical distributions.


\subsection{Gaussian distributions}
\label{SecCovGauss}

In Gaussian covariance matrix estimation~\citep{covlower1}, the true data generating distribution is a Gaussian distribution with known mean and unknown covariance. We would like to estimate the covariance matrix using i.i.d.\ samples from the distribution. Under the Wasserstein contamination model, we assume the true distribution is $P_\Sigma = \mathcal{N}(0,\Sigma)$, where $\Sigma$ belongs to the parameter space 
\begin{equation}
\Theta = \{\Sigma \succeq 0 : \lambda_{\max}(\Sigma) \leq M_2,  \lambda_{\min}(\Sigma) \geq M_1\}. \label{space:covGauss}
\end{equation}

We first provide an upper bound for our W-GAN-based estimator:

\begin{theorem}
\label{CovUpper}
Suppose $P_{\Sigmastar} = \mathcal{N}(0, \Sigmastar)$, where $\Sigmastar$ lies in the parameter space~\eqref{space:covGauss}. Suppose the contaminated distribution $P$ satisfies $W(P,P_{\Sigmastar}) \leq \epsilon$, and $X_i \stackrel{i.i.d.}{\sim} P$. Consider the function class $\mathcal{D}=\mathcal{D}(B, L)$, with $B=\frac{1}{M_1}$. Assume $2^L\cdot \frac{p}{n} + \epsilon^2 \leq c$ for some sufficiently small constant $c$. Then the estimator $\widehat{\Sigma}$ from the W-GAN model~\eqref{model} satisfies
$$\|\widehat{\Sigma}-\Sigmastar\|_2 \lesssim 2^L \sqrt{\frac{p}{n}} \vee \epsilon,$$
with probability at least $1 - e^{-(p+n\epsilon^2)}$, where the constant prefactor depends on $M_1$ and $M_2$.
\end{theorem}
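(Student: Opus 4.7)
The plan is to combine the uniform deviation bound in Theorem~\ref{Upper1} with a carefully chosen family of discriminators $f \in \mathcal{D}(B,L)$ that converts the integral probability metric induced by $\mathcal{D}(B,L)$ into the spectral distance between covariance matrices. The first step is to apply Theorem~\ref{Upper1} with $\delta = e^{-(p+n\epsilon^2)}$, which gives $\sqrt{2\log(1/\delta)/n} \lesssim \sqrt{p/n} + \epsilon$, so that on an event of the required probability,
\[
\sup_{f \in \mathcal{D}(B,L)} \bigl|\E_{P_{\Sigmastar}} f(X) - \E_{P_{\widehat{\Sigma}}} f(X)\bigr| \;\lesssim\; 2^L \sqrt{\frac{p}{n}} + \epsilon,
\]
with implicit constants depending only on $B = 1/M_1$.

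Next I would introduce, for each unit vector $u \in \real^p$ and each scalar $b$, the one-sigmoid discriminator $f_{u,b}(x) = \sigma(B u^T x + b)$. Because $\sigma$ takes values in $(0,1)$, every ReLU hidden layer acts as the identity on it, so choosing unit weights throughout the hidden and output layers shows that $f_{u,b} \in \mathcal{D}(B,L)$. For $X \sim \mathcal{N}(0,\Sigma)$ one has $Bu^T X \sim \mathcal{N}(0,\,B^2 u^T \Sigma u)$, hence $\E_{\mathcal{N}(0,\Sigma)}[f_{u,b}(X)] = G_b(B^2 u^T \Sigma u)$, where $G_b(t) := \E_{Y \sim \mathcal{N}(0,t)} [\sigma(Y+b)]$. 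Integration by parts (equivalently, the heat equation $\partial_t \phi_t = \tfrac{1}{2} \partial_y^2 \phi_t$ for the Gaussian density) yields $G_b'(t) = \tfrac{1}{2}\,\E_{Y \sim \mathcal{N}(0,t)}[\sigma''(Y+b)]$.

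Since the W-GAN optimization is performed over $\Theta$, both $u^T \Sigmastar u$ and $u^T \widehat{\Sigma} u$ lie in $[M_1,M_2]$, so $t$ ranges over the compact interval $I = [B^2 M_1, B^2 M_2]$. The next step is to choose $b$, depending only on $M_1, M_2, B$, so that $|G_b'(t)| \ge c > 0$ uniformly over $I$. Granting this, the mean value theorem will give
\[
\bigl|\E_{P_{\Sigmastar}} f_{u,b}(X) - \E_{P_{\widehat{\Sigma}}} f_{u,b}(X)\bigr| \;\ge\; c B^2 \bigl|u^T(\Sigmastar - \widehat{\Sigma}) u\bigr|.
\]
Taking the supremum over unit $u$, the left side is dominated by $\sup_{f \in \mathcal{D}(B,L)}|\E_{P_{\Sigmastar}} f - \E_{P_{\widehat{\Sigma}}} f|$, and the right side equals $\|\Sigmastar - \widehat{\Sigma}\|_2$ since $\Sigmastar - \widehat{\Sigma}$ is symmetric. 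Dividing by $c B^2 = c M_1^2$ and combining with the bound from the first step will yield the claimed rate.

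The main obstacle will be the quantitative lower bound $|G_b'(t)| \ge c$ uniformly on $I$. Since $\sigma''(u) = \sigma'(u)(1-2\sigma(u))$ has definite sign on each half-line but vanishes at the origin, $\E[\sigma''(Y+b)]$ could \emph{a priori} change sign or shrink as $t$ varies. The remedy is to take $|b|$ large compared to the maximum standard deviation $B\sqrt{M_2}$, which pushes $Y+b$ to one side of zero with high probability uniformly over $t \in I$ and forces $\sigma''(Y+b)$ to keep a constant sign; a Gaussian tail estimate on $Y$ then produces a uniform lower bound on $|\E[\sigma''(Y+b)]|$ depending only on $M_1, M_2, B$. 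As a backup, $f_{u,b}$ can be replaced by a signed pair $\tfrac{1}{2}(f_{u,b_1} - f_{u,b_2})$, which remains in $\mathcal{D}(B,L)$ through the unconstrained-sign output layer, to rule out accidental cancellation in the derivative.
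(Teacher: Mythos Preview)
Your proposal is correct and shares the paper's overall strategy: apply Theorem~\ref{Upper1} with $\delta = e^{-(p+n\epsilon^2)}$, then insert a single-sigmoid discriminator (propagated through identity higher layers) to convert the integral probability metric into control of $|u^T(\widehat{\Sigma}-\Sigmastar)u|$ for every unit $u$.

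The technical execution differs in the parametrization of the resulting one-dimensional problem. The paper takes $w = u/\sqrt{u^T\Sigmastar u}$ and $b=1$, so that under $P_{\Sigmastar}$ the sigmoid argument is $Z+1$ with $Z$ standard normal, and under $P_{\widehat{\Sigma}}$ it is $\Delta Z + 1$ with $\Delta = \sqrt{u^T\widehat{\Sigma}u/u^T\Sigmastar u}$; it then studies $g(t)=\E\sigma(tZ+1)$, shows $g'(t)<0$ for all $t>0$ by a direct pairing argument on $z\mapsto \sigma'(tz+1)z\phi(z)$, and inverts locally near $t=1$ (no a priori bound on $\widehat{\Sigma}$ is needed, only the smallness assumption). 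Your version fixes $w=Bu$ and parametrizes by the variance $t=B^2 u^T\Sigma u$, which requires a \emph{uniform} lower bound on $|G_b'(t)|$ over the compact interval $[B^2M_1,B^2M_2]$ and hence uses $\widehat{\Sigma}\in\Theta$. That uniform bound is available---the same pairing argument shows $G_b'(t)\neq 0$ on $(0,\infty)$ for any fixed $b\neq 0$, and compactness does the rest---but your proposed mechanism of taking $|b|$ large is more delicate than necessary, since large $|b|$ drives $\sigma''$ into its exponentially decaying tail and degrades the constant; the simple choice $b=1$ already works, and the signed-pair backup is not needed. Your route has the minor advantage of going directly from $G_b$ to $|u^T(\widehat{\Sigma}-\Sigmastar)u|$, whereas the paper passes through $|\Delta-1|\to|\Delta^2-1|\to\|\Sigma^{*-1/2}(\widehat{\Sigma}-\Sigmastar)\Sigma^{*-1/2}\|_2$ before reaching the spectral norm.
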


The proof of Theorem~\ref{CovUpper} is contained in Appendix~\ref{AppCovUpper}.

We then provide a lower bound for Gaussian covariance matrix estimation under the Wasserstein contamination model:

\begin{theorem}
\label{CovLower}
Assume the true distribution is $P_{\Sigmastar} = \mathcal{N}(0, \Sigmastar)$, where $\Sigmastar$ lies in the parameter space~\eqref{space:covGauss}. Suppose the contaminated distribution $P$ satisfies $W(P,P_{\Sigmastar}) \leq \epsilon$, and $X_i \stackrel{i.i.d.}{\sim} P$. Then there exist constants $C, c>0$ such that
$$ \inf_{\widehat{\Sigma}} \sup_{\Sigmastar \in \Theta, P:W(P_{\Sigmastar}, P)\leq \epsilon} \mathbb{P}\left(\|\Sigmastar - \widehat{\Sigma}\|_2 \geq C\left(\sqrt{\frac{p}{n}} \vee \epsilon\right)\right) \geq c. $$
\end{theorem}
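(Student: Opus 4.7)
The plan is to derive Theorem~\ref{CovLower} as an application of the general modulus-of-continuity lower bound in Theorem~\ref{ThmModulus}, applied with loss $L(\Sigma_1, \Sigma_2) = \|\Sigma_1 - \Sigma_2\|_2$. The spectral norm satisfies the triangle inequality, so Theorem~\ref{ThmModulus} is applicable, and it suffices to verify two pieces: (i) a probabilistic minimax lower bound $\mathcal{M}(0) \gtrsim \sqrt{p/n}$ in the uncontaminated Gaussian case, and (ii) a modulus-of-continuity bound $m(\epsilon, \Theta) \gtrsim \epsilon$ for the Wasserstein-1 distance on $\Theta$. Taking the maximum then gives the claimed $\sqrt{p/n} \vee \epsilon$ rate.

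For step (i), I would invoke the classical minimax lower bound for Gaussian covariance estimation in spectral norm, which is $\Omega(\sqrt{p/n})$ and is attained e.g.\ via a Varshamov-Gilbert packing of rank-one perturbations $\{c I_p + \tau v_j v_j^T\}$ (with $\tau \asymp \sqrt{p/n}$) inside $\Theta$, combined with Fano's inequality or Assouad's lemma. The standard argument gives an expected-loss lower bound; converting to the probabilistic form required by Theorem~\ref{ThmModulus} is routine (Markov's inequality applied to the testing error in Le Cam's two-point construction gives constant probability directly).

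For step (ii), I would exhibit two covariances $\Sigma_1, \Sigma_2 \in \Theta$ with $W(P_{\Sigma_1}, P_{\Sigma_2}) \le \epsilon$ but $\|\Sigma_1 - \Sigma_2\|_2 \gtrsim \epsilon$. The natural choice is $\Sigma_1 = c I_p$ and $\Sigma_2 = c I_p + t\, e_1 e_1^T$ for constants $c \in (M_1, M_2)$ and parameter $t$ to be chosen. Since these matrices commute, the Bures--Wasserstein formula gives
\[
W_2^2(P_{\Sigma_1}, P_{\Sigma_2}) = \operatorname{tr}\!\left((\Sigma_1^{1/2} - \Sigma_2^{1/2})^2\right) = (\sqrt{c+t} - \sqrt{c})^2 \asymp t^2,
\]
for $t$ small relative to $c$. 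Using $W \le W_2$, setting $t \asymp \epsilon$ yields $W(P_{\Sigma_1}, P_{\Sigma_2}) \lesssim \epsilon$ while $\|\Sigma_1 - \Sigma_2\|_2 = t \asymp \epsilon$, giving $m(\epsilon, \Theta) \gtrsim \epsilon$. One only needs to check that the chosen $c$ and sufficiently small $t$ keep both matrices in $\Theta$, which is immediate for any $c$ strictly between $M_1$ and $M_2$.

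The main obstacle is the modulus calculation: it is tempting to try a more aggressive rank-one perturbation $\Sigma_2 = \Sigma_1 + t e_1 e_1^T$ with $t$ as large as possible for given Wasserstein budget, but one must actually control $W_1$ (not just $W_2$), and $W_1$ for two Gaussians does not have a clean closed form. Using the bound $W_1 \le W_2$ is clean but might be loose; however, since the resulting lower bound already matches the upper bound in Theorem~\ref{CovUpper}, this suboptimality-free approach suffices. A secondary technical point is ensuring the $\epsilon=0$ bound is stated in the probabilistic form required by Theorem~\ref{ThmModulus}, which the cited classical arguments deliver with essentially no change.
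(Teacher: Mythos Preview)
Your proposal is correct and follows essentially the same route as the paper: apply Theorem~\ref{ThmModulus} with the spectral-norm loss, bound the modulus via a one-coordinate diagonal perturbation (the paper takes $U=I_p$, $V_{11}=(1+\epsilon)^2$, computes $W_2$ by the Bures formula, and uses $W_1\le W_2$), and obtain $\mathcal{M}(0)\asymp\sqrt{p/n}$ from a standard Fano-type construction (the paper cites Ma's rank-one perturbation argument rather than spelling out the packing). The only cosmetic difference is that the paper fixes $c=1$ and chooses $t$ so that $W_2$ equals $\epsilon$ exactly, whereas you leave $c\in(M_1,M_2)$ free and work up to constants; both are fine.
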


The proof of Theorem~\ref{CovLower} is contained in Appendix~\ref{AppCovLower}.

\begin{remark}
Comparing the bounds of Theorem~\ref{CovUpper} and~\ref{CovLower}, we see that the W-GAN-based estimator is minimax optimal. Under Huber's contamination model, the minimax rate is also $\Theta\left(\frac{p}{n} \vee \epsilon^2\right)$ and can be achieved using a depth-based estimator~\citep{chen_gao_ren_2018}.
\end{remark}


\subsection{Covariance matrices with special structure}
\label{SecCovSpecial}

We now consider the problem of Gaussian covariance matrix estimation when the true covariance matrix is known to have a certain structure \citep{collins_wiens_1985, cai2012optimal}. We consider two classes of covariance matrices, which are studied in \cite{chen_gao_ren_2018}: (i) banded matrices with bandwidth $k$, and (ii) sparse matrices containing a $k \times k$ nonzero submatrix with diagonal elements. More specifically, denote
$$ \mathcal{F}_1(k) = \{\Sigma = (\sigma_{ij}) \succeq 0: \sigma_{ij}=0 \text{ if } |i-j|>k\}$$
and define
\begin{align*}
\mathcal{G}(k) & = \cup_{|I|\leq k}\{G=(g_{ij}): g_{ij} = 0 \text{ if } i \notin I \text{ or } j \notin I \}, \\
\mathcal{F}_2(k) & = \{\Sigma \succeq 0: \Sigma - \text{diag}(\Sigma) \in \mathcal{G}(k) \}.
\end{align*}

The corresponding parameter spaces we consider are defined by
\begin{align}
\Theta_1(k) & = \left\{\Sigma \in \mathbb{R}^{p\times p}: \Sigma \in \mathcal{F}_1(k), M_1 \leq \|\Sigma\|_2 \leq M_2, k<\frac{p}{2}\right\}, \label{space:covSparse1} \\
\Theta_2(k) & = \left\{\Sigma \in \mathbb{R}^{p\times p}: \Sigma \in \mathcal{F}_2(k), M_1 \leq \|\Sigma\|_2 \leq M_2, k<\frac{p}{2}\right\}. \label{space:covSparse2}
\end{align}

We first consider an upper bound for the W-GAN-based estimator. Although Theorem~\ref{CovUpper} provides a valid upper bound, we may obtain a tighter upper bound using different neural network estimators that reflect the known structure of the parameter space. Define 
$$\mathcal{U}_1(B, 2k) = \bigcup_{l=1}^{p+1-2k}\{u = (u_i): \|u\|_2 \leq B, u_i = 0 \text{ if } i\notin [l, l+2k-1]\}, $$
so $\mathcal{U}_1(B, 2k)$ is the set of the vectors with $2k$ contiguous nonzero elements and 2-norm bounded by $B$. 
We define the corresponding new first layer
\begin{align}
\mathcal{D}^{(1)} & = \{f(x)=\sigma(w^T x + b): w\in \mathbb{R}^p, w\in \mathcal{U}_1(B, 2k), b\in \mathbb{R}\}. \label{func6} 
%
\end{align}
The remaining layers are defined as before, leading to a function class which we call $\mathcal{D}_1(B,L,2k)$. We have the following bounds for the resulting W-GAN-based estimators:

\begin{theorem}
\label{SparseCovUpper1}
Assume the true distribution is $P_{\Sigmastar} = \mathcal{N}(0, \Sigmastar)$, where $\Sigmastar$ lies in the parameter space~\eqref{space:covSparse1}. Suppose the contaminated distribution $P$ satisfies $W(P,P_{\Sigmastar}) \leq \epsilon$, and $X_i \stackrel{i.i.d.}{\sim} P$. Consider the function class $\mathcal{D}=\mathcal{D}_1(B, L, 2k)$, with $B=\frac{1}{M_1}$. Assume $2^L\cdot \frac{\max(2k, \log p + 2k\log(\log{p}/(2k))}{n} + \epsilon^2 \leq c$ for some sufficiently small constant $c$. Then the estimator $\widehat{\Sigma}$ from the W-GAN model~\eqref{model} satisfies
$$\|\widehat{\Sigma}-\Sigmastar \|_2 \lesssim 2^L \sqrt{\frac{\max(2k, \log p + 2k\log(\log{p}/(2k))}{n}} \vee\epsilon,$$
with probability at least $1 - e^{-(p+n\epsilon^2)}$, and the constant only depends on $M_1$ and $M_2$.
\end{theorem}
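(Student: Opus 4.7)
The plan is to mirror the proof of Theorem~\ref{CovUpper}, replacing the full function class $\mathcal{D}(B,L)$ by the structured class $\mathcal{D}_1(B,L,2k)$ and exploiting the banded form of $\widehat{\Sigma}-\Sigmastar$.

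First, I would establish a concentration bound analogous to Theorem~\ref{Upper1} for the class $\mathcal{D}_1(B,L,2k)$. The only step that changes is the Rademacher complexity of the first layer, because the set $\mathcal{U}_1(B,2k)$ is a union of $p-2k+1$ Euclidean balls of dimension $2k$. Writing the log covering number at scale $\delta$ as $\log(p-2k+1)+2k\log(B/\delta)$ and applying a chaining argument that balances the cardinality of the union against the bulk complexity of each ball (optimizing the cut-off scale rather than using plain Dudley), I expect to obtain a Rademacher bound of order $\sqrt{\max(2k,\,\log p+2k\log(\log p/(2k)))/n}$. Propagating through the ReLU layers exactly as in Theorem~\ref{Upper1} produces the factor $2^L$ and yields
$$\sup_{f \in \mathcal{D}_1(B,L,2k)} |\E_{P_{\Sigmastar}}f(X)-\E_{P_{\widehat{\Sigma}}}f(X)| \lesssim 2^L\sqrt{\tfrac{\max(2k,\,\log p+2k\log(\log p/(2k)))}{n}}+\epsilon$$
with the claimed probability.

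Next, as in Theorem~\ref{CovUpper}, I would pick a one-layer test function of the form $f(x)=\sigma(w^T x+b)$ (or a symmetrized variant) with $w\in\mathcal{U}_1(B,2k)$ and expand the Gaussian expectation to obtain $\E_{\mathcal{N}(0,\Sigma)}f(X) = g(w^T\Sigma w)$ for a smooth $g$ with $g'(0)\neq 0$. This gives $|\E_{P_{\Sigmastar}}f-\E_{P_{\widehat{\Sigma}}}f| \gtrsim |w^T(\widehat{\Sigma}-\Sigmastar)w|$, with the higher-order Taylor corrections controlled by the eigenvalue bounds $M_1 \le \|\Sigma\|_2 \le M_2$ and the choice $B=1/M_1$. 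The task reduces to showing
$$\sup_{w\in\mathcal{U}_1(B,2k)} |w^T(\widehat{\Sigma}-\Sigmastar)w| \;\gtrsim\; \|\widehat{\Sigma}-\Sigmastar\|_2,$$
up to a universal constant.

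This is the crux of the argument. Since $\widehat{\Sigma},\Sigmastar\in\Theta_1(k)$, the difference $A := \widehat{\Sigma}-\Sigmastar$ is symmetric and $k$-banded, and I would establish the geometric lemma
$$\|A\|_2 \;\asymp\; \max_{1 \le l \le p-2k+1}\;\sup_{\|w\|_2 = 1,\ \mathrm{supp}(w) \subseteq [l,\,l+2k-1]} |w^T A w|.$$
The proof partitions $[p]$ into consecutive blocks of size $k$ and decomposes the top eigenvector $v$ of $A$ into block components $v = \sum_i P_i v$. Bandedness forces the quadratic form to be block-tridiagonal: $v^T A v = \sum_{|i-j|\le 1}(P_iv)^T A(P_jv)$, so each surviving term is supported on a window of length $2k$. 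A pigeonhole step then extracts a single window that captures a constant fraction of $\|A\|_2$, and Cauchy--Schwarz packages the cross terms appropriately.

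Chaining the three steps yields $\|\widehat{\Sigma}-\Sigmastar\|_2 \lesssim 2^L\sqrt{\max(2k,\log p+2k\log(\log p/(2k)))/n}\vee \epsilon$, which is the claim. The main obstacles will be (i) matching the precise $\max$ form of the complexity (as opposed to a cruder $\sqrt{(2k+\log p)/n}$ bound) in the Rademacher step, which requires care in the chaining cut-off, and (ii) proving the banded spectral norm lemma with only a universal multiplicative loss, since a naive block decomposition tends to leak a factor of $k$ or $\log p$ through the cross terms.
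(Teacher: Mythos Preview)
Your three-step skeleton (structured concentration bound $\to$ sigmoid test function $\to$ spectral-norm control via $2k$-windowed quadratic forms) is exactly the paper's route, but both obstacles you flag as ``main obstacles'' are resolved in the paper by citation rather than by fresh argument.

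For obstacle (ii), the banded spectral-norm lemma you plan to prove by block decomposition and pigeonhole is already available: the paper simply invokes Lemma~2 of \cite{covlower1}, which gives
\[
\|\widehat{\Sigma}-\Sigmastar\|_2 \le \frac{3}{B^2}\max_{u\in\mathcal{U}_1(B,2k)}|u^T(\widehat{\Sigma}-\Sigmastar)u|
\]
for $k$-banded matrices, with the universal constant $3$. So you need not worry about leaking a $k$ or $\log p$ factor.

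For obstacle (i), the paper does not use a covering-number/chaining argument with a tuned cut-off. Instead it bounds the VC dimension of the first-layer class $\mathcal{D}^{(1)}$ defined by~\eqref{func6} directly: Lemma~\ref{VCdim_lemma} (already proved in the paper) shows that a union of $r$ function classes each of VC dimension $d$ has VC dimension $O(\max\{d,\log r + d\log(\log r/d)\})$, and plugging in $d=O(2k)$, $r=O(p)$ gives exactly the $\max(2k,\log p + 2k\log(\log p/(2k)))$ expression. The Dudley integral then proceeds verbatim as in Lemma~\ref{LemConcentration}. This is why the $\max$ form appears naturally, without any delicate scale optimization.

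A small imprecision in your middle step: the test function is not quite $g(w^T\Sigma w)$; as in Theorem~\ref{CovUpper} one takes $w=u/\sqrt{u^T\Sigmastar u}$, $b=1$, so the comparison is between $g(1)$ and $g(\Delta)$ with $\Delta=\sqrt{u^T\widehat{\Sigma}u/u^T\Sigmastar u}$, and one linearizes $|\Delta-1|$ rather than $|w^T(\widehat{\Sigma}-\Sigmastar)w|$ directly.
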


The proof of Theorem~\ref{SparseCovUpper1} is contained in Appendix~\ref{AppSparseCovUpper1}.

\begin{remark}
Note that when $k > \frac{\log p}{2}$, we have $\max\left\{2k, \log p + 2k\log\left(\frac{\log p}{2k}\right)\right\} = 2k$, so the upper bound simplifies to $O\left(\sqrt{\frac{k}{n}} \vee \epsilon\right)$. When $k \geq \frac{p}{2}$, the first layer~\eqref{func6} in the neural network model is actually the layer~\eqref{func1}, so Theorem~\ref{CovUpper} implies the upper bound $O\left(\sqrt{\frac{p}{n}} \vee \epsilon\right)$. This is consistent with the result of Theorem~\ref{SparseCovUpper1}, where $\max\left\{2k, \log p + 2k\log(\log{p}/(2k)\right\} \geq p$, which gives the same bound $O(\sqrt{\frac{p}{n}} \vee \epsilon)$.
\end{remark}

\begin{theorem}
\label{SparseCovUpper2}
Assume the true distribution is $P_{\Sigmastar} = \mathcal{N}(0, \Sigmastar)$, where $\Sigmastar$ lies in the parameter space~\eqref{space:covSparse2}. Suppose the contaminated distribution $P$ satisfies $W(P,P_{\Sigmastar}) \leq \epsilon$, and $X_i \stackrel{i.i.d.}{\sim} P$. Consider the function class $\mathcal{D}=\mathcal{D}_{sp}(B, L, 2k)$, with $B=\frac{1}{M_1}$. Assume $2^L\cdot \frac{2k+2k\log\frac{p}{2k}}{n} + \epsilon^2 \leq c$ for some sufficiently small constant $c$. Then the estimator $\widehat{\Sigma}$ from the W-GAN model~\eqref{model} satisfies
$$\|\widehat{\Sigma}-\Sigmastar \|_2 \lesssim 2^L \sqrt{\frac{2k+2k\log\frac{p}{2k}}{n}} \vee\epsilon,$$
with probability at least $1 - e^{-(p+n\epsilon^2)}$, and the constant only depends on $M_1$ and $M_2$.
\end{theorem}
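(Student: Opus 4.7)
The plan is to follow the same two-step template used in the proofs of Theorem~\ref{CovUpper} and Theorem~\ref{SparseCovUpper1}: first apply the general concentration bound of Theorem~\ref{Upper2} with sparsity $2k$ to control the W-GAN discrepancy on $\mathcal{D}_{sp}(B, L, 2k)$, and then translate this discriminator-level bound into a bound on $\|\widehat{\Sigma} - \Sigmastar\|_2$. The direct application of Theorem~\ref{Upper2} yields, on the high-probability event in question,
\begin{equation*}
\sup_{f \in \mathcal{D}_{sp}(B,L,2k)} \bigl|\E_{P_{\Sigmastar}} f(X) - \E_{P_{\widehat{\Sigma}}} f(X)\bigr| \;\lesssim\; 2^L\sqrt{\tfrac{2k + 2k\log(p/(2k))}{n}} \vee \epsilon,
\end{equation*}
where the additive $2k$ in the numerator absorbs the log factor cleanly when $k$ is close to $p/2$. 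It remains to reverse-bound the left-hand side by a constant multiple of $\|\widehat{\Sigma} - \Sigmastar\|_2$.

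For this, I would restrict to first-layer sigmoid test functions $f(x) = \sigma(w^{T}x)$ with $\|w\|_2 \leq B$ and $\|w\|_0 \leq 2k$, which lie in $\mathcal{D}_{sp}(B,L,2k)$. For $X \sim \mathcal{N}(0,\Sigma)$, $w^T X \sim \mathcal{N}(0, w^{T}\Sigma w)$, so $\E_{\mathcal{N}(0,\Sigma)}[\sigma(w^{T}X)] = g(w^{T}\Sigma w)$ where $g(t) := \E[\sigma(\sqrt{t}\,Z)]$ with $Z \sim \mathcal{N}(0,1)$. A mean-value expansion gives
\begin{equation*}
\E_{P_{\Sigmastar}}\sigma(w^{T}X) - \E_{P_{\widehat{\Sigma}}}\sigma(w^{T}X) = g'(\tilde t)\, w^{T}(\Sigmastar - \widehat{\Sigma})w,
\end{equation*}
for some $\tilde t$ between $w^{T}\Sigmastar w$ and $w^{T}\widehat{\Sigma} w$. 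Since $B = 1/M_1$ and $\lambda_{\max}(\Sigmastar), \lambda_{\max}(\widehat{\Sigma}) \leq M_2$, $\tilde t$ ranges over a bounded interval on which $g'$ is strictly positive, yielding $|\E_{P_{\Sigmastar}}\sigma(w^{T}X) - \E_{P_{\widehat{\Sigma}}}\sigma(w^{T}X)| \gtrsim |w^{T}(\Sigmastar - \widehat{\Sigma})w|$ uniformly in admissible $w$.

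The key structural step is a sparse-spectral-norm reduction for $\Delta := \Sigmastar - \widehat{\Sigma}$. Because both $\Sigmastar$ and $\widehat{\Sigma}$ lie in $\mathcal{F}_2(k)$, their off-diagonal parts are supported on $I_1 \times I_1$ and $I_2 \times I_2$ with $|I_1|, |I_2| \leq k$. Setting $I = I_1 \cup I_2$, so $|I| \leq 2k$, the off-diagonal of $\Delta$ lies in $I \times I$ and, after permuting coordinates, $\Delta$ is block-diagonal with blocks $\Delta_{I,I}$ and the diagonal matrix $D_{I^{c},I^{c}}$. Hence the eigenvectors of the symmetric $\Delta$ can be chosen either supported in $I$ (at most $2k$-sparse) or as standard basis vectors $e_i$ with $i \in I^{c}$ ($1$-sparse). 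In either case, the unit vector $u^*$ attaining $\|\Delta\|_2 = |u^{*T}\Delta u^*|$ may be taken $2k$-sparse. Choosing $w = Bu^*$ in the previous display yields
\begin{equation*}
\sup_{f \in \mathcal{D}_{sp}(B,L,2k)} \bigl|\E_{P_{\Sigmastar}} f(X) - \E_{P_{\widehat{\Sigma}}} f(X)\bigr| \;\gtrsim\; B^{2}\|\widehat{\Sigma} - \Sigmastar\|_2,
\end{equation*}
and chaining this with the upper bound from Theorem~\ref{Upper2} produces the claimed rate.

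The main obstacle is the sparse-spectral-norm reduction in the third step: the diagonal of $\Delta$ is not constrained to lie in any $2k$-block, so a priori one might fear that certifying $\|\Delta\|_2$ requires dense test vectors. The observation that makes the argument work is that the unconstrained diagonal contributes only eigenvectors of the form $e_i$, which are automatically $1$-sparse, so the relevant extremizer remains $2k$-sparse. The remaining ingredients---the Taylor expansion of $g$, the uniform positive lower bound on $g'$ determined by $M_1, M_2$, and the direct invocation of Theorem~\ref{Upper2}---are parallel to the corresponding steps in the proofs of Theorem~\ref{CovUpper} and Theorem~\ref{SparseCovUpper1}.
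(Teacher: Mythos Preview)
Your overall plan is the right one and matches the paper's approach: apply Theorem~\ref{Upper2} with sparsity $2k$, then lower-bound the discriminator discrepancy via a specific sigmoid test function and a sparse-spectral-norm reduction. Your block-diagonal argument showing that the extremizing unit vector for $\|\widehat{\Sigma}-\Sigmastar\|_2$ can be taken $2k$-sparse is correct and in fact gives a cleaner justification than the one-line identity the paper asserts.

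However, there is a genuine gap in your choice of test function. You take $f(x)=\sigma(w^Tx)$ with zero bias and define $g(t)=\E[\sigma(\sqrt{t}\,Z)]$ for $Z\sim\mathcal{N}(0,1)$. But the sigmoid satisfies $\sigma(-u)=1-\sigma(u)$, and since $\sqrt{t}\,Z$ is symmetric about $0$ for every $t\ge 0$, we have $g(t)=\E[\sigma(\sqrt{t}\,Z)]=\tfrac{1}{2}$ identically. Thus $g'\equiv 0$, and your mean-value step yields the trivial bound $0$ on the right-hand side; the claimed strict positivity of $g'$ on a bounded interval is false. This is precisely why the paper's proof of Theorem~\ref{CovUpper} inserts the bias $b=1$ and uses $f(x)=\sigma\!\left(\tfrac{u^Tx}{\sqrt{u^T\Sigmastar u}}+1\right)$, leading to $g(t)=\int\sigma(tz+1)\phi(z)\,dz$, which is strictly monotone with $g'(1)\neq 0$. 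With that correction (and the same pass-through setup for layers $l>1$ so that the function lies in $\mathcal{D}_{sp}(B,L,2k)$), the rest of your argument goes through exactly as in the paper.
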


The proof of Theorem~\ref{SparseCovUpper2} is contained in Appendix~\ref{AppSparseCovUpper2}.

\begin{remark}
Again, when $k \geq \frac{p}{2}$, we obtain the same bound as in Theorem~\ref{CovUpper}, which is $O\left(\sqrt{\frac{p}{n}} \vee \epsilon\right)$.
\end{remark}

Next, we provide lower bounds for the two covariance matrix estimation problems above. The derivations follow a similar argument as before, via a modulus of continuity argument. Proofs are contained in Appendix~\ref{AppSparseCovLower}.


\begin{theorem}
\label{SparseCovLower}
Assume the true distribution is $P_{\Sigmastar} = \mathcal{N}(0, \Sigmastar)$, where $\Sigmastar$ lies in the parameter space~\eqref{space:covSparse1}. Suppose the contaminated distribution $P$ satisfies $W(P,P_{\Sigmastar}) \leq \epsilon$, and $X_i \stackrel{i.i.d.}{\sim} P$. Then there exist constants $C_1, c_1>0$ such that
$$ \inf_{\widehat{\Sigma}} \sup_{\Sigmastar \in \mathcal{F}_1(k), P:W(P_{\Sigmastar},P)\leq \epsilon} \mathbb{P}\left(\|\Sigmastar - \widehat{\Sigma}\|_2 \geq C_1\left(\sqrt{\frac{k+\log p}{n}} \vee \epsilon\right)\right) \geq c_1.$$
For the parameter space~\eqref{space:covSparse2}, there exist constants $C_2, c_2>0$ such that
$$ \inf_{\widehat{\Sigma}} \sup_{\Sigmastar \in \mathcal{F}_2(k), P:W(P_{\Sigmastar}, P)\leq \epsilon} \mathbb{P}\left(\|\Sigmastar - \widehat{\Sigma}\|_2 \geq C_2\left(\sqrt{\frac{k+k\log\frac{p}{k}}{n}} \vee \epsilon\right)\right) \geq c_2.$$
\end{theorem}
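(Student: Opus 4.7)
The plan is to invoke Theorem~\ref{ThmModulus} separately on each of the two structured parameter spaces, which reduces the task to (i) establishing the uncontaminated ($\epsilon=0$) minimax lower bound $\mathcal{M}(0)$, and (ii) bounding the modulus of continuity $m(\epsilon,\Theta)$ from below. Taking the maximum of the two bounds yields the desired rates $\sqrt{(k+\log p)/n}\vee\epsilon$ and $\sqrt{(k+k\log(p/k))/n}\vee\epsilon$.

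For the modulus of continuity, the same construction works for both structured classes. I would take $\Sigma_1=I_p$ and $\Sigma_2=I_p+\delta e_1 e_1^T$, both of which are banded with any bandwidth and whose off-diagonal support is empty, so both lie in $\mathcal{F}_1(k)$ and $\mathcal{F}_2(k)$ for every $k\ge 1$. Kantorovich duality applied to the $1$-Lipschitz test function $f(x)=|x_1|$ yields $W(P_{\Sigma_1},P_{\Sigma_2})\ge \sqrt{2/\pi}\,|\sqrt{1+\delta}-1|\asymp \delta$ for small $\delta$, while $\|\Sigma_1-\Sigma_2\|_2=\delta$. Choosing $\delta\asymp\epsilon$ therefore gives $m(\epsilon,\Theta_j(k))\gtrsim\epsilon$ for $j=1,2$, matching the $\epsilon$ term in the stated lower bound.

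For the uncontaminated part $\mathcal{M}(0)$, I would adapt standard packing arguments for Gaussian covariance estimation with structural constraints. For the banded class, the rate $\sqrt{(k+\log p)/n}$ splits into two pieces: the $\sqrt{\log p/n}$ contribution comes from a Le Cam two-hypothesis test that perturbs a single off-diagonal entry $\sigma_{i,i+1}$ by $\delta\asymp\sqrt{\log p/n}$ across the $p-1$ admissible positions, while the $\sqrt{k/n}$ contribution comes from a Varshamov--Gilbert construction applied to a $\pm 1$ sign pattern on the first $k$ entries of the first row (mirrored to keep symmetry), yielding an exponential packing of banded positive-definite covariances. For the sparse class $\mathcal{F}_2(k)$, the rate $\sqrt{k\log(p/k)/n}$ follows by first selecting a $k$-subset $I\subset[p]$ out of $\binom{p}{k}\ge (p/k)^k$ choices and placing a small signed perturbation on the entries $(1,i)$ for $i\in I$, then applying Fano's inequality to the resulting packing. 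In each case, KL divergences between centered Gaussians are computed through the closed form $\mathrm{KL}(\mathcal{N}(0,\Sigma_i)\|\mathcal{N}(0,\Sigma_j))=\tfrac{1}{2}(\mathrm{tr}(\Sigma_j^{-1}\Sigma_i)-p-\log\det(\Sigma_j^{-1}\Sigma_i))$.

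The chief obstacle is calibrating the perturbation amplitude so that three requirements hold simultaneously across the whole packing: positive definiteness together with the eigenvalue bounds $M_1\le\|\Sigma\|_2\le M_2$, an $O(1/n)$ bound on the pairwise (or Fano-radius) KL divergence, and a lower bound of the target order on the pairwise spectral-norm distance. This forces a careful second-order expansion of the KL formula around $\Sigma=I_p$ and a delicate choice of the sparsity/bandedness pattern so that the perturbation matrices remain admissible. The calculations parallel the structured-covariance lower bounds in \cite{chen_gao_ren_2018}, transported here from the Huber to the Wasserstein setting via Theorem~\ref{ThmModulus}.
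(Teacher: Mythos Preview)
Your overall architecture---invoke Theorem~\ref{ThmModulus}, bound $m(\epsilon,\Theta)$ and $\mathcal{M}(0)$ separately, then combine---is exactly what the paper does. But the modulus-of-continuity step, as you wrote it, proves the wrong inequality. To establish $m(\epsilon,\Theta)\gtrsim\epsilon$ you must exhibit $\Sigma_1,\Sigma_2$ with $W(P_{\Sigma_1},P_{\Sigma_2})\le\epsilon$ and $\|\Sigma_1-\Sigma_2\|_2\gtrsim\epsilon$; that requires an \emph{upper} bound on $W$. Your Kantorovich-duality computation with $f(x)=|x_1|$ gives only a \emph{lower} bound $W\ge c\delta$, which does not certify membership in the constraint set $\{W\le\epsilon\}$ and so says nothing about the modulus. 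The construction itself is fine (and essentially the same diagonal perturbation the paper uses); you just need the opposite direction, which follows immediately from the coupling $(Z,\sqrt{1+\delta}\,Z)$ on the first coordinate, or equivalently from the closed-form $W_2$ distance between centered Gaussians, as in the proof of Theorem~\ref{CovLower}.

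For $\mathcal{M}(0)$ your plan diverges from the paper in the specific tools: the paper imports the constructions of \cite{covlower1} directly, using Assouad's lemma on the family $\Sigma(\theta)=I_p+\gamma\sum_m\theta_m B(m,k)$ for the $\sqrt{k/n}$ piece and a Le Cam argument over diagonal perturbations $\mathcal{F}_{12}$ for the $\sqrt{\log p/n}$ piece; for $\mathcal{F}_2(k)$ it cites the rank-one construction $I_p+\lambda vv^T$ from \cite{covlower2}. Your Varshamov--Gilbert/Fano route with sign patterns on a single row is a legitimate alternative, but be careful that your sparse-class construction actually lands in $\mathcal{F}_2(k)$: the off-diagonal support must lie in an $I\times I$ block, so perturbing entries $(1,i)$ for $i\in I$ forces $1\in I$. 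This is harmless for the combinatorics but should be stated. The paper's approach has the advantage of citing existing results verbatim rather than redoing the KL and separation calculations from scratch.
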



\begin{remark}
For covariance matrices with banded structure, when $k < \frac{\log p}{2}$, there is a small gap between the upper and lower bounds, caused by the term $2k\log\left(\frac{\log p}{2k}\right)$ in the upper bound. However, this term is typically small. When $\frac{\log p}{2} \leq k < \frac{p}{2}$, both the upper and lower bounds become $\Theta\left(\sqrt{\frac{k}{n}} \vee \epsilon\right)$, which match each other. When $\frac{p}{2} \leq k$, both the upper and lower bounds become $\Theta\left(\sqrt{\frac{p}{n}} \vee \epsilon\right)$, which again match each other.

For sparse covariance matrices, when $k < \frac{p}{2}$, the bounds in Theorems~\ref{SparseCovUpper2} and~\ref{SparseCovLower} match each other. When $\frac{p}{2} \leq k$, both upper and lower bound become $O(\sqrt{\frac{p}{n}} \vee \epsilon)$, thus again, match each other.
\end{remark}

\begin{remark}
Under Huber's contamination model, when $L(\Sigma, \widehat{\Sigma}) = \| \Sigma - \widehat{\Sigma}\|_2^2$, the minimax rate for banded covariance matrix estimation is $\Theta\left(\frac{k+\log p}{n} \vee \epsilon^2\right)$. For sparse covariance matrices, the minimax rate is $\Theta\left(\frac{k+k\log\frac{p}{k}}{n} \vee \epsilon^2\right)$. Both rates can be achieved using matrix-depth-based estimators~\citep{chen_gao_ren_2018}.
\end{remark}

\subsection{Elliptical distributions}

In fact, the results in Section~\ref{SecCovGauss} can be extended to covariance matrix estimation for elliptical distributions. Suppose $X$ has the stochastic representation $X = \xi A^*U$, where $\xi$ is a nonnegative random variable, $A^* \in \mathbb{R}^{p \times p}$ is a deterministic matrix, $U \in \mathbb{R}^p$ is uniformly distributed on the unit sphere, and $\xi$ is independent of $U$. The matrix $\Sigmastar = A^*A^{*T}$ is called the scatter matrix of $X$. Note that for any vector $u$, the distribution of $u^T\xi U$ does not depend on $u$ because of the symmetry of $U$. Let $h^*(t)$ denote the density of $u^T\xi U$. Since there is a one-to-one correspondence between $h^*(t)$ and distribution of $\xi$, the distribution of $X = \xi A^*U$ is determined by $h^*$ and $A^*$, and we denote the distribution by $\mathcal{E}(h^*, A^*)$.

In order to avoid identifiability issues, we impose a slightly different normalization condition than in the case of equation~\eqref{EqnRestrict}. We define the function class
$$ \mathcal{H} = \left\{h^*(\cdot) : h^*(t)\geq 0, \int h^*(t) = 1, \int R(|t|)h^*(t) dt = \int R(|t|) \phi(t) dt \right\},$$
where $\phi(t)$ is the density of the standard normal distribution and $R(t) = \min\{|t|, 1\}$.
This idea follows the parametrization used in~\cite{Gao2}. We also define ramp($u$) = $2\max\{\min\{u+1/2,1\},0\}$, and define the layer
\begin{equation}
\mathcal{D}^{(1)} = \left\{f(x)=\text{ramp}(w^T x + b): w\in \mathbb{R}^p, \|w\|_2 \leq B,  b\in \mathbb{R}\right\}. \label{funcellip}
\end{equation}
We denote the function class defined by equations~\eqref{funcellip}, \eqref{func2}, and~\eqref{func3} by $\mathcal{D}_e(B, L)$.

For estimation of $\Sigmastar$, we have the following theorem:

\begin{theorem}
\label{EllipCov}
Assume the true distribution $P_{h^*,A^*} = \mathcal{E}(h^*, A^*)$, where $\Sigmastar=A^*A^{*T}$ lies in the parameter space~\eqref{space:covGauss}. Suppose the contaminated distribution $P$ satisfies $W(P,P_{h^*, A^*}) \leq \epsilon$, and $X_i \stackrel{i.i.d.}{\sim} P$. Consider the function class $\mathcal{D}=\mathcal{D}_e(B, L)$, with $B=\frac{1}{M_1}$. Assume $2^L\cdot \frac{p}{n} + \epsilon^2 \leq c$ for some sufficiently small constant $c$. Then the covariance matrix estimator $\widehat{\Sigma}$ from the W-GAN model~\eqref{model} satisfies 
$$\|\widehat{\Sigma}-\Sigmastar \|_2 \lesssim 2^L\sqrt{\frac{p}{n}} \vee \epsilon,$$
with probability at least $1 - e^{-(p+n\epsilon^2)}$, where the constant prefactor depends on $M_1$ and $M_2$.
\end{theorem}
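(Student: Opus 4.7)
The plan is to adapt the proof of Theorem~\ref{CovUpper} from the Gaussian case to the elliptical setting, where the extra complication is the nuisance shape parameter $h^*$. The argument has three ingredients: a concentration bound, a reduction to one-dimensional marginals, and the construction of a witness function that separates the scatter matrix from the shape.

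First, I would establish the analog of Theorem~\ref{Upper1} for the function class $\mathcal{D}_e(B,L)$. The only structural change is a ramp activation in the first layer in place of the sigmoid; since ramp is Lipschitz and the subsequent layers are unchanged, the Rademacher-complexity argument in the proof of Theorem~\ref{Upper1} carries over essentially verbatim, yielding
\[
\sup_{f \in \mathcal{D}_e(B,L)} \bigl|\E_{P_{h^*,A^*}} f(X) - \E_{P_{\widehat{h},\widehat{A}}} f(X)\bigr| \;\lesssim\; 2^L \sqrt{\frac{p}{n}} + \frac{\epsilon}{M_1}
\]
with probability at least $1-e^{-(p+n\epsilon^2)}$, where $(\widehat{h},\widehat{A})$ are the jointly GAN-estimated parameters and $\widehat{\Sigma} = \widehat{A}\widehat{A}^T$.

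Next, I would exploit the spherical symmetry of $U$: for any $w\in\real^p$ with $\|w\|_2 \le B$, the marginal $w^T X$ under $P_{h^*,A^*}$ has the same distribution as $\alpha S$, where $\alpha := \sqrt{w^T \Sigmastar w}$ and $S\sim h^*$, while under $P_{\widehat{h},\widehat{A}}$ it has the same distribution as $\widehat{\alpha}\widehat{S}$ with $\widehat{\alpha} := \sqrt{w^T \widehat{\Sigma} w}$ and $\widehat{S}\sim \widehat{h}$. Thus for $f(x) = \mathrm{ramp}(w^T x + b) \in \mathcal{D}_e(B,L)$,
\[
\E_{P_{h^*,A^*}} f(X) - \E_{P_{\widehat{h},\widehat{A}}} f(X) \;=\; \int \mathrm{ramp}(\alpha s + b)\,h^*(s)\,ds \;-\; \int \mathrm{ramp}(\widehat{\alpha} s + b)\,\widehat{h}(s)\,ds.
\]
The heart of the argument is then to exhibit $w$, $b$, and possibly a second-layer combination realizable in $\mathcal{D}_e(B,L)$ for $L\ge 2$ such that the right-hand side lower-bounds a constant multiple of $|\alpha^2 - \widehat{\alpha}^2|$, \emph{uniformly} over $\widehat{h}\in \mathcal{H}$. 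The normalization $\int R(|t|)h(t)\,dt = \int R(|t|)\phi(t)\,dt$, shared by $h^*$ and $\widehat{h}$, is what makes this possible: it prevents $\widehat{h}$ from cancelling a scatter perturbation by redistributing mass in the tails. The ramp activation is paired with $R$ via the identity $\mathrm{ramp}(u) - 1 = \mathrm{sign}(u)\min(2|u|,1)$, so appropriate combinations of shifted ramps can be expressed in terms of $R$-type integrals whose $h$-dependence is pinned down by the normalization, up to a remainder controlled by $M_1$ and $M_2$.

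Finally, I would take $w = Bv$ with $v$ the unit top eigenvector of $\Sigmastar - \widehat{\Sigma}$, giving $|\alpha^2 - \widehat{\alpha}^2| = B^2 \|\Sigmastar - \widehat{\Sigma}\|_2$; the eigenvalue bounds force $\alpha,\widehat{\alpha}$ into an interval of positive constants depending on $M_1, M_2$, so $|\alpha - \widehat{\alpha}|$ and $|\alpha^2 - \widehat{\alpha}^2|$ are comparable. Combining with the concentration bound from the first step yields $\|\widehat{\Sigma} - \Sigmastar\|_2 \lesssim 2^L \sqrt{p/n} \vee \epsilon$, with constants depending on $M_1$ and $M_2$. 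The main obstacle is the witness-function construction: engineering a discriminator that isolates the scatter matrix from the nuisance shape $h^*$ using only the single moment-type constraint imposed by $\mathcal{H}$. This parallels the discriminator construction used by \cite{Gao2} for TV-GAN elliptical covariance estimation under Huber contamination, but must be adapted to ramp activations within the Lipschitz-constrained W-GAN function class.
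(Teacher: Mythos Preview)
Your proposal follows the same three-step architecture as the paper's proof: extend Theorem~\ref{Upper1} to $\mathcal{D}_e(B,L)$, reduce to one-dimensional marginals via spherical symmetry, and build a witness discriminator that exploits the $\mathcal{H}$-normalization to separate scatter from shape. That is the right plan.

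The one place where you are vaguer than the paper, and where your wording suggests a slightly harder route, is the witness construction. You anticipate that the $\widehat{h}$-dependence will be ``pinned down by the normalization, up to a remainder controlled by $M_1$ and $M_2$.'' In the paper's construction there is no remainder: the nuisance $\widehat{h}$ is eliminated \emph{exactly}. The trick is to normalize the first-layer weight by $\widehat{\Sigma}$ rather than $\Sigmastar$: take $w = u/\sqrt{u^T\widehat{\Sigma}u}$ for a unit vector $u$, and use two first-layer nodes $\mathrm{ramp}(\pm w^Tx - 1/2)$ combined in the second layer so that $f(x) = R\bigl(|u^Tx|/\sqrt{u^T\widehat{\Sigma}u}\bigr)$. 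Under $P_{\widehat{h},\widehat{A}}$ this evaluates to $\int R(|s|)\,\widehat{h}(s)\,ds$, which by the $\mathcal{H}$-constraint equals the fixed constant $\int R(|t|)\phi(t)\,dt = \int R(|s|)h^*(s)\,ds$. Under $P_{h^*,A^*}$ it evaluates to $\int R(\Delta|s|)h^*(s)\,ds$ with $\Delta = \sqrt{u^T\Sigmastar u / u^T\widehat{\Sigma}u}$. So the concentration bound directly controls $|g(\Delta)-g(1)|$ for $g(t)=\int R(t|s|)h^*(s)\,ds$, a strictly increasing function of $t>0$, and the rest proceeds exactly as in Theorem~\ref{CovUpper}. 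Your choice $w=Bv$ with $v$ the top eigenvector of $\Sigmastar-\widehat{\Sigma}$ would not produce this exact cancellation; you would then genuinely need to control an $\widehat{h}$-dependent remainder, which is doable but unnecessary.
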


The proof of Theorem~\ref{EllipCov} is contained in Appendix~\ref{AppEllipCov}. We can see that the structure of the covariance matrix does not affect the proof; thus, for elliptical distributions with banded or sparse covariance matrices, if we use the corresponding estimators described in Section~\ref{SecCovSpecial}, we can obtain the same rates as in Theorems~\ref{SparseCovUpper1} and~\ref{SparseCovUpper2}.

\begin{remark}
Note that we also constrain the estimate to be in the true parameter space, so that $\widehat{\Sigma}$ lies in the space~\eqref{space:covGauss} and $\widehat{h}$ satisfies $\int R(|t|)\widehat{h}(t) dt = \int R(|t|) \phi(t) dt$. We can typically realize this via a projection method, i.e., after updating the estimate of $\widehat{\Sigma}$ at each step, we project it into the true parameter space.
\end{remark}


\section{Linear regression}
\label{sec:reg}

In the linear regression model, for a random vector $(X,Y)$, where $X\in \mathbb{R}^p$ and $Y\in \mathbb{R}$, we assume that $X \sim \mathcal{N}(0,\Sigmastar)$ and the conditional distribution is $P_{Y|X} = \mathcal{N}(X^T \betastar, 1)$. We denote the joint distribution of $(X,Y)$ by $P_{\betastar}$ (the distribution also depends on $\Sigmastar$, but for simplicity of notation, we omit $\Sigmastar$). Under the Wasserstein contamination model, we wish to estimate $\betastar$ using i.i.d.\ samples from a distribution $P$ satisfying $W(P, P_{\betastar}) \leq \epsilon$.

Loss functions of interest for linear regression can take several different forms. One common loss function is the $\ell_2$-error between the true and estimated parameters, i.e., $\|\betastar - \widehat{\beta}\|_2$. An alternative loss function is the prediction error, $\E_{X \sim \mathcal{N}(0, \Sigma)} (X^T\betastar - X^T\widehat{\beta})^2$. Another version of the prediction error is $\E_{(X,Y)}(Y - X^T\widehat{\beta})^2$, where the expectation is taken over the true joint distribution $(X,Y)$. Our goal is to find an estimator which minimizes the worst-case risk $\max\limits_{P_{\betastar}, P} \E L(\betastar, \widehat{\beta})$, where the loss function takes one of the above three forms and the expectation is taken over the observed data set. We first provide upper and lower bounds for our proposed estimator when $L$ is the squared $\ell_2$-error (Theorems~\ref{RegUpper} and~\ref{RegLower}), showing the near-optimality of the proposed W-GAN-based estimator. We also provide a lower bound in the case of prediction error, $L(\betastar, \widehat{\beta}) =\E_X(X^T\betastar - X^T\widehat{\beta})^2$.

\subsection{Upper bound}
\label{SecRegUpper}

We first provide an upper bound for the W-GAN based estimator under the above regression model setup. We consider the parameter space
\begin{equation}
\Theta = \{\betastar \in \mathbb{R}^p, \Sigmastar \in \mathbb{R}^{p\times p}: \|\betastar\|_2\leq B_1, B_2 \leq \|\Sigmastar\|_2 \leq B_3\}. \label{space:reg}
\end{equation}
The proof the following result is contained in Appendix~\ref{AppRegUpper}:

\begin{theorem}
\label{RegUpper}
Assume the true distribution is $P_{\betastar}$, where $\betastar$ lies in the parameter space~\eqref{space:reg}. Suppose the contaminated distribution $P$ satisfies $W(P,P_{\betastar}) \leq \epsilon$, and $(X_i, Y_i) \stackrel{i.i.d.}{\sim} P$. Consider the function class $\mathcal{D}=\mathcal{D}(B, L)$, with $B = B_1+ 1$. Assume $2^L\cdot \frac{p}{n} + \epsilon^2 \leq c$ for some sufficiently small constant $c$. Then the estimator $\widehat{\beta}$ from the W-GAN model~\eqref{model} satisfies
$$\|\beta - \widehat{\beta}\|_2^2 \lesssim 2^L \sqrt{\frac{p}{n}} \vee \epsilon, $$
with probability at least $1 - e^{-(p+n\epsilon^2)}$, where the constant only depends on $B_1, B_2$, and $B_3$.
\end{theorem}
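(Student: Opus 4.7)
The overall strategy is to apply Theorem~\ref{Upper1} to the augmented data vector $(X, Y) \in \mathbb{R}^{p+1}$ and then convert the resulting IPM bound into one on $\|\widehat{\beta} - \betastar\|_2^2$ by exhibiting a single discriminator $f \in \mathcal{D}(B, L)$ whose discrepancy under $P_{\betastar}$ and $P_{\widehat{\beta}}$ lower-bounds $\|\widehat{\beta} - \betastar\|_2^2 \wedge 1$. Taking $\delta = e^{-(p+n\epsilon^2)}$ in Theorem~\ref{Upper1} already yields, with probability at least $1 - e^{-(p+n\epsilon^2)}$,
$$ \sup_{f \in \mathcal{D}(B,L)} \bigl|\E_{P_{\betastar}} f - \E_{P_{\widehat{\beta}}} f \bigr| \;\lesssim\; 2^L \sqrt{p/n} \,\vee\, \epsilon. $$

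The key observation is that, under any $P_\beta$, the residual $Y - X^T \widehat{\beta} = X^T(\beta - \widehat{\beta}) + \varepsilon$ is centered Gaussian with variance $\tau_\beta^2 := 1 + (\beta - \widehat{\beta})^T \Sigmastar (\beta - \widehat{\beta})$, so $P_{\betastar}$ and $P_{\widehat{\beta}}$ are distinguished through this residual only by the second-moment gap $v^T \Sigmastar v \gtrsim \|v\|_2^2$, where $v := \betastar - \widehat{\beta}$. Motivated by this, I take
$$ f(x, y) = \tfrac{1}{2}\bigl[\sigma\bigl((y - x^T\widehat{\beta}) + b\bigr) - \sigma\bigl((y - x^T\widehat{\beta}) - b\bigr)\bigr], $$
for a fixed constant $b > 0$. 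Both first-layer sigmoid nodes share the weight vector $w = (-\widehat{\beta}^T, 1)^T$, which satisfies $\|w\|_2 \le \sqrt{B_1^2 + 1} \le B_1 + 1 = B$ by the constraint $\|\widehat{\beta}\|_2 \le B_1$; the output-layer coefficients $(\tfrac12, -\tfrac12)$ satisfy $\sum_i |w_i| = 1$. For $L > 2$, the intermediate ReLU layers can relay each nonnegative sigmoid output unchanged via a weight of $1$, so $f \in \mathcal{D}(B, L)$ for every $L \ge 2$.

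Using $Y - X^T \widehat{\beta} \stackrel{d}{=} \tau_\beta G$ with $G \sim \mathcal{N}(0, 1)$ and the symmetry identity $\sigma(-u) = 1 - \sigma(u)$, one checks that
$$ \E_{P_\beta} f = \tfrac{1}{2}\, \E[g(\tau_\beta G)], \qquad g(u) := \sigma(u+b) - \sigma(u-b). $$
The function $g$ is nonnegative, symmetric about $0$, and strictly unimodal, so a layer-cake representation implies that $\rho \mapsto \E[g(\rho G)]$ is strictly decreasing in $\rho \ge 0$. Stein's lemma further gives $\frac{d}{d\rho^2}\E[g(\rho G)]\big|_{\rho=1} = \E[\sigma''(G+b)]$, which is a strictly negative constant for any sufficiently small $b > 0$ (its leading $b$-expansion is $b\,\E[\sigma'''(G)] < 0$). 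Combining this local Taylor estimate with the global monotonicity yields the two-regime lower bound
$$ \bigl|\E_{P_{\betastar}} f - \E_{P_{\widehat{\beta}}} f \bigr| \;\gtrsim\; \min\!\bigl(\tau_{\betastar}^2 - 1,\, 1\bigr) \;\gtrsim\; \min\!\bigl(\|v\|_2^2,\, 1\bigr), $$
where the last step uses $v^T \Sigmastar v \gtrsim \|v\|_2^2$ from the well-conditioning assumption on $\Sigmastar$ built into~\eqref{space:reg}.

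Combining the upper and lower bounds gives $\min(\|v\|_2^2, 1) \lesssim 2^L \sqrt{p/n} \vee \epsilon$; the smallness hypothesis $2^L p/n + \epsilon^2 \le c$ forces the right-hand side below $1$, so the minimum must equal $\|v\|_2^2$, yielding the claim. The principal obstacle is the IPM lower bound: unlike Gaussian location estimation, where a linear test function produces a first-order discrepancy in $\|v\|_2$, here both $P_{\betastar}$ and $P_{\widehat{\beta}}$ place $(X, Y)$ at mean zero and are distinguishable only through a second-order residual-variance effect, which is precisely why the stated rate involves the \emph{squared} Euclidean distance $\|v\|_2^2$ on the left.
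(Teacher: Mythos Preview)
Your proof is correct and reaches the same conclusion, but the route differs from the paper's in two respects.

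First, the paper centers the discriminator on $\betastar$ rather than $\widehat{\beta}$: it takes the single first-layer node with weight $(-\betastar^{\,T},1)$ and bias $1$, giving $f(X,Y)=\sigma(Y-X^T\betastar+1)$, and then relays this one nonnegative output through the remaining layers. This lets the authors reuse verbatim the scalar function $g(t)=\E_{Z\sim\mathcal N(0,1)}\sigma(tZ+1)$ and its monotonicity already established in the covariance proof (Theorem~\ref{CovUpper}), so no new calculus is needed. Your symmetrized two-node construction $\tfrac12[\sigma(\cdot+b)-\sigma(\cdot-b)]$ is more elaborate---requiring the unimodality/layer-cake argument and the Stein computation $\tfrac{d}{d\rho^2}\E g(\rho G)\big|_{\rho=1}=\E[\sigma''(G+b)]$---but it is self-contained and makes the even/odd structure of the residual-variance test explicit.

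Second, because you center on $\widehat{\beta}$, the variance under $P_{\widehat{\beta}}$ is exactly $1$ and the variance gap under $P_{\betastar}$ involves $\Sigmastar$; the paper's choice yields a gap involving $\widehat{\Sigma}$ instead, so it implicitly relies on the estimator $\widehat{\Sigma}$ also lying in the constrained parameter space to lower-bound $v^T\widehat{\Sigma}v$. In that sense your centering is slightly cleaner. On the other hand, your discriminator needs two first-layer nodes and hence $L\ge 2$, whereas the paper's single-node construction works for every $L$. Both arguments then finish identically via $|t-1|\lesssim|g(t)-g(1)|$ and $x\lesssim\sqrt{1+x}-1$ for small $x$.
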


\begin{remark}
\cite{Zhu1} also considered regression under the Wasserstein-1 contamination model, but they focused on the prediction error loss, $\E_{p^*}(Y-X^T\widehat{\beta})^2$, where $p^*$ is the true distribution of $(X,Y)$. Under certain conditions, their proposed algorithm has an upper bound of $O\left(\left(\epsilon \vee \frac{p}{n}\right)^{1-1/(k-1)}\right)$, for a true distribution $p^*$ having finite $k^{\text{th}}$ moments.
\end{remark}

\subsection{Lower bound on estimation error}
\label{SecRegLower1}

Assume $X\sim \mathcal{N}(0,\sigma^2I_p)$ and $Y = X^T\beta+z$, where $z\sim \mathcal{N}(0,1)$ is independent of $X$.
We consider the parameter space
\begin{equation}
\label{EqnRegLowerParam}
\Theta = \left\{\betastar \in \mathbb{R}^p: \|\betastar\|_2\leq B_1, B_1 > \frac{\sqrt{p}}{2}\right\}.
\end{equation}
Let $P_{\betastar}$ denote the joint distribution of $(X,Y)$.
The following result provides a lower bound on the minimax risk in terms of the $\ell_2$-error:

\begin{theorem}
\label{RegLower}
Assume the true distribution is $P_{\betastar}$, where $\betastar$ lies in the parameter space~\eqref{EqnRegLowerParam}. Suppose the perturbed distribution $P$ satisfies $W(P, P_{\betastar})\leq \epsilon$, and $(X_i, Y_i) \stackrel{i.i.d.}{\sim} P$. Then there exist constants $C, c>0$ such that
$$ \inf_{\widehat{\beta}} \sup_{\betastar \in \Theta, P:W(P_{\betastar}, P)\leq \epsilon} \mathbb{P}\left(\|\betastar - \widehat{\beta}\|_2 \geq C\left(\sqrt{\frac{p}{n}} \vee \frac{\sqrt{\epsilon}}{\sigma}\right)\right) \geq c. $$
\end{theorem}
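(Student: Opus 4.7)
The plan is to apply Theorem~\ref{ThmModulus} with loss $L(\beta_1,\beta_2)=\|\beta_1-\beta_2\|_2$, which trivially satisfies the triangle inequality. That theorem then reduces the probabilistic minimax lower bound to two separate tasks: (i) establishing a clean-data lower bound $\mathcal{M}(0)\gtrsim \sqrt{p/n}$, and (ii) lower-bounding the modulus of continuity as $m(\epsilon,\Theta)\gtrsim \sqrt{\epsilon}/\sigma$.

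For task (i), I would run a standard Fano argument over a Varshamov--Gilbert packing of the $\ell_2$-ball in $\mathbb{R}^p$; the assumption $B_1>\sqrt{p}/2$ leaves room for such a packing to fit entirely inside $\Theta$. The closed-form Gaussian KL divergence $\mathrm{KL}(P_{\beta_1}^{\otimes n}\|P_{\beta_2}^{\otimes n})=\tfrac{n\sigma^2}{2}\|\beta_1-\beta_2\|_2^2$ controls the mutual information over that packing, and the usual Fano calculation yields the desired $\sqrt{p/n}$ rate.

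For task (ii), I would exhibit a pair $(\beta_1,\beta_2)\in\Theta\times\Theta$ with $W(P_{\beta_1},P_{\beta_2})\le\epsilon$ and $\|\beta_1-\beta_2\|_2$ as large as possible. The naive coupling that keeps $(X,z)$ shared across the two joint distributions gives
$$W(P_{\beta_1},P_{\beta_2})\le \mathbb{E}\bigl|X^\top(\beta_1-\beta_2)\bigr|\;\asymp\; \sigma\|\beta_1-\beta_2\|_2,$$
which by itself only yields a modulus bound of order $\epsilon/\sigma$. To push this to $\sqrt{\epsilon}/\sigma$, the plan is to use an orthogonal-transformation coupling: pick an orthogonal $U$ close to the identity, set $\beta_2=U\beta_1$, and couple $(X,Y)\sim P_{\beta_1}$ with $(U^\top X,Y)\sim P_{\beta_2}$, which has transport cost $\mathbb{E}\|(I-U)X\|_2\asymp \sigma\theta$ for rotation angle $\theta$. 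Taking $\|\beta_1\|_2$ of order $\sqrt{p}$ (allowed because $B_1>\sqrt{p}/2$) gives $\|\beta_1-\beta_2\|_2\asymp \theta\sqrt{p}$, and balancing the linear coupling, the rotation coupling, and the clean-rate term across regimes of $\epsilon$ yields the claimed $\sqrt{\epsilon}/\sigma$ bound.

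The main obstacle is task (ii): producing a coupling that gives a modulus of $\sqrt{\epsilon}/\sigma$ rather than the weaker $\epsilon/\sigma$ that comes from simple additive perturbations of $\beta$. The key idea is to exploit the rotational symmetry of $\mathcal{N}(0,\sigma^2I_p)$ together with the relatively large parameter radius $B_1\asymp\sqrt{p}$, converting a small angular shift (which is cheap in Wasserstein) into a large Euclidean displacement in $\beta$-space. Once task (ii) is handled, combining it with task (i) through Theorem~\ref{ThmModulus} delivers the probabilistic minimax lower bound $C(\sqrt{p/n}\vee \sqrt{\epsilon}/\sigma)$.
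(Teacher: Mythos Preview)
Your overall framework---apply Theorem~\ref{ThmModulus} with task~(i) a Fano/packing argument for $\mathcal{M}(0)$ and task~(ii) a modulus computation---is exactly the paper's, and your treatment of task~(i) is fine (the paper routes this through a random-design regression bound from \cite{Wai19} rather than Fano directly, but the two are equivalent here).

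The gap is in task~(ii). Your rotation coupling gives $W_1(P_{\beta_1},P_{\beta_2})\lesssim\sigma\theta$ and $\|\beta_1-\beta_2\|_2\asymp\theta\,\|\beta_1\|_2\le\theta B_1$, so the best modulus it delivers is
\[
m(\epsilon,\Theta)\;\gtrsim\;B_1\,\frac{\epsilon}{\sigma}\;\asymp\;\sqrt{p}\,\frac{\epsilon}{\sigma}.
\]
This dominates $\sqrt{\epsilon}/\sigma$ only when $\epsilon\gtrsim 1/p$. In the complementary regime $\epsilon\ll 1/p$ your ``balancing'' would need the clean-rate term to cover $\sqrt{\epsilon}/\sigma$, i.e.\ $\sqrt{p/n}\gtrsim\sqrt{\epsilon}/\sigma$, equivalently $n\lesssim p\sigma^2/\epsilon$; but no such restriction is assumed. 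Thus in the range $\epsilon\ll 1/p$ and $n\gg p\sigma^2/\epsilon$ none of your three ingredients (linear coupling $\epsilon/\sigma$, rotation coupling $\sqrt{p}\,\epsilon/\sigma$, clean rate $\sqrt{p/n}$) reaches $\sqrt{\epsilon}/\sigma$, and the argument does not close. The rotation idea is neat but fundamentally produces a modulus that is \emph{linear} in $\epsilon$; no amount of balancing against other linear-in-$\epsilon$ terms will manufacture a $\sqrt{\epsilon}$ dependence.

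The paper's route for task~(ii) is completely different and does not use any rotation. It takes $\beta_1=0$, $\beta_2=(\sqrt{\epsilon}/\sigma)\,u$ for a unit vector $u$, observes that both joints $P_{\beta_1}$ and $P_{\beta_2}$ are centered $(p{+}1)$-dimensional Gaussians with covariances $\Gamma_1,\Gamma_2$, and evaluates the Wasserstein-2 distance via the closed-form Bures formula
\[
W_2^2\bigl(\mathcal{N}(0,\Gamma_1),\mathcal{N}(0,\Gamma_2)\bigr)=\operatorname{tr}\Gamma_1+\operatorname{tr}\Gamma_2-2\operatorname{tr}\!\left((\Gamma_1^{1/2}\Gamma_2\Gamma_1^{1/2})^{1/2}\right),
\]
which reduces to a $2\times 2$ eigenvalue computation and then $W_1\le W_2$. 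No use is made of $B_1\asymp\sqrt{p}$ or of multi-regime balancing: the entire modulus bound comes from a single explicit pair and the Gaussian $W_2$ formula. This is the piece your proposal is missing.
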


The proof of Theorem~\ref{RegLower} is contained in Appendix~\ref{AppRegLower}. Comparing the upper and lower bounds, we can see that they indeed match in terms of the order of $\epsilon$, although for the order of $n$, the bounds do not match. We conjecture that the upper bound is not tight, and either requires a more careful analysis or a different class of estimators.


\subsection{Lower bound on prediction error}
If we instead consider the loss function $L(\beta_1, \beta_2) = \E_{X}(X^T\beta_1 - X^T\beta_2)^2,$ then by similar arguments as in Section~\ref{SecRegLower1}, we may derive the following result:

\begin{theorem}
\label{RegLower2}
Assume the true distribution is $P_{\betastar}$, where $\betastar$ lies in the parameter space~\eqref{EqnRegLowerParam}. Suppose the perturbed distribution $P$ satisfies $W(P, P_{\betastar})\leq \epsilon$, and $(X_i, Y_i) \stackrel{i.i.d.}{\sim} P$. Then there exist constants $C, c>0$ such that
$$ \inf_{\widehat{\beta}} \sup_{\beta \in \Theta, P:W(P_\beta,P)\leq \epsilon} \mathbb{P}\left(\E_X\|X^T\beta - X^T\widehat{\beta}\|_2^2 \geq C\left(\frac{p}{n} \vee \epsilon\right)\right) \geq c. $$
\end{theorem}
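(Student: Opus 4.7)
The plan is to apply the modulus-of-continuity lower bound (Theorem~\ref{ThmModulus}) in parallel to the proof of Theorem~\ref{RegLower}, but with a loss tailored to the prediction error. Since $X\sim\mathcal{N}(0,\sigma^2 I_p)$, the prediction-error loss factors as $\E_X(X^T\beta-X^T\widehat{\beta})^2 = \sigma^2\|\beta-\widehat{\beta}\|_2^2$. I would therefore work with the (rescaled norm) metric $\widetilde{L}(\beta_1,\beta_2)=\sigma\|\beta_1-\beta_2\|_2 = \sqrt{\E_X(X^T\beta_1 - X^T\beta_2)^2}$, which satisfies the triangle inequality and whose square is exactly the prediction-error loss appearing in the statement. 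A high-probability lower bound on $\widetilde{L}$ then translates directly into the desired lower bound on the prediction-error loss.

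The argument proceeds in two steps. First, I would pin down the uncontaminated baseline $\mathcal{M}(0)$: for Gaussian linear regression with design covariance $\sigma^2 I_p$, a standard minimax argument (Fano's method applied to an $\ell_2$-packing of a ball inside $\{\beta:\|\beta\|_2\le B_1\}$, which is feasible because $B_1>\sqrt{p}/2$) yields $\inf_{\widehat{\beta}}\sup_{\beta}\mathbb{P}(\|\widehat{\beta}-\beta\|_2\ge C\sqrt{p/(n\sigma^2)})\ge c$, so in $\widetilde{L}$ units we obtain $\mathcal{M}(0)\asymp\sqrt{p/n}$. Second, I would lower-bound the modulus $m(\epsilon,\Theta) = \sup_{W(P_{\beta_1},P_{\beta_2})\le\epsilon}\widetilde{L}(\beta_1,\beta_2)$ using the same two-point construction underlying the proof of Theorem~\ref{RegLower}: pick $\beta_1,\beta_2\in\Theta$ with $\beta_2-\beta_1$ of appropriately calibrated magnitude and direction, bound $W(P_{\beta_1},P_{\beta_2})$ via an explicit coupling, and verify that the resulting $\widetilde{L}(\beta_1,\beta_2)$ matches the $\sqrt{\epsilon}$ scale claimed by the theorem.

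Combining both pieces via Theorem~\ref{ThmModulus} gives $\mathcal{M}(\epsilon)\asymp \sqrt{p/n}\vee\sqrt{\epsilon}$ for $\widetilde{L}$; squaring then delivers the claimed $p/n\vee\epsilon$ lower bound on the prediction-error loss with constant probability.

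The main obstacle lies in the modulus step: one needs to obtain the $\sqrt{\epsilon}$ (rather than $\epsilon$) rate, which is not immediately visible from the naive same-$X$, same-noise coupling $W(P_{\beta_1},P_{\beta_2})\lesssim \sigma\|\beta_1-\beta_2\|_2$. The sharper rate is precisely what the proof of Theorem~\ref{RegLower} (in Appendix~\ref{AppRegLower}) establishes via a more refined construction that exploits the interaction between the Wasserstein radius and the noise scale $\sigma$. Once that construction is in hand, the reduction from the $\ell_2$ loss to the prediction-error loss is purely algebraic and presents no further difficulty.
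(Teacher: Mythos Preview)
Your proposal is correct and follows essentially the same route as the paper's proof: apply Theorem~\ref{ThmModulus} to the square-root prediction loss $\widetilde{L}(\beta_1,\beta_2)=\sqrt{\E_X(X^T\beta_1-X^T\beta_2)^2}=\sigma\|\beta_1-\beta_2\|_2$, reuse the two-point construction $\beta_1=0$, $\beta_2=\tfrac{\sqrt{\epsilon}}{\sigma}u$ from Appendix~\ref{AppRegLower} to obtain $m(\epsilon,\Theta)\ge\sqrt{\epsilon}$, and combine with an $\mathcal{M}(0)\asymp\sqrt{p/n}$ baseline. The only minor deviation is that the paper simply points to inequality~\eqref{EqnWaiReg} for $\mathcal{M}(0)$, whereas you propose a direct Fano argument on an $\ell_2$-packing scaled by $\sigma$; your version is arguably cleaner since \eqref{EqnWaiReg} is stated for the fixed-design in-sample quantity $\tfrac{1}{n}\|X(\widehat{\beta}-\beta^*)\|_2^2$ rather than the population prediction error.
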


The proof of Theorem~\ref{RegLower2} is contained in Appendix~\ref{AppRegLower2}.
Theorem~\ref{RegLower2} provides a lower bound on the minimax \emph{prediction} rather than estimation error, as provided by Theorem~\ref{RegLower}. Since the prediction error loss function involves taking an expectation over $X$, the $\sigma$ factor is canceled out. The common term $\frac{p}{n}$ in both bounds arises from the minimax rate of linear regression estimation without contamination.





\section{Numerical experiments}
\label{sec:num}

In this section, we provide empirical results on the performance of the W-GAN-based estimator~\eqref{model}.
Additional details regarding implementation are provided in Appendix~\ref{AppImplementation}, and tables containining full numerical results are contained in Appendix~\ref{AppExp}.


\subsection{Location estimation}
\label{SecLocSetup}

For the discriminator, as stated in Theorem~\ref{LocUpper}, we follow the function class defined by equations~\eqref{func1}, \eqref{func2}, and~\eqref{func3}. The input is a $p$-dimensional vector; the first layer is a dense layer with $\frac{p}{2}$ output units and sigmoid activation function; the second layer is a dense layer with $\frac{p}{4}$ output units and ReLU activation function; and the output layer is a dense layer without activation. For the generator, the input is a random sample $Z \sim \mathcal{N}(0,I_p)$ and the output is $\eta^{(t)}+Z$, where $\eta^{(t)}$ is the estimate at time $t$.

The true distribution is $\mathcal{N}(0,I_p)$, and we consider three different perturbation models:\begin{itemize}
\item Model 1: The perturbed distribution is $0.9\mathcal{N}(\theta,I_p) + 0.1Q$, where $Q$ is the standard Cauchy distribution.
\item Model 2: The perturbed distribution is $0.9\mathcal{N}(\theta,I_p) + 0.1\mathcal{N}(2*\mathbf{1_p}, I_p)$.
\item Model 3: The perturbed distribution is $0.9\mathcal{N}(\theta,I_p) + 0.1Q$, where $Q$ is the standard Gumbel distribution.
\end{itemize}


As seen in Table~\ref{TabLocation}, the W-GAN-based estimator consistently outperforms the simple sample mean estimator. Among the three perturbation models, the Cauchy distribution has the heaviest tails and generates more outliers. We can see that the sample mean has a much larger error when $n$ and $p$ increase. However, the error of the W-GAN-based estimator is much smaller. Furthermore, the error of the W-GAN-based estimator decreases with the sample size, while the error of the sample mean  does not.


\subsection{Covariance matrix estimation}

For the discriminator, as stated in Theorem~\ref{CovUpper}, we again follow the function class defined by equations~\eqref{func1}, \eqref{func2}, and~\eqref{func3}, as described in Section~\ref{SecLocSetup}.
For the generator, the input is random sample $Z \sim \mathcal{N}(0,I_p)$ and the output is $A^{(t)}Z$, where $A^{(t)}$ is the weight parameter and $A^{(t)}{A^{(t)}}^{T}$ is the estimate of the covariance matrix at time $t$.

The true distribution is $\mathcal{N}(0, I_p)$, and the three perturbations models are the same as in the location estimation setting.
As seen in Table~\ref{TabCovariance}, our W-GAN-based estimator consistently outperforms the simple sample covariance estimator and is very robust to data perturbations. When adding Cauchy perturbations, the sample covariance estimator can be fairly unstable, yet the W-GAN based estimator still gives reasonable estimations. Note that when $n>1000$, the decrease in spectral norm loss is not significant; however, this may be due to the effect of the $\epsilon$ term in the bound $O\left(\sqrt{\frac{p}{n}}\vee \epsilon\right)$.


\subsection{Regression}

For discriminator, we follow the function class defined by equations~\eqref{func4}, \eqref{func2}, and \eqref{func3}. The input is a $p$-dimensional vector; the first layer is a dense layer with $\frac{p}{2}$ output units and sigmoid activation function; the second layer is a dense layer with $\frac{p}{4}$ output units and ReLU activation function; and the output layer is a dense layer without activation. For the generator, the input is a random sample $(X, Z) \sim \mathcal{N}(0,I_{p+1})$ and the output is $(X, X^T\beta^{(t)}+Z)$, where $\beta^{(t)}$ is the weight parameter for the generator at time $t$.

The true distribution is defined by $X \sim \mathcal{N}(0, I)$, $Y|X \sim \mathcal{N}(X^T\beta, 1)$, where $\beta$ is vector $(-0.05,-0.05,0,0.05,0.05)$ repeated $\frac{p}{5}$ times. The perturbed model is $X \sim \mathcal{N}(0, I)$, $Y|X \sim 0.8\mathcal{N}(X^T\beta, 1)+0.2Q$, where $Q$ is absolute value of the standard Cauchy distribution. For comparison, we use least squares estimation.

From Table~\ref{TabRegression1}, we can see that the W-GAN based estimator significantly outperforms the OLS estimator, and the $\ell_2$-norm loss decreases with the sample size. Table~\ref{TabRegression2} shows the loss for different perturbation levels. The loss increases with $\epsilon$, as predicted by our theory.



\section{Discussion and future work}
\label{sec:dis}

We have proposed a W-GAN-based estimator for robust estimation under Wasserstein contamination. Our estimator is applicable to a wide range of problems, including location estimation, covariance estimation and regression, and in many cases is seen to be minimax optimal. We have also presented promising numerical results from training W-GANs.




The connections we have drawn between robust estimation and GANs are both theoretically interesting and practically useful: Models from deep learning which are not completely characterized from a theoretical standpoint can nonetheless be leveraged to achieve the minimax rate for a variety of robust estimation problems. Furthermore, although optimal robust estimators are often computationally intractable, training GAN-based estimators is becoming easier with advances in deep learning training platforms. The framework studied in this paper has natural generalizations to other contamination models, in which the distance used to quantify perturbations would be encoded into the metric of the GAN.

We conclude by mentioning a few open questions. In the examples we have studied, we have imposed various constraints on the parameter space to derive upper bounds on our estimators; can we still obtain optimal estimators using W-GANs without including those constraints on the parameter space? In the linear regression setting, our upper and lower bounds do not quite match, so the question of whether W-GAN-based estimators are minimax optimal (in both estimation and prediction error) also remains open. It would also be interesting to study W-GAN-based estimators for sparse linear regression. We leave these questions to future work.


\section*{Acknowledgments}
The authors gratefully acknowledge support from NSF grant DMS-1749857.






\bibliographystyle{chicago}
\bibliography{Bibliography-MM-MC}

\begin{thebibliography}{}

\bibitem[\protect\citeauthoryear{Adler and Lunz}{Adler and
  Lunz}{2018}]{adler2018banach}
Adler, J. and S.~Lunz (2018).
\newblock Banach {W}asserstein {GAN}.
\newblock In {\em Advances in NeurIPS}, pp.\  6754--6763.

\bibitem[\protect\citeauthoryear{Arjovsky, Chintala, and Bottou}{Arjovsky
  et~al.}{2017}]{pmlr-v70-arjovsky17a}
Arjovsky, M., S.~Chintala, and L.~Bottou (2017).
\newblock Wasserstein {GAN}.
\newblock {\em arXiv preprint arXiv:1701.07875\/}.

\bibitem[\protect\citeauthoryear{Balakrishnan, Du, Li, and Singh}{Balakrishnan
  et~al.}{2017}]{sparse_loc1}
Balakrishnan, S., S.~S. Du, J.~Li, and A.~Singh (2017).
\newblock Computationally efficient robust sparse estimation in high
  dimensions.
\newblock In {\em Conference on Learning Theory}, pp.\  169--212.

\bibitem[\protect\citeauthoryear{Cai, Ma, and Wu}{Cai et~al.}{2015}]{covlower2}
Cai, T., Z.~Ma, and Y.~Wu (2015).
\newblock Optimal estimation and rank detection for sparse spiked covariance
  matrices.
\newblock {\em Probability Theory and Related Fields\/}~{\em 161\/}(3-4),
  781--815.

\bibitem[\protect\citeauthoryear{Cai, Zhang, and Zhou}{Cai
  et~al.}{2010}]{covlower1}
Cai, T.~T., C.-H. Zhang, and H.~H. Zhou (2010).
\newblock Optimal rates of convergence for covariance matrix estimation.
\newblock {\em The Annals of Statistics\/}~{\em 38\/}(4), 2118--2144.

\bibitem[\protect\citeauthoryear{Cai and Zhou}{Cai and
  Zhou}{2012}]{cai2012optimal}
Cai, T.~T. and H.~H. Zhou (2012).
\newblock Optimal rates of convergence for sparse covariance matrix estimation.
\newblock {\em The Annals of Statistics\/}~{\em 40\/}(5), 2389--2420.

\bibitem[\protect\citeauthoryear{Cao, Mo, Zhang, Jia, Shen, and Tan}{Cao
  et~al.}{2019}]{cao2019multi}
Cao, J., L.~Mo, Y.~Zhang, K.~Jia, C.~Shen, and M.~Tan (2019).
\newblock Multi-marginal {W}asserstein {GAN}.
\newblock In {\em Advances in Neural Information Processing Systems}, pp.\
  1776--1786.

\bibitem[\protect\citeauthoryear{Chen, Gao, and Ren}{Chen
  et~al.}{2016}]{chen_gao_ren_2016}
Chen, M., C.~Gao, and Z.~Ren (2016).
\newblock A general decision theory for {H}uber’s $\epsilon$-contamination
  model.
\newblock {\em Electronic Journal of Statistics\/}~{\em 10\/}(2), 3752--3774.

\bibitem[\protect\citeauthoryear{Chen, Gao, and Ren}{Chen
  et~al.}{2018}]{chen_gao_ren_2018}
Chen, M., C.~Gao, and Z.~Ren (2018).
\newblock Robust covariance and scatter matrix estimation under {H}uber’s
  contamination model.
\newblock {\em The Annals of Statistics\/}~{\em 46\/}(5), 1932--1960.

\bibitem[\protect\citeauthoryear{Collins and Wiens}{Collins and
  Wiens}{1985}]{collins_wiens_1985}
Collins, J.~R. and D.~P. Wiens (1985).
\newblock Minimax variance {$M$}-estimators in $\varepsilon $-contamination
  models.
\newblock {\em The Annals of Statistics\/}~{\em 13\/}(3), 1078--1096.

\bibitem[\protect\citeauthoryear{Cybenko}{Cybenko}{1989}]{Cyb89}
Cybenko, G. (1989).
\newblock Approximation by superpositions of a sigmoidal function.
\newblock {\em Mathematics of Control, Signals and Systems\/}~{\em 2\/}(4),
  303--314.

\bibitem[\protect\citeauthoryear{Donoho}{Donoho}{1994}]{Donoho3}
Donoho, D.~L. (1994).
\newblock Statistical estimation and optimal recovery.
\newblock {\em The Annals of Statistics\/}, 238--270.

\bibitem[\protect\citeauthoryear{Donoho and Liu}{Donoho and
  Liu}{1988}]{Donoho2}
Donoho, D.~L. and R.~C. Liu (1988).
\newblock The ``automatic" robustness of minimum distance functionals.
\newblock {\em The Annals of Statistics\/}, 552--586.

\bibitem[\protect\citeauthoryear{Donoho and Liu}{Donoho and
  Liu}{1991}]{Donoho1}
Donoho, D.~L. and R.~C. Liu (1991).
\newblock Geometrizing rates of convergence, ii.
\newblock {\em The Annals of Statistics\/}, 633--667.

\bibitem[\protect\citeauthoryear{Dudley}{Dudley}{1969}]{dudley1969speed}
Dudley, R.~M. (1969).
\newblock The speed of mean {G}livenko-{C}antelli convergence.
\newblock {\em The Annals of Mathematical Statistics\/}~{\em 40\/}(1), 40--50.

\bibitem[\protect\citeauthoryear{Gao, Liu, Yao, and Zhu}{Gao
  et~al.}{2018}]{Gao1}
Gao, C., J.~Liu, Y.~Yao, and W.~Zhu (2018).
\newblock Robust estimation and generative adversarial nets.
\newblock {\em arXiv preprint arXiv:1810.02030\/}.

\bibitem[\protect\citeauthoryear{Gao, Yao, and Zhu}{Gao et~al.}{2020}]{Gao2}
Gao, C., Y.~Yao, and W.~Zhu (2020).
\newblock Generative adversarial nets for robust scatter estimation: A proper
  scoring rule perspective.
\newblock {\em JMLR\/}~{\em 21\/}(160), 1--48.

\bibitem[\protect\citeauthoryear{Gao and Kleywegt}{Gao and
  Kleywegt}{2016}]{gao2016distributionally}
Gao, R. and A.~J. Kleywegt (2016).
\newblock Distributionally robust stochastic optimization with {W}asserstein
  distance.
\newblock {\em arXiv preprint arXiv:1604.02199\/}.

\bibitem[\protect\citeauthoryear{Goodfellow, Pouget-Abadie, Mirza, Xu,
  Warde-Farley, Ozair, Courville, and Bengio}{Goodfellow
  et~al.}{2014}]{NIPS2014_5423}
Goodfellow, I., J.~Pouget-Abadie, M.~Mirza, B.~Xu, D.~Warde-Farley, S.~Ozair,
  A.~Courville, and Y.~Bengio (2014).
\newblock Generative adversarial nets.
\newblock In {\em Advances in NIPS}, pp.\  2672--2680.

\bibitem[\protect\citeauthoryear{Huber}{Huber}{1964}]{huber:1964}
Huber, P.~J. (1964).
\newblock Robust estimation of a location parameter.
\newblock {\em The Annals of Mathematical Statistics\/}, 73--101.

\bibitem[\protect\citeauthoryear{Johnstone}{Johnstone}{1994}]{sparse_loc2}
Johnstone, I.~M. (1994).
\newblock On minimax estimation of a sparse normal mean vector.
\newblock {\em Annals of Statistics\/}, 271--289.

\bibitem[\protect\citeauthoryear{Lehmann and Casella}{Lehmann and
  Casella}{2006}]{lehmann2006theory}
Lehmann, E.~L. and G.~Casella (2006).
\newblock {\em Theory of Point Estimation}.
\newblock Springer Science \& Business Media.

\bibitem[\protect\citeauthoryear{Liu, Gu, and Samaras}{Liu
  et~al.}{2019}]{liu2019wasserstein}
Liu, H., X.~Gu, and D.~Samaras (2019).
\newblock Wasserstein {GAN} with quadratic transport cost.
\newblock In {\em Proceedings of the IEEE International Conference on Computer
  Vision}, pp.\  4832--4841.

\bibitem[\protect\citeauthoryear{Ma and Wu}{Ma and Wu}{2015}]{Ma1}
Ma, Z. and Y.~Wu (2015).
\newblock Volume ratio, sparsity, and minimaxity under unitarily invariant
  norms.
\newblock {\em IEEE Transactions on Information Theory\/}~{\em 61\/}(12),
  6939--6956.

\bibitem[\protect\citeauthoryear{Martin and Walker}{Martin and
  Walker}{2014}]{sparse_loc3}
Martin, R. and S.~G. Walker (2014).
\newblock Asymptotically minimax empirical {B}ayes estimation of a sparse
  normal mean vector.
\newblock {\em Electronic Journal of Statistics\/}~{\em 8\/}(2), 2188--2206.

\bibitem[\protect\citeauthoryear{Nguyen, Wainwright, and Jordan}{Nguyen
  et~al.}{2010}]{Nguyen1}
Nguyen, X., M.~J. Wainwright, and M.~I. Jordan (2010).
\newblock Estimating divergence functionals and the likelihood ratio by convex
  risk minimization.
\newblock {\em IEEE Transactions on Information Theory\/}~{\em 56\/}(11),
  5847--5861.

\bibitem[\protect\citeauthoryear{Nowozin, Cseke, and Tomioka}{Nowozin
  et~al.}{2016}]{NIPS2016_6066}
Nowozin, S., B.~Cseke, and R.~Tomioka (2016).
\newblock f-{GAN}: Training generative neural samplers using variational
  divergence minimization.
\newblock In {\em Advances in NIPS}, pp.\  271--279.

\bibitem[\protect\citeauthoryear{Srivastava and Bilodeau}{Srivastava and
  Bilodeau}{1989}]{srivastava1989stein}
Srivastava, M. and M.~Bilodeau (1989).
\newblock Stein estimation under elliptical distributions.
\newblock {\em Journal of Multivariate Analysis\/}~{\em 28\/}(2), 247--259.

\bibitem[\protect\citeauthoryear{Tieleman and Hinton}{Tieleman and
  Hinton}{2017}]{tieleman2017divide}
Tieleman, T. and G.~Hinton (2017).
\newblock Divide the gradient by a running average of its recent magnitude.
  coursera: Neural networks for machine learning.
\newblock {\em Technical Report\/}.

\bibitem[\protect\citeauthoryear{Titouan, Flamary, Courty, Tavenard, and
  Chapel}{Titouan et~al.}{2019}]{titouan2019sliced}
Titouan, V., R.~Flamary, N.~Courty, R.~Tavenard, and L.~Chapel (2019).
\newblock Sliced {G}romov-{W}asserstein.
\newblock In {\em Advances in Neural Information Processing Systems}, pp.\
  14753--14763.

\bibitem[\protect\citeauthoryear{Tolstikhin, Bousquet, Gelly, and
  Schoelkopf}{Tolstikhin et~al.}{2017}]{tolstikhin2017wasserstein}
Tolstikhin, I., O.~Bousquet, S.~Gelly, and B.~Schoelkopf (2017).
\newblock Wasserstein auto-encoders.
\newblock {\em arXiv preprint arXiv:1711.01558\/}.

\bibitem[\protect\citeauthoryear{Van Der~Vaart and Wellner}{Van Der~Vaart and
  Wellner}{1996}]{VanWel96}
Van Der~Vaart, A.~W. and J.~A. Wellner (1996).
\newblock Weak convergence.
\newblock In {\em Weak Convergence and Empirical Processes}, pp.\  16--28.
  Springer.

\bibitem[\protect\citeauthoryear{Vershynin}{Vershynin}{2018}]{Ver18}
Vershynin, R. (2018).
\newblock {\em High-Dimensional Probability: An Introduction with Applications
  in Data Science}, Volume~47.
\newblock Cambridge University Press.

\bibitem[\protect\citeauthoryear{Villani}{Villani}{2008}]{villani_cedric_2009}
Villani, C. (2008).
\newblock {\em Optimal Transport: Old and New}, Volume 338.
\newblock Springer Science \& Business Media.

\bibitem[\protect\citeauthoryear{Wainwright}{Wainwright}{2019}]{Wai19}
Wainwright, M.~J. (2019).
\newblock {\em High-Dimensional Statistics: A Non-Asymptotic Viewpoint},
  Volume~48.
\newblock Cambridge University Press.

\bibitem[\protect\citeauthoryear{Wong, Schmidt, and Kolter}{Wong
  et~al.}{2019}]{WonEtal19}
Wong, E., F.~R. Schmidt, and J.~Z. Kolter (2019).
\newblock Wasserstein adversarial examples via projected {S}inkhorn iterations.
\newblock {\em arXiv preprint arXiv:1902.07906\/}.

\bibitem[\protect\citeauthoryear{Wu, Ding, Huang, and Yu}{Wu
  et~al.}{2020}]{WuEtal20}
Wu, K., G.~W. Ding, R.~Huang, and Y.~Yu (2020).
\newblock On minimax optimality of {GAN}s for robust mean estimation.
\newblock In {\em AISTATS}, pp.\  4541--4551.

\bibitem[\protect\citeauthoryear{Yu}{Yu}{1997}]{Yu97}
Yu, B. (1997).
\newblock Le cam.
\newblock {\em Festschrift for Lucien Le Cam: Research Papers in Probability
  and Statistics, Springer-Verlag, New York\/}, 423--435.

\bibitem[\protect\citeauthoryear{Zhu, Jiao, and Steinhardt}{Zhu
  et~al.}{2019}]{Zhu1}
Zhu, B., J.~Jiao, and J.~Steinhardt (2019).
\newblock Generalized resilience and robust statistics.
\newblock {\em arXiv preprint arXiv:1909.08755\/}.

\bibitem[\protect\citeauthoryear{Zhu, Jiao, and Tse}{Zhu et~al.}{2020}]{Zhu2}
Zhu, B., J.~Jiao, and D.~Tse (2020).
\newblock Deconstructing generative adversarial networks.
\newblock {\em IEEE Transactions on Information Theory\/}.

\end{thebibliography}

\newpage
\appendix


\section{Experimental results}

In this appendix, we provide additional implementation details for our experiments, as well as tables containing the numerical results referenced in Section~\ref{sec:num}.

\subsection{Implementation details}
\label{AppImplementation}

We denote the weight parameters in the neural network model by $w$, and denote the functions in $\mathcal{D}$ by $f_w$, corresponding to the discriminator. The generator, which is used to obtain $P_\theta$, depends on the specific problem: For location estimation, the generator is just an addition layer, $g(Z) = \theta + Z$, where the input is $Z \sim \mathcal{N}(0, I_p)$. For covariance matrix estimation, $g(Z) = AZ$, where the input is $Z \sim \mathcal{N}(0, I_p)$ and $AA^T$ is the estimation for covariance matrix. For regression, $g(X, Z) = (X, X^T\beta + Z)$, where the input is $(X, Z) \sim \mathcal{N}(0, I_{p})$.

The training alternates between two steps: optimizing over $w$ and optimizing over $\theta$. At time step $t$, we fix $\theta^{(t)}$ and optimize over $w$ to maximize $\E_{P_n}f_w(X) - \E_{P_{\theta^{(t)}}}f_w(X)$, where the expectation over $\E_{P_{\theta^{(t)}}}$ is approximated by sample average. Then we fix $w^{(t)}$ and optimize over $\theta$ to minimize $\E_{P_n}f_{w^{(t)}}(X) - \E_{P_\theta}f_{w^{(t)}}(X)$, which is equivalent to maximizing $\E_{P_\theta}f_{w^{(t)}}(X)$. Again, the expectation over $\E_{P_\theta}$ is approximated by the sample average.

We use the RMSprop optimizer~\citep{tieleman2017divide} and choose the step size to be 0.005. We tested different step sizes and concluded that 0.005 worked best. We trained the model in a batch version, with batch size 32 for sample size 100, batch size 128 for sample size larger than 1000 and less than 5000, and batch size 256 for sample size larger than 5000. We increased the batch size with larger training sets to make full use of computing resources and accelerate training. 

Finally, note that we implemented the 1-norm or 2-norm constraints on weight parameters during training using a simple truncation method. After using the ordinary RMSProp optimization algorithm to update the weight parameters, we normalized the weights to satisfy the required constraints.


\subsection{Numerical results}
\label{AppExp}

See Tables~\ref{TabLocation}, \ref{TabCovariance}, \ref{TabRegression1}, and~\ref{TabRegression2}.

\begin{table}[h!]
\begin{center}
\begin{tabular}{|c|c|c|c|c|c|}
\hline
\multicolumn{2}{|c|}{Model} & n= 100 & n=1024 & n=4096 & n=10000 \\ \hline
&p = 10 & \makecell{0.1263(0.0472)\\0.2362} & \makecell{0.0635(0.0260)\\0.1332} &\makecell{0.0392(0.0146)\\0.6462} & \makecell{0.0435(0.0156)\\0.7495}\\
\cline{2-6}
M1&p = 20 & \makecell{0.3692(0.1095)\\16315.9303} &  \makecell{0.0853(0.0119)\\1.7714} & \makecell{0.0616(0.0112)\\21.2478}  & \makecell{0.0527(0.0183)\\2.1945}  \\ 
\cline{2-6}
&p = 40 & \makecell{0.6685(0.0949)\\20.3320} & \makecell{0.1875(0.0124)\\11.7123}  & \makecell{0.0791(0.0166)\\581.6685}  & \makecell{0.0837(0.0157)\\16.1688}  \\ 
\cline{2-6}
&p = 80 &\makecell{1.7087(0.3155)\\31.4197} & \makecell{0.30546(0.0346)\\109.0997}  &  \makecell{0.1546 (0.0246)\\21.2031} &  \makecell{0.1662(0.0332)\\32.5745} \\
\hline
&p = 10 & \makecell{0.3902(0.1349)\\0.5007}  & \makecell{0.2603(0.1920)\\0.4045}  &  \makecell{0.1773(0.1790)\\0.4230} & \makecell{0.2796(0.3861)\\0.3916}  \\ 
\cline{2-6}
M2&p = 20 & \makecell{0.6064(0.1281)\\0.7057} & \makecell{0.1135(0.0701)\\0.8483}  & \makecell{0.1213(0.0929)\\0.8730}  & \makecell{0.1189(0.0479)\\0.8591}  \\ 
\cline{2-6}
&p = 40 & \makecell{0.8871(0.1359)\\1.2490}  &  \makecell{0.1342(0.0495)\\1.4181} &  \makecell{0.1253(0.0664)\\1.1583} &  \makecell{0.0575(0.0213)\\1.6888} \\ 
\cline{2-6}
&p = 80 & \makecell{17.3555(4.5030)\\12.1336}& \makecell{0.2620(0.0318)\\3.3112}  & \makecell{0.4882(0.2159)\\2.9795}  & \makecell{0.4875(0.1633)\\3.2067}  \\
\hline
&p = 10 & \makecell{0.1556(0.0530)\\0.0846} & \makecell{0.0851(0.0529)\\0.0503}  & \makecell{0.0898(0.0747)\\0.0374}  & \makecell{0.0659(0.0503)\\0.0372}  \\ 
\cline{2-6}
M3&p = 20 &\makecell{0.3943(0.0967)\\0.2635} & \makecell{0.1233(0.0512)\\0.0846}  & \makecell{0.0985(0.0680)\\0.0849}  & \makecell{0.1367(0.0590)\\0.0749}  \\ 
\cline{2-6}
&p = 40 & \makecell{0.8033(0.0830)\\0.6013} &  \makecell{0.1639(0.0368)\\0.1326} &  \makecell{0.2169(0.0782)\\0.1529} &  \makecell{0.0959(0.0343)\\0.1408} \\ 
\cline{2-6}
&p = 80 & \makecell{2.0441(0.1171)\\1.8067} &  \makecell{ 0.3699(0.0536)\\0.3336} &  \makecell{0.1902(0.0278)\\0.2828} &  \makecell{0.2754(0.0627)\\0.2749} \\
\hline
\end{tabular}
\caption{Squared error loss, $\|\widehat{\theta}-\theta\|_3^2$ (Theorems~\ref{LocUpper} and~\ref{LocLower}). The discriminator has two hidden layers. For each $(p,n)$ combination, we repeat the W-GAN training 10 times and the upper number is the mean squared error loss (the upper number in parentheses is the standard deviation). The lower number is the squared error loss for the sample mean estimator. \label{TabLocation}}
\end{center}
\end{table}%

\begin{table}[h!]
\begin{center}
\begin{tabular}{|c|c|c|c|c|c|}
\hline
\multicolumn{2}{|c|}{Model} & n= 100 & n=1000 & n=5000 & n=10000 \\ \hline
&p = 10 & \makecell{2.6706 (0.6188)\\989.1726} & \makecell{2.5324 (0.6786)\\3331.3031} &\makecell{3.7381 (1.5220)\\1018.1120} & \makecell{33.6696 (72.5449)\\929.4809}\\
\cline{2-6}
M1&p = 20 & \makecell{3.9733 (0.5873)\\341.9525} &  \makecell{3.3892 (0.9017)\\9317.2763} & \makecell{5.3617 (1.2598)\\31848.3172}  & \makecell{5.5856 (1.1482)\\4843.0940}  \\ 
\cline{2-6}
&p = 40 & \makecell{6.0299 (0.7105)\\1992.2081} & \makecell{4.6945 (1.0452)\\907.6427}  & \makecell{6.6223 (1.2968)\\526805.2677}  & \makecell{7.2449 (1.2185)\\6988452.4528}  \\ 
\hline
&p = 10 & \makecell{3.3931 (1.3983)\\22.6888}  & \makecell{3.6228 (1.0960)\\23.4328}  &  \makecell{5.6791 (2.4001)\\21.9613} & \makecell{5.4892 (2.6689)\\22.1124}  \\ 
\cline{2-6}
M2&p = 20 & \makecell{3.0725 (0.5562)\\22.9557} & \makecell{4.0251 (0.6219)\\42.4484}  & \makecell{7.4081 (1.1769)\\42.1029}  & \makecell{7.7177 (1.3608)\\43.9374)}  \\ 
\cline{2-6}
&p = 40 & \makecell{5.5444 (0.7963)\\82.7338}  &  \makecell{4.2158 (0.5013)\\83.8858} &  \makecell{9.0508 (1.3909)\\95.6900} &  \makecell{8.1087 (0.7254)\\87.1616} \\ 
\hline
&p = 10 & \makecell{3.0510 (0.6039)\\28.0165} & \makecell{3.1097 (0.6735)\\13.6526}  & \makecell{1.8482 (0.9037)\\10.6381}  & \makecell{1.4652 (0.9515)\\10.9958}  \\ 
\cline{2-6}
M3&p = 20 &\makecell{4.4205 (0.6598)\\43.0277} & \makecell{3.3260 (1.1579)\\22.8060}  & \makecell{2.0038 (0.4412)\\18.5284}  & \makecell{2.4031 (1.0378)\\19.2928}  \\ 
\cline{2-6}
&p = 40 & \makecell{5.4583 (0.8564)\\62.7293} &  \makecell{4.2630 (0.6037)\\36.2662} &  \makecell{5.4488 (0.6776)\\35.8376} &  \makecell{5.4391 (0.6912)\\33.0079} \\ 
\hline
\end{tabular}
\caption{Spectral norm loss, $\|\Sigma - \widehat{\Sigma}\|_2$ (Theorems~\ref{CovUpper} and~\ref{CovLower}). The discriminator has two hidden layers. For each $(p,n)$ combination, we repeat the W-GAN training for 10 times and the upper number is the mean spectral norm loss (the upper number in parentheses is the standard deviation). The lower number is the spectral norm loss for the sample covariance matrix estimator. \label{TabCovariance}}
\end{center}
\end{table}%

\begin{table}[h!]
\begin{center}
\begin{tabular}{|c|c|c|c|c|}
\hline
 & n=100 & n=1000 & n=5000 & n=10000 \\ \hline
p=10 &\makecell{1.0109 (0.3597)\\1.0677} &\makecell{0.5363 (0.1814)\\0.4556} &\makecell{0.4346 (0.0861)\\0.6734} &\makecell{0.4568 (0.0654)\\1.5149} \\ \hline
p=20 &\makecell{1.1022 (0.0662)\\0.7083} &\makecell{0.6872 (0.0659)\\2.2157} &\makecell{0.4855 (0.0747)\\0.9745} &\makecell{0.5422 (0.048)\\0.6300} \\ \hline
p=40 &\makecell{1.5738 (0.138)\\1.4732} &\makecell{0.8461 (0.1064)\\0.8298} &\makecell{0.9101 (0.1157)\\2.1426} &\makecell{0.5152 (0.0573)\\0.3434} \\ \hline
p=80 &\makecell{2.7049 (0.091)\\3.7407} &\makecell{1.2745 (0.0992)\\1.2491} &\makecell{0.9946 (0.0801)\\2.3301} &\makecell{0.6947 (0.0399)\\21.4306} \\ \hline
\end{tabular}
\caption{2-norm loss, $\|\beta-\widehat{\beta}\|_2$ (Theorems~\ref{RegUpper} and~\ref{RegLower}). The discriminator has two hidden layers. For each $(p,n)$ combination, we repeat the W-GAN training for 10 times and the upper number is the mean spectral norm loss (the upper number in parentheses is the standard deviation). The lower number is the spectral norm loss for the ordinary least squares (OLS) estimator. \label{TabRegression1}}
\end{center}
\end{table} %

\begin{table}[h!]
\begin{center}
\begin{tabular}{|c|c|c|c|c|}
\hline
& $\epsilon$=0.05 & $\epsilon$=0.10  & $\epsilon$=0.20 & $\epsilon$=0.50 \\ \hline
p=10 & \makecell{0.4579 (0.1404)\\ 0.1404} &\makecell{0.5122 (0.1762)\\0.2086} &\makecell{0.5273 (0.1398)\\0.9581}  &\makecell{0.5623 (0.1350)\\1.8899} \\ \hline
p=20 & \makecell{0.5268 (0.0910)\\1.6259} & \makecell{0.5248 (0.0918)\\3.2439}  &\makecell{0.5626 (0.1086)\\0.6579} & \makecell{0.6221 (0.1319)\\2.9520} \\ \hline 
p=40 & \makecell{0.6182 (0.0693)\\0.3524} & \makecell{0.6373 (0.0524)\\0.4137} &\makecell{0.6957 (0.0954)\\1.1675} & \makecell{0.7033 (0.0868)\\2.1114} \\ \hline
p=80 & \makecell{0.7589 (0.0757)\\0.6753} & \makecell{0.7628 (0.0646)\\1.4139} &\makecell{0.7970 (0.0793)\\52.0010} & \makecell{0.8442 (0.1046)\\3.2878} \\ \hline
\end{tabular}
\caption{2-norm loss, $\|\beta-\widehat{\beta}\|_2$ (Theorems~\ref{RegUpper} and~\ref{RegLower}). Here, the sample size is $n=4096$. For each $(p,\epsilon)$ combination, we repeat the W-GAN training for 10 times and the upper number is the mean spectral norm loss and the upper number in the bracket is the standard deviation. The lower number is the spectral norm loss for the ordinary least squares (OLS) estimator. \label{TabRegression2}}
\end{center}
\end{table}


\section{GANs and GAN-based estimators}
\label{AppGAN}

In this appendix, we provide a brief review of GANs. Recall that the original GAN model solves an minimax optimization problem, and is formulated as follows \citep{NIPS2014_5423}:
\begin{equation*}
\inf_G \sup_D \left\{\E_{X \sim P_{\thetastar}} \log D(X) + \E_Z \log(1-D(G(Z)))\right\},
\end{equation*}
where the function $G$ is known as the generator, the function $D$ is known as the discriminator, $P_{\thetastar}$ is the true distribution, and the second expectation is taken over the known prior distribution of $Z$. The function $D$ estimates the probability that a sample is drawn from the true distribution $P_{\thetastar}$ rather than the generator distribution $G(\cdot)$.

Suppose we fix $G$ and consider the inner supremum. Let $Q$ denote the distribution of $G(Z)$. Note that this $Q$ depends on $G$, but we do not explicitly show it in the notation, for simplicity. Let $p(x)$ and $q(x)$ denote the probability density functions of $P_\theta$ and $Q$ with respect to some common measure, e.g., $\frac{1}{2} P_\theta+ \frac{1}{2} Q$. The inner supremum is attained by $D^*(x) = \frac{p(x)}{p(x)+q(x)}$. Plugging in the formula for $D^*(x)$, we obtain
the new formulation
\begin{equation*}
\inf_G \left\{2\text{JS}(P_\theta\|Q)-\log 4\right\},
\end{equation*}
where $\text{JS}(P_\theta\|Q)$ is the Jensen-Shannon divergence between $P_\theta$ and $Q$. 
Thus, we can view the learned distribution $Q$ as an approximation to the true distribution, from which an estimator is subsequently derived. For example, for learning the mean of a true distribution, we can use the mean of $Q$ as our estimator.

After the original formulation of the GAN model, further generalizations have appeared in the literature by changing the divergence. The TV-GAN is defined by $\inf_{\theta} TV(P_{\thetastar}, P_{\theta})$, where $P_{\thetastar}$ is the true distribution and $P_{\theta}$ is the estimated distribution. By \cite{Nguyen1}, we have $TV(P, Q)=\sup_{f(\cdot)\in[0,1]} \left\{\E_P f(X) - \E_{Q} f(X)\right\}$, where $f(\cdot)$ is a function bounded in $[0,1]$. 

When training the TV-GAN model, the unknown $P_\theta$ is replaced by the empirical distribution $P_n$ of the observed data. Thus, the TV-GAN-based estimator is defined by
\begin{equation}
\widehat{\theta} = \argmin_{\theta} \sup_{f(\cdot)\in[0,1]} \left\{\E_{P_n}f(X) - \E_{P_\theta} f(X)\right\}. \label{tv3}
\end{equation}
Similarly, a W-GAN is defined by $\inf_{\theta} W(P_{\thetastar}, P_{\theta})$, where $P_{\thetastar}$ is the true distribution and $P_\theta$ is the estimated distribution. 

When training a W-GAN model, we again need to use observed data. Thus, using the duality form for the Wasserstein-1 distance~\eqref{Wgan}, the W-GAN-based estimator is given by
\begin{equation}
\widehat{\theta} = \argmin_{\theta} \sup_{\|f(x)\|_L\leq B} \left\{\E_{P_n}f(X) - \E_{P_\theta}f(X)\right\}, \label{w2}
\end{equation}
where $P_n$ is the empirical distribution of the observed data. 



\section{General upper and lower bounds}

In this appendix, we provide proofs of the general theorems involving upper and lower bounds, as well as proofs of supporting technical lemmas.


\subsection{Properties of neural network function classes}
\label{AppNN}

We begin by proving several properties of the function classes used in our neural network constructions.

\begin{lemma}
\label{LemNNLip}
For any $f^{(L)} \in \mathcal{D}(B,L)$ and $x, y \in \mathbb{R}^p$, we have $|f^{(L)}(x) - f^{(L)}(y)| \leq \frac{B}{4}\|x - y\|_2$ and $|f^{(L)}(x) - f^{(L)}(y)| \leq 2$.
\end{lemma}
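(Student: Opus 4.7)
The plan is to establish both bounds by induction on the layer index, exploiting two key numerical facts about the building blocks of the network: the sigmoid has derivative bounded by $1/4$, and ReLU is 1-Lipschitz with nonnegative output.

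First, I would dispatch the Lipschitz bound. The sigmoid satisfies $\sigma'(u) = \sigma(u)(1-\sigma(u)) \le 1/4$, so $\sigma$ is $\frac{1}{4}$-Lipschitz. Combined with Cauchy--Schwarz and $\|w\|_2 \le B$, this gives that any first-layer node is $\frac{B}{4}$-Lipschitz: $|\sigma(w^T x+b)-\sigma(w^T y+b)| \le \frac{1}{4}|w^T(x-y)| \le \frac{B}{4}\|x-y\|_2$. Then I would prove the inductive step: if each $f_i^{(l)}$ is $\frac{B}{4}$-Lipschitz, then for any $f^{(l+1)} \in \mathcal{D}^{(l+1)}$ with weights satisfying $\sum |w_i| \le 1$, using 1-Lipschitzness of ReLU,
\[
|f^{(l+1)}(x)-f^{(l+1)}(y)| \le \Bigl|\sum_i w_i\bigl(f_i^{(l)}(x)-f_i^{(l)}(y)\bigr)\Bigr| \le \sum_i |w_i|\cdot \tfrac{B}{4}\|x-y\|_2 \le \tfrac{B}{4}\|x-y\|_2.
\]
The same argument handles the output layer, which has the identical weight constraint but no activation, yielding $|f^{(L)}(x)-f^{(L)}(y)| \le \frac{B}{4}\|x-y\|_2$.

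For the uniform bound of $2$, I would show by a parallel induction that every hidden layer has range in $[0,1]$, so that the output layer has range in $[-1,1]$. The base case is immediate because $\sigma(\cdot) \in (0,1)$. For the inductive step, if $f_i^{(l)}(x) \in [0,1]$ for all $i$, then $\bigl|\sum_i w_i f_i^{(l)}(x)\bigr| \le \sum_i |w_i| \le 1$, and applying ReLU maps this into $[0,1]$. Finally, at the output layer (no ReLU), the same weighted-sum estimate gives $f^{(L)}(x) \in [-1,1]$, so $|f^{(L)}(x)-f^{(L)}(y)| \le 2$.

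Neither step presents a genuine obstacle; the only subtlety is that the constraint $\sum_i |w_i| \le 1$ on hidden-layer weights is exactly what is needed to propagate both the Lipschitz constant and the sup-norm bound through the network without amplification, which motivates why the weight-norm constraint was introduced in the definitions \eqref{func2} and \eqref{func3}.
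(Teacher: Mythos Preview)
Your proposal is correct and follows essentially the same inductive scheme as the paper: the Lipschitz bound is established exactly as you describe (sigmoid $\tfrac14$-Lipschitz at the first layer, then propagated via ReLU being 1-Lipschitz and the $\ell_1$ weight constraint). For the bound of $2$, the paper inducts directly on the difference $|f_j^{(h+1)}(x)-f_j^{(h+1)}(y)|\le \max_i|f_i^{(h)}(x)-f_i^{(h)}(y)|$, whereas you track the range $[0,1]$ through the hidden layers and conclude $f^{(L)}\in[-1,1]$; this is a minor presentational variant that yields the same conclusion with no extra work.
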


\begin{proof}
We first prove the result for nodes in layers 1 through $L$ by induction. Note that for the sigmoid function $\sigma(x)=\frac{1}{1+e^{-x}}$, we have $|\sigma'(x)| \leq \frac{1}{4}$. Thus, for the first layer ($h=1$) and for any node $j$, we have
\begin{equation*}
|f_j^{(1)}(x) - f_j^{(1)}(y)| = |\sigma(w^T x + b) - \sigma(w^T y + b)| \leq \frac{1}{4}|w^T(x-y)|\leq \frac{1}{4}\|w\|_2\|x-y\|_2 \leq \frac{B}{4}\|x-y\|_2.
\end{equation*}
Boundedness is obvious: $|f^{(1)}(x) - f^{(1)}(y)| \leq 2$.

Now assume the bounds hold for all nodes in layer $h$, where $1 \leq h < L-1$. For node $j$ in layer $h+1$, and for any $x,y \in \mathbb{R}^p$, we have
\begin{align}
\left|f_j^{(h+1)}(x) - f_j^{(h+1)}(y)\right| & = \left|\text{ReLU}\left(\sum\limits_{i=1}^{d^{(h)}} w_i f^{(h)}_i(x)\right) - \text{ReLU}\left(\sum\limits_{i=1}^{d^{(h)}} w_i f^{(h)}_i(y)\right)\right|  \notag  \\
&\leq \left|\sum\limits_{i=1}^{d^{(h)}} w_i f^{(h)}_i(x) - \sum\limits_{i=1}^{d^{(h)}} w_i f^{(h)}_i(y)\right| \notag \\
& \leq \|w\|_1 \|f^{(h)}(x) - f^{(h)}(y)\|_\infty \notag \\ 
& \leq \max_i \left|f_i^{(h)}(x) - f_i^{(h)}(y)\right| \label{lem1.4}   \\
& \leq \frac{B}{4}\|x-y\|_2 \notag,
\end{align}  
where the first inequality holds because $|\text{ReLU}(u) - \text{ReLU}(v)| \leq |u-v|$, the second inequality uses Holder's inequality, the third inequality holds because $\|w\|_1 \leq 1$ for layers higher than 1, and the last inequality holds by the inductive hypothesis. From the expression in inequality~\eqref{lem1.4} and the inductive hypothesis, we also have $|f_j^{(h+1)}(x) - f_j^{(h+1)}(y)| \leq 2$.

For the last layer, note that
\begin{equation*}
\left|f_j^{(L)}(x) - f_j^{(L)}(y)\right| =  \left|\sum\limits_{i=1}^{d^{(L-1)}} w_i f^{(L-1)}_i(x) - \sum\limits_{i=1}^{d^{(L-1)}} w_i f^{(L-1)}_i(y)\right|,
\end{equation*}
so the same argument used above and the result for $h = L-1$ gives the desired conclusion.
\end{proof}

The same argument used in the proof of Lemma~\ref{LemNNLip} leads to the following corollary:

\begin{corollary} 
\label{CorNNLip}
For any $f^{(L)} \in \mathcal{D}_{sp}(B, L, k)$ and any $x, y \in \mathbb{R}^p$, we have $|f^{(L)}(x) - f^{(L)}(y)| \leq \frac{B}{4}\|x - y\|_2$ and $|f^{(L)}(x) - f^{(L)}(y)| \leq 2$.
\end{corollary}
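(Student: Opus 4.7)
The plan is to run the identical induction already carried out in Lemma~\ref{LemNNLip}, observing that the sparse function class $\mathcal{D}_{sp}(B,L,k)$ differs from $\mathcal{D}(B,L)$ only in the first layer, where the weight vectors satisfy the additional constraint $\|w\|_0 \leq k$ on top of $\|w\|_2 \leq B$. Since this extra sparsity restriction still preserves $\|w\|_2 \leq B$, the bound on the first layer used in the proof of Lemma~\ref{LemNNLip}---namely $|\sigma(w^T x + b) - \sigma(w^T y + b)| \leq \tfrac{1}{4}|w^T(x-y)| \leq \tfrac{B}{4}\|x-y\|_2$, obtained from $|\sigma'| \leq \tfrac{1}{4}$ and Cauchy--Schwarz---continues to hold verbatim, as does the trivial $|f_j^{(1)}(x) - f_j^{(1)}(y)| \leq 2$ from $\sigma \in [0,1]$.

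The inductive step for intermediate layers $1 \le h < L-1$ is completely unchanged, since those layers are defined identically for $\mathcal{D}(B,L)$ and $\mathcal{D}_{sp}(B,L,k)$ by equation~\eqref{func2}. So I would simply rewrite: for any node $j$ in layer $h+1$,
\[
\left|f_j^{(h+1)}(x) - f_j^{(h+1)}(y)\right| \leq \left|\sum_{i} w_i \bigl(f_i^{(h)}(x) - f_i^{(h)}(y)\bigr)\right| \leq \|w\|_1 \cdot \max_i |f_i^{(h)}(x) - f_i^{(h)}(y)|,
\]
using 1-Lipschitzness of $\mathrm{ReLU}$ and Hölder, then invoke $\|w\|_1 \leq 1$ and the inductive hypothesis to inherit both the $\tfrac{B}{4}\|x-y\|_2$ bound and the bound by $2$. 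The output layer (equation~\eqref{func3}) is handled identically since it has no $\mathrm{ReLU}$ but still satisfies $\|w\|_1 \leq 1$.

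There is essentially no obstacle here: the corollary is exactly the observation that the Lipschitz and boundedness arguments used in Lemma~\ref{LemNNLip} nowhere made use of the density pattern of the first-layer weights, only of $\|w\|_2 \leq B$. The sparsity constraint $\|w\|_0 \leq k$ simply cuts the class down, so the uniform Lipschitz constant and uniform bound on oscillation carry over without any loss. In consequence, the short way to present the proof is simply to note that the argument of Lemma~\ref{LemNNLip} applies verbatim, since the only property of the first-layer weight set used there was the $\ell_2$-norm constraint, which is preserved under the additional sparsity constraint defining $\tilde{\mathcal{D}}^{(1)}$ in equation~\eqref{func4}.
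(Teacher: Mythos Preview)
Your proposal is correct and follows exactly the paper's approach: the paper simply states that ``the same argument used in the proof of Lemma~\ref{LemNNLip} leads to the following corollary,'' and your observation that the sparsity constraint $\|w\|_0 \le k$ only shrinks the first-layer weight set while preserving $\|w\|_2 \le B$ is precisely the point.
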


Next, we provide uniform concentration bounds for the two function classes, which play a key role in deriving the main theorems for the upper bounds. The proofs are contained in Appendices~\ref{AppLemConcentration} and~\ref{AppCorConcentration}.


\begin{lemma}
\label{LemConcentration}
Assume $X_1, \dots, X_n \in \mathbb{R}^p$ are i.i.d.\ random vectors following any distribution $P$. For any $\delta > 0$, we have
\begin{equation*}
\sup_{D \in \mathcal{D}(B,L)} \left|\frac{1}{n}\sum_{i=1}^{n} D(X_i) - \mathbb{\E}D(X)\right| \leq C\left(2^{L}\sqrt{\frac{p}{n}} + \sqrt{\frac{2\log(1/\delta)}{n}}\right),
\end{equation*}
with probability at least $1 - \delta$, where $C > 0$ is a universal constant.
\end{lemma}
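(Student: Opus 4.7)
My approach combines a concentration inequality for the supremum with a Rademacher-complexity bound obtained by peeling off the network's layers, which is a standard empirical-process technique adapted to the multilayer structure of $\mathcal{D}(B,L)$.

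First, I would observe that every $f \in \mathcal{D}(B,L)$ is uniformly bounded by $1$: the sigmoid first layer outputs in $(0,1)$, each subsequent hidden layer with $\|w\|_1 \le 1$ preserves the range $[0,1]$ after ReLU, and the final linear output layer with $\|w\|_1 \le 1$ lands in $[-1,1]$. This uniform bound (consistent with Lemma \ref{LemNNLip}) implies that
$$G(x_1,\ldots,x_n) := \sup_{f\in\mathcal{D}(B,L)} \Bigl|\tfrac{1}{n}\sum_{i=1}^n f(x_i) - \mathbb{E}f(X)\Bigr|$$
changes by at most $2/n$ when any single coordinate is modified. McDiarmid's bounded-differences inequality then yields $G \le \mathbb{E}G + \sqrt{2\log(1/\delta)/n}$ with probability at least $1-\delta$, so it suffices to prove $\mathbb{E}G \lesssim 2^L\sqrt{p/n}$.

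Second, by the standard symmetrization inequality $\mathbb{E}G \le 2\mathcal{R}_n(\mathcal{D}(B,L))$, where $\mathcal{R}_n$ denotes the Rademacher complexity, and I would control this by peeling layers. The output layer writes $f^{(L)} = \sum_i w_i f^{(L-1)}_i$ with $\|w\|_1 \le 1$, placing $\mathcal{D}(B,L)$ inside the absolute convex hull of $\mathcal{D}^{(L-1)}$; since Rademacher complexity is invariant under absolute convex hulls, $\mathcal{R}_n(\mathcal{D}(B,L)) = \mathcal{R}_n(\mathcal{D}^{(L-1)})$. For each hidden layer, the same convex-hull step combined with Talagrand's contraction inequality applied to the $1$-Lipschitz map ReLU (which vanishes at $0$) gives $\mathcal{R}_n(\mathcal{D}^{(l+1)}) \le 2\,\mathcal{R}_n(\mathcal{D}^{(l)})$. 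Iterating through the $L-2$ hidden layers accumulates a factor of $2^{L-2}$, so that $\mathcal{R}_n(\mathcal{D}(B,L)) \le 2^{L-2}\,\mathcal{R}_n(\mathcal{D}^{(1)})$.

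Third, for the first sigmoid layer $\mathcal{D}^{(1)} = \{\sigma(w^Tx + b) : \|w\|_2 \le B,\, b \in \mathbb{R}\}$ I would show $\mathcal{R}_n(\mathcal{D}^{(1)}) \lesssim \sqrt{p/n}$. Since this class is parametrized by $p+1$ real parameters and takes values in $[0,1]$, its pseudo-dimension is $O(p)$, and Dudley's entropy integral yields the claimed rate without any moment assumption on $P$; an alternative route using the $\tfrac14$-Lipschitz contraction of $\sigma$ to reduce to the linear class $\{w^Tx : \|w\|_2 \le B\}$ gives the same $B\sqrt{\mathbb{E}\|X\|_2^2/n}$ bound whenever a second-moment condition is available. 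Combining the three steps gives $\mathbb{E}G \lesssim 2^L\sqrt{p/n}$, as required. I expect the main obstacle to be the careful bookkeeping in the peeling step, in particular ensuring that each hidden ReLU layer incurs exactly one multiplicative factor of $2$ under Talagrand's sharp contraction (so $L$ layers produce $2^L$ rather than a larger power), together with a secondary subtlety: the unbounded bias $b$ in the first layer makes the Lipschitz-reduction route to $\mathcal{R}_n(\mathcal{D}^{(1)})$ delicate, which is why the pseudo-dimension argument is preferable given the hypothesis of an arbitrary distribution $P$.
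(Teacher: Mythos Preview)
Your proposal is correct and follows essentially the same route as the paper: McDiarmid's inequality via the bounded-difference property, symmetrization, layer-by-layer peeling using Talagrand's contraction for ReLU together with the $\|w\|_1\le 1$ constraint, and finally a VC/pseudo-dimension bound of order $p$ on $\mathcal{D}^{(1)}$ plugged into Dudley's entropy integral. The only minor discrepancy is your claim that the output layer contributes no factor because ``Rademacher complexity is invariant under absolute convex hulls'': since $\mathcal{D}^{(L-1)}$ is not symmetric, passing to the absolute convex hull costs a factor of $2$ (exactly the argument the paper spells out), so the peeling yields $2^{L-1}$ rather than $2^{L-2}$, but this does not affect the stated $O(2^{L})$ bound.
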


\begin{corollary}
\label{CorConcentration}
Assume $X_1, \dots, X_n \in \mathbb{R}^p$ are i.i.d.\ random vectors following any distribution $P$. For any $\delta > 0$, we have
\begin{equation*}
\sup_{D \in \mathcal{D}_{sp}(B, L, k)} \left|\frac{1}{n}\sum_{i=1}^{n} D(X_i) - \mathbb{\E}D(X)\right| \leq C\left(2^{L}\sqrt{\frac{k\log\frac{p}{k}}{n}} + \sqrt{\frac{2\log(1/\delta)}{n}}\right),
\end{equation*}
with probability at least $1 - \delta$, where $C > 0$ is a universal constant.
\end{corollary}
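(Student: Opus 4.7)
The plan is to mirror the proof of Lemma~\ref{LemConcentration} and change only the ingredient that controls the Rademacher complexity of the first layer, inserting a sparse analogue in place of the dense bound. I would proceed in three stages: first, reduce the uniform deviation to its expectation via a bounded-differences argument; second, peel off the hidden and output layers using Talagrand's contraction; third, bound the Rademacher complexity of the sparse first layer.

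For the first stage, every $D \in \mathcal{D}_{sp}(B, L, k)$ is uniformly bounded in magnitude: the first-layer outputs satisfy $\sigma(w^T x + b) \in [0,1]$, and every subsequent layer is a ReLU of a convex combination (since $\sum_i |w_i| \leq 1$), so the output stays in $[0,1]$. Consequently the function $(X_1, \ldots, X_n) \mapsto \sup_{D} |\tfrac{1}{n}\sum_{i=1}^n D(X_i) - \mathbb{E}D(X)|$ has bounded differences of order $O(1/n)$, and McDiarmid's inequality contributes the tail term $\sqrt{2 \log(1/\delta)/n}$. Standard symmetrization then bounds the expected supremum by $2\,\mathcal{R}_n(\mathcal{D}_{sp}(B,L,k))$. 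For the second stage, I would apply Talagrand's contraction lemma inductively across layers $l = L, L-1, \ldots, 2$: the ReLU is $1$-Lipschitz, and the $\ell_1$-constraint $\sum_i |w_i| \leq 1$ together with the fact that the Rademacher complexity of a convex hull equals that of its extreme points gives $\mathcal{R}_n(\mathcal{D}^{(l+1)}) \leq 2\,\mathcal{R}_n(\mathcal{D}^{(l)})$. This is identical to the dense case and yields
\[
\mathcal{R}_n(\mathcal{D}_{sp}(B, L, k)) \;\lesssim\; 2^{L-1}\, \mathcal{R}_n(\tilde{\mathcal{D}}^{(1)}).
\]

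The third stage is the main novelty relative to Lemma~\ref{LemConcentration}. I would decompose the first-layer class by the support of $w$: for each $S \subseteq [p]$ with $|S| = k$, the subclass $\tilde{\mathcal{D}}^{(1)}_S$ with $\mathrm{supp}(w) \subseteq S$ is a dense sigmoid layer on $\mathbb{R}^k$, so Talagrand's contraction applied to the $1/4$-Lipschitz sigmoid followed by the usual Rademacher bound for $\ell_2$-constrained linear classes gives Rademacher complexity $O(B\sqrt{k/n})$. Combining over the $\binom{p}{k} \leq (ep/k)^k$ choices of $S$ via a sub-Gaussian union bound — each per-support Rademacher sum concentrates at rate $\sqrt{1/n}$ because the summands are bounded — yields
\[
\mathcal{R}_n(\tilde{\mathcal{D}}^{(1)}) \;\lesssim\; B\sqrt{\frac{k}{n}} + B\sqrt{\frac{\log\binom{p}{k}}{n}} \;\lesssim\; B\sqrt{\frac{k \log(p/k)}{n}}.
\]
Stringing the three stages together and absorbing $B$ into the universal constant $C$ gives the stated bound.

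The hardest step is making the combination over the $\binom{p}{k}$ supports fully rigorous while tracking the unbounded bias $b$ through the contraction; a clean alternative that avoids a naive union bound is a Dudley entropy-integral argument applied to the empirical $L^2$-metric of the sparse affine class $\{x \mapsto w^T x + b : \|w\|_0 \leq k,\, \|w\|_2 \leq B\}$, where sparsity enters only through the covering-number factor $\log\binom{p}{k} \asymp k\log(p/k)$, producing the same rate while sidestepping issues with unboundedness of $b$.
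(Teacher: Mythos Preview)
Your first two stages (McDiarmid plus symmetrization, then peeling off the hidden and output layers via contraction) are fine and match the paper exactly. The gap is in your third stage. Once you strip the sigmoid by Talagrand's contraction, you are left with the class $\{x \mapsto w^T x + b : \|w\|_2 \le B,\ b \in \mathbb{R}\}$ on the chosen support. This class has \emph{infinite} Rademacher complexity because $b$ is unconstrained (whenever $\sum_i \epsilon_i \neq 0$ the supremum over $b$ diverges). Even if you drop $b$, the Rademacher complexity of the $\ell_2$-ball of linear functionals is $\tfrac{B}{n}\,\mathbb{E}\bigl\|\sum_i \epsilon_i X_{i,S}\bigr\|_2$, which scales with $\sqrt{\mathbb{E}\|X_S\|_2^2/n}$, not with $\sqrt{k/n}$; since the corollary is stated for an \emph{arbitrary} distribution $P$ with no moment assumptions, the bound $O(B\sqrt{k/n})$ is not available. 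Your fallback, ``Dudley on the sparse affine class,'' inherits the same problem: that class has infinite diameter in empirical $L^2$, so the entropy integral does not converge.

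The paper avoids both issues by never removing the sigmoid from the first layer. It applies Dudley's entropy integral directly to $\tilde{\mathcal{D}}^{(1)}$ (which takes values in $[0,1]$, so the diameter is $\le 1$ regardless of $P$) and bounds the covering number through the VC dimension of the sparse sigmoid class. The key combinatorial step is a Sauer-type bound for a union of VC classes: writing $\tilde{\mathcal{D}}^{(1)}$ as a union over the $\binom{p}{k}$ possible supports, each sub-class has VC dimension $O(k)$ (since $\sigma$ is monotone, its level sets are half-spaces in $\mathbb{R}^k$), and the union therefore has VC dimension $O(k\log(p/k))$. Plugging this into the Haussler covering-number bound and the Dudley integral gives $\mathcal{R}_n(\tilde{\mathcal{D}}^{(1)}) \lesssim \sqrt{k\log(p/k)/n}$ with no dependence on $B$ or on the distribution of $X$.
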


Note that the bounds in Lemma~\ref{LemConcentration} and Corollary~\ref{CorConcentration} do not depend on $B$, since we only need the boundedness of the function classes, rather than the Lipschitz property, to complete the proofs.


\subsection{Proof of Lemma~\ref{LemConcentration}}
\label{AppLemConcentration}
For simplicity, we use $\mathcal{D}$ to denote $\mathcal{D}(B, L)$. Let 
$$f(x_1, \dots, x_n) = \sup_{D \in \mathcal{D}} \left|\frac{1}{n}\sum_{i=1}^{n} D(x_i) - \mathbb{E}D(X)\right|.$$ 
For a given $(x_1, \dots, x_j \dots, x_n)$, assume the supremum is attained at $D^*$. Then
\begin{equation*}
f(x_1, \dots, x_j \dots, x_n) =  \left|\frac{1}{n}\sum_{i=1}^{n} D^*(x_i) - \mathbb{E}D^*(X)\right|,
\end{equation*}
and for any  $(x_1, \dots, x_j' \dots, x_n)$, we have
\begin{align*}
& f(x_1, \dots, x_j \dots, x_n) - f(x_1, \dots, x_j' \dots, x_n)  \\
& \qquad =  \left|\frac{1}{n}\sum_{i=1}^{n} D^*(x_i) - \mathbb{E}D^*(X)\right| -  \sup_{D \in \mathcal{D}} \left|\frac{1}{n}\sum_{i=1, i\neq j}^{n} D(x_i) + \frac{1}{n}D(x_j') - \mathbb{E}D(X)\right| \\
& \qquad \leq \left|\frac{1}{n}\sum_{i=1}^{n} D^*(x_i) - \mathbb{E}D^*(X)\right| -  \left|\frac{1}{n}\sum_{i=1, i\neq j}^{n} D^*(x_i) + \frac{1}{n}D^*(x_j') - \mathbb{E}D^*(X)\right| \\
& \qquad \leq \left|\left(\frac{1}{n}\sum_{i=1}^{n} D^*(x_i) - \mathbb{E}D^*(X)\right) - \left(\frac{1}{n}\sum_{i=1, i\neq j}^{n} D^*(x_i) + \frac{1}{n}D^*(x_j') - \mathbb{E}D^*(X)\right)\right| \\
& \qquad = \left|\frac{1}{n}D^*(x_j) - \frac{1}{n}D^*(x_j')\right|\\
& \qquad \leq   \frac{2}{n},
\end{align*}
where the second inequality comes from the triangle inequality and the last inequality follows from Lemma~\ref{LemNNLip}. Similarly, we have
\begin{equation*}
f(x_1, \dots, x_j' \dots, x_n) - f(x_1, \dots, x_j \dots, x_n) \leq \frac{2}{n}.
\end{equation*}
Thus, by McDiarmid's inequality, we have 
\begin{equation}
f(X_1, \dots, X_n) \leq \mathbb{E} f(X_1, \dots, X_n) + \sqrt{\frac{2\log(1/\delta)}{n}}, \label{eqndiarmid}
\end{equation}
with probability at least $1-\delta$.

Furthermore, by the symmetrization technique~\citep{Ver18}, we have the bound
\begin{equation}
\mathbb{E}f(X_1, \dots, X_n) \leq 2 \mathbb{E} \sup_{D \in \mathcal{D}} \left| \frac{1}{n} \sum_{i=1}^{n} \epsilon_i D(X_i)\right|, \label{eqnsymm}
\end{equation}
where the $\epsilon_i$'s are independent Rademacher random variables. Note that since the nodes in the last layer do not involve an activation function, the function class $\mathcal{D}$ is symmetric. Thus,
\begin{equation}
\mathbb{E} \sup_{D \in \mathcal{D}} \left| \frac{1}{n} \sum_{i=1}^{n} \epsilon_i D(X_i)\right| = \mathbb{E} \sup_{D \in \mathcal{D}} \frac{1}{n} \sum_{i=1}^{n} \epsilon_i D(X_i). \label{eqnnoabs}
\end{equation}

For $1 < l < L-1$, consider the function class $\mathcal{D}^{(l+1)}$, defined in~\eqref{func2}, and let $b^{(l)}$ denote the width of layer $l$. Then we have 
\begin{align*}
\mathbb{E} \sup_{D \in \mathcal{D}^{(l+1)}} \frac{1}{n} \sum_{i=1}^{n} \epsilon_i D(X_i)
& = \mathbb{E} \sup_{\{D_j\} \subseteq \mathcal{D}^{(l)}, \|w\|_1 \le 1} \frac{1}{n} \sum_{i=1}^{n} \epsilon_i \text{ReLU}\left(\sum_{j=1}^{b^{(l)}} w_j D_j(X_i)\right) \\
& \leq \mathbb{E} \sup_{\{D_j\} \subseteq \mathcal{D}^{(l)}, \|w\|_1 \le 1} \frac{1}{n} \sum_{i=1}^{n} \epsilon_i \left(\sum_{j=1}^{b^{(l)}} w_jD_j(X_i) \right) \\
& = \mathbb{E} \sup_{\{D_j\} \subseteq \mathcal{D}^{(l)}, \|w\|_1 \le 1} \sum_{j=1}^{b^{(l)}} w_j \left( \frac{1}{n} \sum_{i=1}^{n} \epsilon_i D_j(X_i)\right)  \\
& \leq \mathbb{E} \sup_{\{D_j\} \subseteq \mathcal{D}^{(l)}} \max_{1 \le j \le b^{(l)}} \left|\frac{1}{n} \sum_{i=1}^{n} \epsilon_i D_j(X_i)\right|  \\
& = \mathbb{E} \sup_{D \in \mathcal{D}^{(l)}} \left|\frac{1}{n} \sum_{i=1}^{n} \epsilon_i D(X_i)\right|  \\
& \leq 2  \mathbb{E} \sup_{D \in \mathcal{D}^{(l)}} \frac{1}{n} \sum_{i=1}^{n} \epsilon_i D(X_i),
\end{align*}
where the first inequality comes from Talagrand’s contraction lemma \citep{Wai19}, since the ReLU function is Lipschitz-1 continuous; and the second inequality is from Holder's inequality with $p=1$ and $q=\infty$. The third inequality can be argued as follows:
let $A = \sup_{D \in \mathcal{D}^{(l)}} \left|\frac{1}{n} \sum_{i=1}^{n} \epsilon_i D(X_i)\right| = \sup_{D \in \mathcal{D}^{(l)}} \left|\frac{1}{n} \sum_{i=1}^{n} -\epsilon_i D(X_i)\right| $, and
note that $\sup_{D \in \mathcal{D}^{(l)}} \frac{1}{n} \sum_{i=1}^{n} \epsilon_i D(X_i) \geq 0$ and $\sup_{D \in \mathcal{D}^{(l)}} \frac{1}{n} \sum_{i=1}^{n} -\epsilon_i D(X_i) \geq 0$. 
Thus,
\begin{equation*}
\sup_{D \in \mathcal{D}^{(l)}} \frac{1}{n} \sum_{i=1}^{n} \epsilon_i D(X_i) + \sup_{D \in \mathcal{D}^{(l)}} \frac{1}{n} \sum_{i=1}^{n} -\epsilon_i D(X_i) \geq A.
\end{equation*}
Taking an expectation with respect to $\epsilon_i$ and noting that
\begin{equation*}
\E \left[\sup_{D \in \mathcal{D}^{(l)}} \frac{1}{n} \sum_{i=1}^{n} \epsilon_i D(X_i)\right] = \E\left[\sup_{D \in \mathcal{D}^{(l)}} \frac{1}{n} \sum_{i=1}^{n} -\epsilon_i D(X_i)\right]
\end{equation*}
by symmetry of the Rademacher distribution, we obtain the desired inequality.
Note that from the proof above, it is obvious that this string of inequalities also holds for the last layer (without ReLU functions).
Iterating this argument, we have
\begin{equation}
\label{EqnOrange}
\mathbb{E} \sup_{D \in \mathcal{D}} \frac{1}{n} \sum_{i=1}^{n} \epsilon_i D(X_i) \leq 2^{L-1}\mathbb{E} \sup_{D \in \mathcal{D}^{(1)}} \frac{1}{n} \sum_{i=1}^{n} \epsilon_i D(X_i),
\end{equation}
where $\mathcal{D}^{(1)}$ is the first-layer function class defined in equation~\eqref{func1}.

The final Rademacher complexity can be bounded by Dudley's integral entropy bound, which gives
\begin{equation}
\mathbb{E} \sup_{D \in \mathcal{D}^{(1)}} \frac{1}{n} \sum_{i=1}^{n} \epsilon_i D(X_i) \lesssim \mathbb{E} \frac{1}{\sqrt{n}} \int_0^2 \sqrt{\log \mathcal{N}(\delta, \mathcal{D}^{(1)}, \|\cdot\|_n)}d\delta,  \label{lemma2-p1}
\end{equation}
where $ \mathcal{N}(\delta, \mathcal{D}^1, \|\cdot\|_n)$ is the $\delta$-covering number of $\mathcal{D}^{(1)}$ with respect to the empirical $\ell_2$-distance. Since the VC dimension of $\mathcal{D}^{(1)}$ is $O(p)$ (cf.\ Example 4.21 in~\cite{Wai19}), we have $ \mathcal{N}(\delta, \mathcal{D}^{(1)}, \|\cdot\|_n) \lesssim p(16e/\delta)^{O(p)}$~\citep{VanWel96, Wai19}. This leads to the bound
\begin{equation}
\label{EqnBanana}
\frac{1}{\sqrt{n}} \int_0^2 \sqrt{\log \mathcal{N}(\delta, \mathcal{D}^{(1)}, \|\cdot\|_n)}d\delta \lesssim \sqrt{\frac{p}{n}}.
\end{equation}

Combining inequalities~\eqref{eqndiarmid}, \eqref{eqnsymm}, \eqref{eqnnoabs}, \eqref{EqnOrange}, \eqref{lemma2-p1}, and~\eqref{EqnBanana}, we arrive at the desired conclusion.



\subsection{Proof of Corollary~\ref{CorConcentration}}
\label{AppCorConcentration}

Note that with the new function class, by Corollary~\ref{CorNNLip}, the proof in Lemma~\ref{LemConcentration} still holds until (including) inequality~\eqref{lemma2-p1}. Then we need a new bound for the $\delta$-covering number for function class \eqref{func4}. 
Recall that the VC dimension of a real-valued function class is defined to be the VC dimension of the associated set class, as follows: For the function class $\mathcal{G} = \{g: \mathcal{X} \rightarrow \mathbb{R}\}$, the subgraph at level 0 for the function $g$ is defined as $S_g := \{x \in \mathcal{X} | g(x) \leq 0\}$, and the set class associated with $\mathcal{G}$ is defined as $\mathcal{S}(\mathcal{G}) := \{S_g, g \in \mathcal{G}\}$ (cf.\ Section 4.3.3 in \cite{Wai19}).

We first derive the following result:
\begin{lemma}
\label{VCdim_lemma}
The VC dimensions of the function classes~\eqref{func4} and~\eqref{func6} are bounded by $O(k\log\frac{p}{k})$ and $O(\max(k, \log p + k\log\frac{\log p}{k}))$, respectively.
\end{lemma}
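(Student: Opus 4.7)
My approach is to bound both VC dimensions by reducing each real-valued class to a union of affine halfspace classes, and then applying the standard VC bound for unions. The first observation is that since $\sigma$ is strictly monotone, for $t\in(0,1)$ we have $\{x:\sigma(w^Tx+b)>t\}=\{x:w^Tx+b>\sigma^{-1}(t)\}$, while for $t\notin(0,1)$ the subgraph slice is either empty or all of $\mathbb{R}^p$. Consequently, the VC subgraph dimension of each sigmoid class coincides with the VC dimension of the class of affine halfspaces on $\mathbb{R}^p$ with the corresponding structural constraint on $w$. Note that the $\ell_2$-constraint $\|w\|_2\le B$ is irrelevant, since rescaling $(w,b)$ preserves the halfspace.

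For class \eqref{func4} I would decompose the induced halfspace class as the union $\bigcup_{S\subseteq[p],\,|S|\le k}\mathcal{H}_S$, where $\mathcal{H}_S$ is the class of affine halfspaces with coefficients supported on $S$; each $\mathcal{H}_S$ has VC dimension at most $k+1$, and the number of admissible supports is $\binom{p}{\le k}\le (ep/k)^k$. By Sauer--Shelah together with the union-over-classes bound, the growth function of the union satisfies
\[
\Pi(n)\;\le\;\binom{p}{\le k}\cdot\binom{n}{\le k+1}\;\le\;(ep/k)^k\,(en/(k+1))^{k+1}.
\]
The VC dimension is the largest $n$ for which $\Pi(n)\ge 2^n$; taking logarithms yields the implicit inequality $n\log 2\le k\log(ep/k)+(k+1)\log(en/(k+1))$, whose solution is $n=O(k\log(p/k))$.

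For class \eqref{func6} I would proceed identically, but now the support is required to be a contiguous block of length $2k$, so there are at most $p-2k+1\le p$ admissible supports. Hence $\Pi(n)\le p\cdot(en/(2k+1))^{2k+1}$, and the defining condition $\Pi(n)\ge 2^n$ becomes $n\log 2\le \log p+(2k+1)\log(en/(2k+1))$. To solve this I would split into regimes: if $k\gtrsim \log p$, the $(2k+1)\log(n/k)$ term dominates and gives $n=O(k)$; if $k\lesssim \log p$, the $\log p$ term dominates at first iteration, and substituting $n\asymp\log p$ back into the $k\log(n/k)$ term and iterating once produces the refined bound $n=O(\log p+k\log((\log p)/k))$. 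The union of these two regimes yields the claimed $O(\max(k,\log p+k\log((\log p)/k)))$.

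The principal obstacle will be the implicit inequality in the contiguous case, specifically in the intermediate regime where $k$ and $\log p$ are of comparable order: there the $\log p$ contribution and the $k\log(n/k)$ contribution are balanced, and one must iterate the bound once (as sketched above) to extract the $\log((\log p)/k)$ factor cleanly. The sparse case in Step~2 is entirely routine given the union argument, as is the reduction from sigmoid subgraphs to affine halfspaces in Step~1.
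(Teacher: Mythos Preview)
Your proposal is correct and follows essentially the same approach as the paper: both reduce to a union of affine halfspace classes indexed by the admissible supports, apply Sauer--Shelah to each component, sum the growth functions, and solve the resulting implicit inequality $2^D \le r(eD/d)^d$. The only presentational difference is that the paper first derives the general bound $D=O(\max\{d,\log r + d\log(\log r/d)\})$ for a union of $r$ classes each of VC dimension $d$, and then plugs in $(d,r)=(O(k),\binom{p}{k})$ and $(d,r)=(O(k),p)$ respectively, whereas you carry out the case split directly for class~\eqref{func6}; your ``iterate once'' step is exactly how one obtains the $d\log(\log r/d)$ term in the paper's formula.
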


\begin{proof}
Consider a finite union of function classes, $\mathcal{H} = \cup_{i=1,\dots, r}\mathcal{H}_i$, where the VC dimension of each $\mathcal{H}_i$ is $d$. By Sauer's theorem (cf.\ Theorem 4.18 of \cite{Wai19}), the growth function $\Gamma_{\mathcal{H}_i}(m)$ is bounded by 
$$ \Gamma_{\mathcal{H}_i}(m) \leq \sum_{i=0}^{d} {m\choose i}.$$
Thus, the growth function of $\mathcal{H}$ is bounded by 
$$ \Gamma_{\mathcal{H}}(m) \leq r\sum_{i=0}^{d} {m\choose i}.$$
Denote the VC dimension of $\mathcal{H}$ by $D$. By definition, $\Gamma_\mathcal{H}(D)=2^D$, so we have 
$$ 2^D \leq r\sum_{i=0}^{d} {D\choose i} \leq r \left(\frac{eD}{d}\right)^d.$$
Thus, $D \leq O\left(\max\left\{d, \log r + d\log\frac{\log r}{d}\right\}\right)$.

For the first-layer function classes~\eqref{func4} and~\eqref{func6}, each function class in the union has VC dimension $O(k-1)$, and $r$ is $\binom{p}{k} = O\left(\left(\frac{ep}{k}\right)^k\right)$ and $O(p)$, respectively. Thus, we have the VC bounds $O(k+k\log\frac{p}{k})=O(k\log\frac{p}{k})$ and $O\left(\max\{k, \log p + k\log\frac{\log p}{k}\}\right)$.
\end{proof}

By Lemma~\ref{VCdim_lemma}, the VC dimension for the function class \eqref{func4} is $O(k\log\frac{p}{k})$. Thus, following the same lines of proof for Lemma~\ref{LemConcentration}, we obtain the desired result.


\subsection{Proof of Theorem~\ref{Upper1}}
\label{AppUpper1}

We first bound $\sup_{f \in \mathcal{D}} \E_{P_{\thetastar}}f(X) - \E_{P_{\widehat{\theta}}}f(X)$:
\begin{align*}
\sup_{f \in \mathcal{D}} \E_{P_{\thetastar}}f(X) - \E_{P_{\widehat{\theta}}}f(X) & = \sup_{f \in \mathcal{D}} \left(\E_{P_{\thetastar}}f(X) - \E_{P}f(X)\right) + \left(\E_{P}f(X)- \E_{P_{\widehat{\theta}}}f(X)\right) \\
& \leq  \sup_{f \in \mathcal{D}} \left(\E_{P_{\thetastar}}f(X) - \E_{P}f(X)\right) +  \sup_{f \in \mathcal{D}} \left(\E_{P}f(X)- \E_{P_{\widehat{\theta}}}f(X)\right).
\end{align*}
By Lemma~\ref{LemNNLip} and the duality form of the Wasserstein distance~\eqref{Wgan}, this is further bounded by
\begin{align}
& \frac{B}{4} W(P_{\thetastar}, P) + \sup_{f \in \mathcal{D}} \left(\E_{P}f(X)- \E_{P_{\widehat{\theta}}}f(X)\right) \notag \\
& \qquad \leq \sup_{f \in \mathcal{D}} \left(\E_{P}f(X)- \E_{P_{\widehat{\theta}}}f(X)\right)  + \frac{B}{4}\epsilon \notag \\
& \qquad \leq \sup_{f \in \mathcal{D}} \left(\E_{P}f(X) - \E_{P_n}f(X)\right) +  \sup_{f \in \mathcal{D}} \left(\E_{P_n}f(X)- \E_{P_{\widehat{\theta}}}f(X)\right) + \frac{B}{4}\epsilon \notag \\
& \qquad \leq \sup_{f \in \mathcal{D}} \left(\E_{P_n}f(X)- \E_{P_{\widehat{\theta}}}f(X)\right)  + C\left(2^{L}\sqrt{\frac{p}{n}} + \sqrt{\frac{2\log(1/\delta)}{n}}\right) + \frac{B}{4}\epsilon,\label{lemma2-1}
\end{align}
where the last inequality holds by Lemma~\ref{LemConcentration}, with probability at least $1-\delta$.

By the optimality of $\widehat{\theta}$, the right-hand side of inequality~\eqref{lemma2-1} is further bounded by
\begin{align}
& \sup_{f \in \mathcal{D}} \left(\E_{P_n}f(X)- \E_{P_{\thetastar}}f(X)\right)  + C\left(2^{L}\sqrt{\frac{p}{n}} + \sqrt{\frac{2\log(1/\delta)}{n}}\right) + \frac{B}{4}\epsilon \notag \\
& \qquad \leq \sup_{f \in \mathcal{D}} \left(\E_{P_n}f(X)- \E_{P}f(X)\right)  + \sup_{f \in \mathcal{D}} \left(\E_{P}f(X)- \E_{P_{\thetastar}}f(X)\right) \notag \\
& \qquad \qquad + C\left(2^{L}\sqrt{\frac{p}{n}} + \sqrt{\frac{2\log(1/\delta)}{n}}\right) + \frac{B}{4}\epsilon \notag \\
& \qquad \leq 2 C\left(2^{L}\sqrt{\frac{p}{n}} + \sqrt{\frac{2\log(1/\delta)}{n}}\right)+ \frac{B}{2}\epsilon, \label{lemma2-2}
\end{align}
again applying the bound in Lemma~\ref{LemConcentration} and the duality form of the Wasserstein distance.

Note that for any $f \in \mathcal{D}$, we also have $-f \in \mathcal{D}$, so we similarly obtain
$$\sup_{f \in \mathcal{D}} \left(- \E_{P_{\thetastar}}f(X) + \E_{P_{\widehat{\theta}}}f(X)\right) \leq 2C\left(2^{L}\sqrt{\frac{p}{n}} + \sqrt{\frac{2\log(1/\delta)}{n}}\right)+ \frac{B}{2} \epsilon,$$
with probability at least $1-\delta$. Thus,
$$\sup_{f \in \mathcal{D}} \left|\E_{P_{\thetastar}}f(X) - \E_{P_{\widehat{\theta}}}f(X)\right| \leq 2C\left(2^{L}\sqrt{\frac{p}{n}} + \sqrt{\frac{2\log(1/\delta)}{n}}\right)+ \frac{B}{2} \epsilon,$$
with probability at least $1-\delta$.


\subsection{Proof of Theorem~\ref{Upper2}}
\label{AppUpper2}

Compared to Theorem~\ref{Upper1}, note that the only difference in this setting is the sparsity in the first layer of the neural network. This does not affect any lines in the proof of Theorem~\ref{Upper1}. The only change is in inequalities~\eqref{lemma2-1} and~\eqref{lemma2-2}, where the proof should invoke the uniform concentration inequality of Corollary~\ref{CorConcentration} rather than Lemma~\ref{LemConcentration}. This leads to the result of the theorem.


\subsection{Proof of Theorem~\ref{ThmModulus}}
\label{AppThmModulus}

Suppose $P_{\eta_1}$ and $P_{\eta_2}$ achieve the supremum loss among pairs of distributions separated by Wasserstein distance at most $\epsilon$. In other words, 
$$L(\eta_1, \eta_2) =  \sup_{W(P_{\theta_1}, P_{\theta_2})\leq \epsilon, \theta_1, \theta_2 \in \Theta} L({\theta_1}, {\theta_2}) = m(\epsilon, \Theta).$$ 
Let $P'=\frac{1}{2}P_{\eta_1} + \frac{1}{2}P_{\eta_2}$. By the duality form of the Wasserstein distance, we have
\begin{align*}
W(P', P_{\eta_1}) & = \sup_{\|f\|_L \leq 1} \left(\E_{P'} f(X) - \E_{P_{\eta_1}} f(X)\right) \\
& = \frac{1}{2}\sup_{\|f\|_L \leq 1} \left(\E_{P_{\eta_2}} f(X) - \E_{P_{\eta_1}} f(X)\right) \\
& = \frac{1}{2} W(P_{\eta_2}, P_{\eta_1}) \leq \epsilon. 
\end{align*}
Similarly, $W(P', P_{\eta_2}) \leq \epsilon$.

Thus, we have
\begin{align*}
& \sup_{\thetastar \in \Theta, P: W(P_{\thetastar}, P) \le \epsilon} \mprob(L(\thetastar, \thetahat) \ge \delta) \\
& \qquad \qquad \ge \frac{1}{2} \sup_{P: W(P_{\eta_1}, P) \le \epsilon} \mprob_{X_i \sim P} (L(\eta_1, \thetahat) \ge \delta) + \frac{1}{2} \sup_{P: W(P_{\eta_2}, P) \le \epsilon} \mprob_{X_i \sim P} (L(\eta_2, \thetahat) \ge \delta) \\
& \qquad \qquad \ge \frac{1}{2} \mprob_{X_i \sim P'} (L(\eta_1, \thetahat) \ge \delta) + \frac{1}{2} \mprob_{X_i \sim P'} (L(\eta_2, \thetahat) \ge \delta).
%
%
\end{align*}
Note that if we take $\delta = \frac{1}{2}m(\epsilon, \Theta)$, then
\begin{align*}
\mprob(L(\eta_1, \thetahat) \ge \delta) + \mprob(L(\eta_2, \thetahat) \ge \delta) & \ge \mprob\left(\left\{L(\eta_1, \thetahat) \ge \delta\right\} \bigcup \left\{L(\eta_2, \thetahat) \ge \delta\right\}\right) \\
& \ge \mprob(L(\eta_1, \eta_2) \ge 2\delta) = 1,
\end{align*}
using the fact that $L$ satisfies the triangle inequality.
Thus, we conclude that
\begin{equation*}
\sup_{\thetastar \in \Theta, P: W(P_{\thetastar}, P) \le \epsilon} \mprob\left(L(\thetastar, \thetahat) \ge \frac{m(\eta, \Theta)}{2}\right) \ge \frac{1}{2}.
\end{equation*}
Since the preceding chain of inequalities holds for any $\thetahat$, we may also take an infimum over $\thetahat$ on the left-hand side.

Furthermore, we clearly have
\begin{equation*}
\inf_{\thetahat} \sup_{\thetastar \in \Theta, P: W(P_{\thetastar}, P) \le \epsilon} \mprob_{X_i \sim P}\left(L(\thetastar, \thetahat) \ge \mathcal{M}(0)\right) \ge \inf_{\thetahat} \sup_{\thetastar \in \Theta} \mprob_{X_i \sim P_{\thetastar}}\left(L(\thetastar, \thetahat) \ge \mathcal{M}(0)\right) \ge c.
\end{equation*}
Thus, we have
\begin{equation*}
\inf_{\thetahat} \sup_{\thetastar \in \Theta, P: W(P_{\thetastar}, P) \le \epsilon} \mprob_{X_i \sim P}\left(L(\thetastar, \thetahat) \ge \frac{m(\eta, \Theta)}{2} \wedge \mathcal{M}(0)\right) \ge \frac{1}{2} \vee c,
\end{equation*}
completing the proof.





\section{Proofs for location estimation}

In this appendix, we derive the upper and lower bounds specific to location estimation.

\subsection{Proof of Theorem~\ref{LocUpper}}
\label{AppLocUpper}

We pick a specific setup of weight parameters for the neural network function. For the first layer, let $u \in \real^p$ be such that $\|u\|_2 = B$, and define
\begin{equation*}
w^{(1)}_i = 
\begin{cases}
u, & \text{if } i = 1, \\
0, & \text{otherwise},
\end{cases}
\end{equation*}
and
\begin{equation*}
b^{(1)}_i = 
\begin{cases}
-u^T \thetastar, & \text{if } i = 1, \\
0, & \text{otherwise}.
\end{cases}
\end{equation*}
For $l > 1$, define
\begin{equation*}
w^{(l)}_i =
\begin{cases}
(1,0, \dots,0), & \text{if } i = 1, \\
0, & \text{otherwise}.
\end{cases}
\end{equation*}
Then the neural network output is $f(x) = \sigma(u^T(x-\thetastar))$. (Note that by our construction, all the intermediate outputs in the inner layers are positive, so the ReLU function is simply the identity.) Furthermore, for $X\sim \mathcal{N}(\theta, I)$, we have 
$$\E f(X) = \E\sigma(u^T(X-\thetastar)) = \E\sigma(u^T(X-\theta)+u^T(\theta-\thetastar)) =g(u^T(\theta-\thetastar)),$$ 
where we define $g(t) := \int_{-\infty}^{\infty} \sigma(Bz+t)\phi (z)dz$ and $\phi(\cdot)$ denotes the density function for $\mathcal{N}(0,1)$.

By Theorem~\ref{Upper1} with $\delta = e^{-(p+n\epsilon^2)}$, we have 
\begin{align*}
\sup_{u: \|u\|_2=B} |g(0) - g(u^T(\thetahat - \thetastar))| & \leq \sup_{f \in \mathcal{D}} |\E_{P_{\thetastar}}f(X) - \E_{P_{\widehat{\theta}}}f(X)| \\
& \leq 2C\left(2^{L}\sqrt{\frac{p}{n}} + \sqrt{\frac{2\log(1/\delta)}{n}}\right)+ \frac{B\epsilon}{2} \\
& = 2C\left(2^{L}\sqrt{\frac{p}{n}} + \sqrt{2\left(\frac{p}{n}+\epsilon^2\right)}\right)+ \frac{B\epsilon}{2}. 
\end{align*}
Note that $g'(t) =\int_{-\infty}^{\infty} \sigma'(Bz+t)\phi (z)dz > 0$, for all $t \in \mathbb{R}$. 

Thus, there exist constants $c_0, c_1 > 0$ such that $c_1|t| \leq |g(0)-g(t)|$ whenever $|g(0)-g(t)| \le c_0$ (where both $c_0$ and $c_1$ depend on $B$). Furthermore,
\begin{equation*}
2C\left(2^{L}\sqrt{\frac{p}{n}} + \sqrt{2\left(\frac{p}{n}+\epsilon^2\right)}\right)+ \frac{B\epsilon}{2} \le c_0
\end{equation*}
under the sample size requirement $2^L \cdot \frac{p}{n} + \epsilon^2 \le c$.
Thus, we have
\begin{align}
\label{EqnLocParamUpper}
\|\widehat{\theta} - \thetastar \|_2 & = \frac{1}{B} \sup_{u: \|u\|_2=B} |u^T(\widehat{\theta}-\thetastar)| \notag \\
& \leq \frac{1}{Bc_1} \sup_{u: \|u\|_2=B} |g(0) - g(u^T(\thetahat - \thetastar))| \notag \\
& \leq \frac{2C}{Bc_1}\left(2^{L}\sqrt{\frac{p}{n}} + \sqrt{2\left(\frac{p}{n}+\epsilon^2\right)}\right)+ \frac{\epsilon}{2c_1} \notag \\
& \le C' \left(2^L \sqrt{\frac{p}{n}} \vee \epsilon\right),
\end{align}
with probability at least $1-e^{-(p+n\epsilon^2)}$, where $C'$ is a constant only depending on $B$.

\subsection{Proof of Theorem~\ref{LocLower}}
\label{AppLocLower}


We first show that $m(\epsilon, \Theta) \geq \epsilon$ for the loss function $L(P_{\theta_1}, P_{\theta_2}) = \|\theta_1 - \theta_2\|_{2}$. Consider two distributions $\mathcal{N}(\eta_1, I)$ and $\mathcal{N}(\eta_2, I)$ such that  $\|\eta_1 - \eta_2\|_2 = \epsilon$, and consider the coupling $(Z,Z+\eta_2- \eta_1)$, where $Z\sim \mathcal{N}(\eta_1, I)$. Then by the definition of the Wasserstein-1 distance, we have $W_1(P_{\eta_1}, P_{\eta_2})  \leq \|\eta_1 - \eta_2\|_{2}.$
Thus,
\begin{align*} 
m(\epsilon, \Theta) & =  \sup_{W(P_{\theta_1}, P_{\theta_2})\leq \epsilon, \theta_1, \theta_2 \in \Theta}L(P_{\theta_1}, P_{\theta_2}) \geq \|\eta_1 - \eta_2\|_{2} = \epsilon.
\end{align*}

Next, we calculate $\mathcal{M}$(0) using a fairly standard argument via Fano's method. From (the proof of) Propositions 15.1 and 15.12 in~\cite{Wai19}, we have the following guarantee for any estimator $\thetahat$, any $\delta>0$, and any $2\delta$-separated set $\mathcal{V} = \{\theta^1, \dots, \theta^M\} \subseteq \Theta$:
\begin{equation}
\label{EqnFano}\sup_{\thetastar \in \Theta} \mathbb{P}(\|\thetahat - \thetastar\|_2 \geq \delta) \geq 1 - \frac{I(Z;J) + \log2}{\log M},
\end{equation}
where $J$ is chosen uniformly at random from $\{1, \dots, M\}$ and $Z \mid J = j \sim \mprob_{\theta^j}$. Furthermore, the mutual information may be bounded via
$$ I(Z;J) \leq \frac{1}{M^2}\sum_{j,k=1}^M \text{KL}(P_{\theta^j},P_{\theta^k}).$$
%

Let $\mathcal{V}_0$ be a $1/2$-packing of the unit ball with cardinality $2^p$ (which exists by Example 5.8 in \cite{Wai19}). We then scale the vectors by $\delta$ to obtain $\mathcal{V} = \delta\mathcal{V}_0$. Note that for any $\theta^j, \theta^k \in \mathcal{V}$, we have $\|\theta^j - \theta^k\|_2 \le 2\delta$, so the formula for the KL divergence between multivariate Gaussian distributions gives $\text{KL}(P_{\theta_i},P_{\theta_j}) \le \frac{4n\delta^2}{2}$.
Thus, $ I(Z;J) \leq 2n\delta^2$, as well. Choosing $\delta^2 = \frac{p\log2}{2n}$, we conclude that
$$\sup_{\thetastar \in \Theta} \mprob\left(\|\thetahat - \thetastar\|_2 \geq C\sqrt{\frac{p}{n}}\right) \geq 1 - \frac{p \log 2 + \log 2}{p} \ge \frac{1}{4},$$
where $C = \sqrt{\frac{\log2}{2}}$, so we can take $\mathcal{M}(0) = C \sqrt{\frac{p}{n}}$.


Combining the expressions for $\mathcal{M}(0)$ and $m(\epsilon, \Theta)$ and using Theorem~\ref{ThmModulus} gives the desired result.

\subsection{Proof of Theorem~\ref{SparseLocUpper}}
\label{AppSparseLocUpper}

We define the same function $g(t)$ as in the proof of Theorem~\ref{LocUpper} and pick the same setup of weight parameters. Note that this setup of weight parameters satisfies the sparsity constraint, thus belongs to the function class $\mathcal{D}_{sp}(B, L, k)$. Then the argument used in the proof for Theorem~\ref{LocUpper} can still be applied, except we replace Theorem~\ref{Upper1} by Theorem~\ref{Upper2}, to obtain
\begin{equation*}
\|\widehat{\theta} - \theta \|_2 \leq C' \left(2^L \sqrt{\frac{k\log\frac{p}{k}}{n}} \vee \epsilon \right),
\end{equation*}
with probability at least $1-e^{-(p+n\epsilon^2)}$, where $C'$ only depends on $B$.


\subsection{Proof of Theorem~\ref{SparseLocLower}}
\label{AppSparseLocLower}

As in the proof of Theorem~\ref{LocLower}, we choose two distributions $\mathcal{N}(\eta_1, I)$ and $\mathcal{N}(\eta_2, I)$, where $\eta_1$ and $\eta_2$ are $k$-sparse vectors and $\|\eta_1 - \eta_2\|_2 = \epsilon$. Following the same arguments, we have $m(\epsilon, \Theta) \geq \epsilon$.

To compute $\mathcal{M}$(0), we again use the bound via Fano's method~\eqref{EqnFano}.
By Example 15.16 of \cite{Wai19}, we can find a 1/2-packing $\mathcal{V}_1$ of the unit ball of sparse vectors $S(k) = \{\theta \in \mathbb{R}^p: \|\theta\|_0 \leq k, \|\theta\|_2\leq 1\}$ of cardinality at least $\log|\mathcal{V}_1| \geq \frac{k}{2}\log\frac{p-k}{k}$. Then taking $\mathcal{V} = \delta \mathcal{V}_1$, we have $I(J;Z) \leq 2n\delta^2$, as before. 

Thus, if we take $\delta^2 = \frac{1}{8n}\frac{k}{2}\log\frac{p-k}{k}$, we have 
$$ \sup_{\theta \in \Theta} \mprob\left(\|\widehat{\theta} - \theta\|_2 \geq \delta\right) \geq 1 - \frac{1}{4} - \frac{\log2}{\frac{k}{2}\log\frac{p-k}{k}}.$$
As long as $k \geq 4$ and $k < \frac{p}{3}$, we then have
$$\sup_{\theta \in \Theta} \mprob\left(\|\widehat{\theta} - \theta\|_2 \geq \delta\right) \geq \frac{1}{4},$$
so $\mathcal{M}(0) = O\left(\frac{k\log\frac{p}{k}}{n}\right)$.

Theorem~\ref{ThmModulus} completes the proof.


\subsection{Proof of Theorem~\ref{EllipLoc}}
\label{AppEllipLoc}

Note that our proof of Theorem~\ref{Upper1} does not depend on the true distribution family. Thus, the same result holds for elliptical distributions, where the parameters $(\thetahat, \widehat{h}, \widehat{A})$ take the place of $\thetahat$.

Thus, with at least $1 - \delta$ probability, we have
$$\sup_{f \in \mathcal{D}} \left|\E_{\thetastar, h^*, A^*}f(X) - \E_{\widehat{\theta}, \widehat{h},\widehat{A}}f(X)\right| \leq 2 C\left(2^{L}\sqrt{\frac{p}{n}} + \sqrt{\frac{2\log(1/\delta)}{n}}\right)+  \frac{B}{2}\epsilon.$$

We pick a specific setup of weight parameters.
Let $\Sigma=AA^T$ and $\widehat{\Sigma}=\widehat{A}\widehat{A}^T.$
For the first layer, let $u \in \real^p$ be such that $\|u\|_2 = 1$, and define
\begin{equation*}
w^{(1)}_i = 
\begin{cases}
\frac{u}{\sqrt{u^T \Sigma^* u}}, & \text{if } i = 1, \\
0, & \text{otherwise},
\end{cases}
\end{equation*}
and
\begin{equation*}
b^{(1)}_i = 
\begin{cases}
\frac{-u^T \widehat{\theta}}{\sqrt{u^T \Sigma^* u}}, & \text{if } i = 1, \\
0, & \text{otherwise}.
\end{cases}
\end{equation*}

For $l > 1$, define
\begin{equation*}
w^{(l)}_i =
\begin{cases}
(1,0, \dots,0), & \text{if } i = 1, \\
0, & \text{otherwise}.
\end{cases}
\end{equation*}

Under this setup, the neural network output is $f(x) = \sigma\left(\frac{u^T(x-\widehat{\theta})}{\sqrt{u^T\Sigmastar u}}\right)$ (since the ReLU functions are applied only to positive inputs), and $\E f(X) = \E\sigma\left(\frac{u^T(X-\widehat{\theta})}{\sqrt{u^T\Sigmastar u}}\right).$ 

Then we have
\begin{align*}
\E_{\thetastar, h^*, A^*}f(X) - \E_{\widehat{\theta}, \widehat{h},\widehat{A}}f(X) & = \E_{\thetastar, h^*, A^*} \sigma\left(\frac{u^T(X-\thetahat)}{\sqrt{u^T\Sigma^* u}}\right) -  \E_{\widehat{\theta}, \widehat{h},\widehat{A}} \sigma\left(\frac{u^T(X-\thetahat)}{\sqrt{u^T\Sigma^* u}}\right) \\
& = \E_{\thetastar, h^*, A^*} \sigma\left(\frac{u^T\xi A^*U}{\sqrt{u^T \Sigma^* u}} + \frac{u^T (\thetastar - \thetahat)}{\sqrt{u^T \Sigma^* u}}\right) -  \E_{\widehat{\theta}, \widehat{h},\widehat{A}} \sigma\left(\frac{u^T\xi \widehat{A}U}{\sqrt{u^T \Sigma^* u}}\right) \\
& = \E\sigma\left(S + \frac{u^T(\thetastar - \thetahat)}{\sqrt{u^T \Sigma^* u}}\right) - \E\sigma(S),
\end{align*}
where we set $S = u^T \xi U$ and use the fact that
\begin{equation*}
\E_{\widehat{\theta}, \widehat{h},\widehat{A}} \sigma\left(\frac{u^T\xi \widehat{A}U}{\sqrt{u^T \Sigma^* u}}\right) = \frac{1}{2} = \E\sigma(S).
\end{equation*}

Thus,
\begin{align*}
|\E_{\thetastar, h^*, A^*}f(X) - \E_{\widehat{\theta}, \widehat{h},\widehat{A}}f(X)| & = \left|\E\sigma(S) - \E\sigma\left(S + \frac{u^T(\thetastar-\widehat{\theta})}{\sqrt{u^T\Sigmastar u}}\right)\right| \\
& = \left|g(0) - g\left(\frac{u^T(\thetastar - \widehat{\theta})}{\sqrt{u^T\Sigmastar u}}\right)\right|,
\end{align*}
where $g(t) := \int \sigma(s+t)h^*(s) ds$. Note that $g(t)$ is a monotonically increasing function, and by our restriction~\eqref{EqnRestrict}, we have $g'(0) =  \int \sigma'(s)h(s) ds = 1$. Thus, following the same argument used to establish inequality~\eqref{EqnLocParamUpper} above, we obtain 
\begin{equation*}
\|(\Sigma^*)^{-1}(\thetastar - \widehat{\theta})\|_2 \le C \left(2^L \sqrt{\frac{p}{n}} \vee \epsilon\right),
\end{equation*}
where the constant $C$ only depends on $B$. Since $\| \Sigma^* \|_2 \leq M_2$, we obtain 
$$\| \thetastar - \widehat{\theta}\|_2 \leq C' \left(2^L \sqrt{\frac{p}{n}} \vee \epsilon\right),$$
where the constant $C'$ depends on $B$ and $M_2$.


\section{Proofs for covariance estimation}

In this appendix, we derive the upper and lower bounds specific to covariance matrix estimation.

\subsection{Proof of Theorem~\ref{CovUpper}}
\label{AppCovUpper}

From Theorem~\ref{Upper1}, we immediately see that with at least $1-\delta$ probability, 
\begin{equation}
\sup_{f \in \mathcal{D}} \left|\E_{P_{\Sigmastar}}f(X) - \E_{P_{\widehat{\Sigma}}}f(X)\right| \leq 2 C\left(2^{L}\sqrt{\frac{p}{n}} + \sqrt{\frac{2\log(1/\delta)}{n}}\right)+ \frac{B}{2}\epsilon. \label{cov0}
\end{equation}

For the first layer, let $u \in \real^p$ be such that $\|u\|_2 = 1$, and define
\begin{equation*}
w^{(1)}_i = 
\begin{cases}
\frac{u}{\sqrt{u^T\Sigmastar u}}, & \text{if } i = 1, \\
0, & \text{otherwise},
\end{cases}
\end{equation*}
and
\begin{equation*}
b^{(1)}_i = 
\begin{cases}
1, & \text{if } i = 1, \\
0, & \text{otherwise}.
\end{cases}
\end{equation*}
(Note that since $\lambda_{\min}(\Sigma^*) \ge \frac{1}{B}$, we are guaranteed that $\|w^{(1)}\|_2 \le B$.)
For $l > 1$, define
\begin{equation*}
w^{(l)}_i =
\begin{cases}
(1,0, \dots,0), & \text{if } i = 1, \\
0, & \text{otherwise}.
\end{cases}
\end{equation*}
Under this setup, we have $f(X) = \sigma \left(\frac{u^TX}{\sqrt{u^T\Sigmastar u}} + 1\right).$

Note that for $X \sim \mathcal{N}(0, \Sigmastar)$, we have $\frac{u^TX}{\sqrt{u^T\Sigmastar u}} \sim \mathcal{N}(0,1)$. For $X \sim \mathcal{N}(0, \widehat{\Sigma})$, we have $\frac{u^TX}{\sqrt{u^T\Sigmastar u}} = \frac{\sqrt{u^T\widehat{\Sigma}u}}{\sqrt{u^T\Sigmastar u}} \frac{u^TX}{\sqrt{u^T\widehat{\Sigma} u}}$. Denoting $\Delta = \sqrt{\frac{u^T\widehat{\Sigma}u}{u^T\Sigmastar u}}$, we have $\frac{u^TX}{\sqrt{u^T\Sigmastar u}} = \Delta \frac{u^TX}{\sqrt{u^T\widehat{\Sigma} u}}$.

Let $Z$ be a random variable from $\mathcal{N}(0,1)$. From inequality~\eqref{cov0}, we have 
\begin{equation}
\sup_{u:\|u\|_2 = B} \left|\mathbb{E}\sigma(Z+1) - \mathbb{E}\sigma(\Delta Z + 1)\right| \leq 2 C\left(2^{L}\sqrt{\frac{p}{n}} + \sqrt{\frac{2\log(1/\delta)}{n}}\right)+ \frac{B}{2}\epsilon. \label{cov2}
\end{equation}

Define $g(t) =  \int_{-\infty}^{\infty}\sigma(tz+1)\phi(z)dz$, where $\phi(z)$ is density of $\mathcal{N}(0,1)$. Then
\begin{equation*}
g'(t) = \int_{-\infty}^{\infty} \sigma(tz+1)(1-\sigma(tz+1))z\phi(z)dz.
\end{equation*}
When $t > 0$, we can see that $g'(t) < 0$. Thus, $g(t)$ is a monotonically decreasing function on $(0, +\infty )$ and $g'(1) \neq 0$. Hence, when $|g(t) - g(1)|$ is sufficiently small, we obtain $|g(t) - g(1)| \geq c'|t-1|$ for some constant $c'$.

Thus, we have 
\begin{equation*}
\sup_{u:\|u\|_2=B} |\Delta - 1| \leq \frac{2C}{c'}\left(2^{L}\sqrt{\frac{p}{n}} + \sqrt{\frac{2\log(1/\delta)}{n}}\right)+ \frac{B}{2c'}\epsilon.
\end{equation*}
%
%
Also note that for $|\Delta - 1| \le 1$, we have $|\Delta^2 - 1| \le 2|\Delta - 1|$. Furthermore, since
\begin{equation}
\sup_{u:\|u\|_2=B} |\Delta^2 - 1| = \sup_{u:\|u\|_2=B} \left|\frac{u^T(\widehat{\Sigma}-\Sigmastar)u}{u^T\Sigmastar u}\right| = \left\|\Sigma^{*-\frac{1}{2}}(\widehat{\Sigma}-\Sigmastar)\Sigma^{*-\frac{1}{2}}\right\|_2, \label{cov1}
\end{equation}
with $\|\Sigmastar\|_2 \leq M$, we obtain $\left\| \Sigma^{*-\frac{1}{2}} (\widehat{\Sigma}-\Sigmastar)\Sigma^{*-\frac{1}{2}} \right\|_2 \geq M^{-1}\|\widehat{\Sigma}-\Sigmastar \|_2$. 
Hence, we can conclude that
$$\|\widehat{\Sigma}-\Sigmastar\|_2 \leq \frac{2MC}{c'c''}\left(2^{L}\sqrt{\frac{p}{n}} + \sqrt{\frac{2\log(1/\delta)}{n}}\right)+ \frac{MB}{2c'c''}\epsilon,$$
with probability at least $1-\delta$, implying that
$$\|\widehat{\Sigma}-\Sigma\|_2 \leq C' \left( 2^L \sqrt{\frac{p}{n}} \vee \epsilon \right) ,$$
with probability at least $1-e^{(p+n\epsilon^2)}$, where $C'$ only depends on $B$.


\subsection{Proof of Theorem~\ref{CovLower}}
\label{AppCovLower}

We will again use Theorem~\ref{ThmModulus} to obtain a lower bound, so our main effort is to calculate $\mathcal{M}(0)$ and $m(\epsilon, \Theta)$.

For the loss function $L(P_{\Sigma_1}, P_{\Sigma_2}) = \|\Sigma_1 - \Sigma_2\|_{2}$, we have $ m(\epsilon, \Theta) \geq \epsilon$.
Let $U=I_p$ and define $V$ such that $V_{11} = (1+\epsilon)^2$ and $V_{ii}=U_{ii}$ for $2 \le i \le p$. For Gaussian distributions, we may calculate
\begin{align*}
W_2(P_{U}, P_{V})  & = \left(\tr(U) + \tr(V) - 2\tr\left(\left(U^{1/2} V U^{1/2}\right)^{1/2}\right)\right)^{1/2} =  \|U^{1/2} - V^{1/2}\|_{F} = \epsilon.
\end{align*}
Since $W_1(P_{U}, P_{V}) \leq W_2(P_{U}, P_{V})$, this implies that $W_1(P_{U}, P_{V})  \leq \epsilon$. 
Thus,
\begin{align*} 
m(\epsilon, \Theta) & =  \sup_{W(P_{\Sigma_1}, P_{\Sigma_2})\leq \epsilon, \Sigma_1, \Sigma_2 \in \Theta} \|\Sigma_1 - \Sigma_2\|^2_{2} \geq \|U - V\|_{2} > \epsilon.
\end{align*}

The construction used in the proof of Theorem 6 of \cite{Ma1} can be used to obtain a bound on $\mathcal{M}(0)$. Specifically, using a variant of Fano's method, the authors implicitly provide a lower bound on the estimation error of the covariance matrix of the form 
$$ \inf_{\widehat{\Sigma}} \sup_{\Sigmastar \in \Theta'} \mathbb{P} \left( \|\Sigma^* - \widehat{\Sigma}\|_2^2 \geq C\lambda^2\frac{p}{n}\right) \ge c,$$
for any $\lambda > 0$, where the parameter space $\Theta'$ consists of positive semidefinite matrices $\Sigma$ satisfying $\lambda_{\min}(\Sigma) \geq \frac{1}{4}\lambda$ and $\lambda_{\max}(\Sigma) \leq \frac{3}{4}\lambda$. In fact, a similar argument shows that for any $\alpha \in \left(0, \frac{1}{4}\right]$, we have
\begin{equation*}
\inf_{\widehat{\Sigma}} \sup_{\Sigmastar \in \Theta'_\alpha} \mathbb{P} \left( \|\Sigma^* - \widehat{\Sigma}\|_2^2 \geq C_\alpha\lambda^2\frac{p}{n}\right) \ge c,
\end{equation*}
where
\begin{equation*}
\Theta'_\alpha := \left\{\Sigma \succeq 0: \lambda_{\min}(\Sigma) \ge \left(\frac{1}{2} - \alpha\right) \lambda, \quad \lambda_{\max}(\Sigma) \le \left(\frac{1}{2} + \alpha\right) \lambda\right\},
\end{equation*}
and the constant $C_\alpha$ depends only on $\alpha$.

For an appropriate choice of $\lambda$ and $\alpha$, e.g., $\alpha = \min\left\{\frac{M_2 - M_1}{2(M_1 + M_2)}, \frac{1}{4}\right\}$ and $\lambda = \frac{M_1}{1/2 - \alpha}$, we can make $\Theta'_\alpha$ a subset of our parameter space $\Theta$, implying a lower bound of $\mathcal{M}(0) = \Omega(\frac{p}{n})$ (where the constant prefactor depends on $M_1$ and $M_2$).

Thus, the final lower bound is $\Omega\left(\sqrt{\frac{p}{n}} \vee \epsilon\right)$.


\subsection{Proof of Theorem~\ref{SparseCovUpper1}}
\label{AppSparseCovUpper1}

For $\Sigmastar, \widehat{\Sigma} \in \mathcal{F}_1(k)$, we know by Lemma 2 of \cite{covlower1} that
$$ \|\widehat{\Sigma} - \Sigmastar\|_2 \leq \frac{3}{B^2} \max_{u \in \mathcal{U}_1(B, 2k)}|u^T(\widehat{\Sigma} - \Sigmastar)u|. $$
Thus, to obtain an upper bound, we can choose the same parameter setup as in the proof of Theorem~\ref{CovUpper}, since the weight parameters are $2k$-sparse. We define $g(t)$ in the same way as before. Inequality \eqref{cov2} then becomes
\begin{multline*}
\sup_{u: \|u\|_2 = B} \left|\mathbb{E}\sigma(z+1) - \mathbb{E}\sigma(\Delta z + 1)\right| \\
\leq 2 C\left(2^{L}\sqrt{\frac{\max(2k, \log p + 2k\log(\log{p}/(2k))}{n}} + \sqrt{\frac{2\log(1/\delta)}{n}}\right) + \frac{B}{2}\epsilon,
\end{multline*}
since by Lemma~\ref{VCdim_lemma}, the VC dimension of the function class \eqref{func6} is $O(\max(2k, \log p + 2k\log(\log{p}/(2k)))$. The remainder of the proof proceeds as before, implying the desired result.


\subsection{Proof of Theorem~\ref{SparseCovUpper2}}
\label{AppSparseCovUpper2}

For $\Sigmastar, \widehat{\Sigma} \in \mathcal{F}_2(k)$, we have
$$ \|\widehat{\Sigma} - \Sigmastar\|_2 = \frac{1}{B^2} \max_{u: \|u\|_2 = B, \|u\|_0 \le 2k}|u^T(\widehat{\Sigma} - \Sigmastar)u|. $$

Following the same argument used in the proof of Theorem~\ref{CovUpper}, we choose same parameter setup and define $g(t)$ as before. Inequality \eqref{cov2} then becomes
\begin{equation*}
\sup_{u: \|u\|_2 = B} \left|\E\sigma(z+1) - \E\sigma(\Delta z + 1)\right| \leq 2 C\left(2^{L}\sqrt{\frac{2k+2k\log\frac{p}{2k}}{n}} + \sqrt{\frac{2\log(1/\delta)}{n}}\right)+ \frac{B}{2}\epsilon,
\end{equation*}
by Theorem~\ref{Upper2}. The remainder of the argument holds as before, implying the desired result.


\subsection{Proof of Theorem~\ref{SparseCovLower}}
\label{AppSparseCovLower}

As in the proof for Theorem~\ref{CovLower}, we consider two diagonal covariance matrices, $U$ and $V$, where $U = I_p$ and $V$ is defined by $V_{11} = (1+\epsilon)^2$ and $V_{ii}=U_{ii}$ for $2 \le i \le p$. Note that both $U$ and $V$ are in $\mathcal{F}_1(k)$ and $\mathcal{F}_2(k)$. Thus, the same argument used in the proof of Theorem~\ref{CovLower} implies that $m(\epsilon, \Theta) \geq  \epsilon $.

For banded covariance matrices, we can use a construction from the proof of Theorem 3 in \cite{covlower1} to obtain a lower bound on $\mathcal{M}(0)$. For given positive integers $k$ and $m$ with $2k < p$ and $1 \leq m \leq k$, define the $p \times p$ matrix $B(m,k) = (b_{ij})_{p\times p}$ with 
$$b_{ij} = I\{i=m \,\text{and}\, m+1 \leq j \leq 2k, \text{or}\, j = m \,\text{and}\, m+1\leq i \leq 2k\}.$$
For a value of $\gamma > 0$ to be specified later, we define the following collection of $2^k$ covariance matrices: 
$$ \mathcal{F}_{11} = \left\{\Sigma(\theta): \Sigma(\theta) = I_p + \gamma \sum_{m=1}^k \theta_m B(m,k), \theta=(\theta_m) \in \{0,1\}^k\right\}.$$
Assouad's lemma~\citep{Yu97} can be used to obtain a minimax lower bound over this parameter space. Let $X_i \stackrel{i.i.d.}{\sim} N(0, \Sigma(\theta))$ with $\Sigma(\theta)\in \mathcal{F}_{11}$. 
By Assouad's lemma, we obtain a lower bound of the form
\begin{equation}
\label{EqnAssouad}
\min_{H(\theta,\theta')\geq 1} \frac{\|\Sigma(\theta) - \Sigma(\theta')\|_2^2}{H(\theta, \theta')} \cdot \frac{k}{2}  \cdot \min_{H(\theta, \theta')=1} \|P_\theta \wedge P_{\theta'}\|,
\end{equation}
where $H$ is the Hamming distance and $\|P_\theta \wedge P_{\theta'}\| = 1 - \frac{\|P_\theta - P_{\theta'}\|_1}{2}$ is the TV affinity. We now bound the individual terms separately. For the first factor, consider a fixed pair $(\theta, \theta')$ and let $v \in \{0,1\}^p$ be defined such that $v_i = 1$ for $\frac{k}{2} \le i \le k$ and $v_i = 0$ otherwise. Also define $w=(\Sigma(\theta) - \Sigma(\theta'))v$. Note that $w$ has exactly $H(\theta, \theta')$ components of magnitude $\frac{k}{2}\gamma$, and $\|v\|_2^2 = \frac{k}{2}$. Thus,
$$ \|\Sigma(\theta) - \Sigma(\theta')\|_2^2 \geq \frac{ \|(\Sigma(\theta) - \Sigma(\theta'))v \|_2^2}{\|v\|_2^2} \geq \frac{H(\theta, \theta') (\frac{k}{2}\gamma)^2}{k/2} = H(\theta, \theta')\frac{k}{2}\gamma^2.$$
For the last factor in expression~\eqref{EqnAssouad}, Lemma 6 of \cite{covlower1} shows that $\min_{H(\theta, \theta') = 1} \|P_\theta \wedge P_{\theta'}\| \ge c$ for a constant $c > 0$.
%
%
%
%
%
%
%
%
%
%
Finally, plugging $\gamma^2 = \frac{1}{kn}$ into our lower bounds for the expression~\eqref{EqnAssouad}, we obtain an overall bound of the form $\Omega\left(\frac{k}{n}\right)$.
%
%
Note that in the regime $n > k$, we also have
\begin{equation*}
\left\|\gamma \sum_{m=1}^k \theta_m B(m,k)\right\|_2 \leq \left\|\gamma \sum_{m=1}^k \theta_m B(m,k)\right\|_1 = O(1).
\end{equation*}
Thus, with an appropriate scaling (depending on $M_1$ and $M_2$), the parameter space $\mathcal{F}_{11}$ can be made a subset of our parameter space $\Theta_1(k)$. 

Next, define the set of matrices
$$\mathcal{F}_{12} = \left\{ \Sigma_m: \Sigma_m = I_p + \left(\sqrt{\frac{\log p}{n}} \cdot I\{i=j=m\}\right)_{p\times p}, \quad 0 \leq m \leq p\right\}.$$
Using Le Cam's method, we can obtain a minimax lower bound of the form $\Omega\left(\frac{\log p}{n}\right)$ over the class of covariance matrices in $\mathcal{F}_{12}$ (cf.\ Section 3.2.2 in \cite{covlower1}).
%
%
Similarly, with an appropriate scaling (depending on $M_1$ and $M_2$), the parameter space $\mathcal{F}_{12}$ can be made a subset of our parameter space $\Theta_1(k)$.

Combining the two bounds on $\mathcal{F}_{11}$ and $\mathcal{F}_{12}$, we then have
$$  \inf_{\widehat{\Sigma}}\sup_{\Sigma^* \in \Theta_1(k)} \mprob\left( \|\Sigma^* - \widehat{\Sigma}\|_2^2 \geq C\left(\frac{k}{n} + \frac{\log p}{n} \right)\right) \ge c.$$
Thus, we obtain $\mathcal{M}(0) = \Omega\left(\sqrt{\frac{k + \log p}{n}}\right)$, where the constant prefactor depends on $M_1$ and $M_2$.

For sparse matrix estimation, the proof of Theorem 4 in \cite{covlower2} exhibits a construction involving the set of rank-one matrices defined by
$$\Theta' = \left\{ \Sigma = I_p + \lambda vv^T: \|v\|_2 = 1, \|v\|_0=k \right\}, $$
showing that
$$  \inf_{\widehat{\Sigma}}\sup_{\Sigma^* \in \Theta'} \mprob\left(\|\Sigma^* - \widehat{\Sigma}\|_2^2 \geq C\left(\lambda \frac{k\log\frac{ep}{k}}{n} + \lambda^2\frac{k}{n}\right)\right) \ge c.$$
Note that we can change the $I_p$ in the definition of the parameter space $\Theta'$ to $\alpha I_p$, and a similar bound holds with constant prefactor also depending on $\alpha$. Clearly, we can choose $\lambda$ and $\alpha$ such that $\Theta_2$ is a superset of $\Theta'$. Thus, we have $\mathcal{M}(0) = \Omega\left(\sqrt{\frac{k + k\log \frac{ep}{k}}{n}}\right)$, where the constant prefactor depends on $M_1$ and $M_2$.

Thus, by Theorem~\ref{ThmModulus}, we obtain the desired lower bounds.


\subsection{Proof of Theorem~\ref{EllipCov}}
\label{AppEllipCov}

Note that the proof of Theorem~\ref{Upper1} only depended on the form of the sigmoid activation function insofar as its range is in $[0,1]$ and it has a bounded Lipschitz constant. These properties also hold for the ramp activation function. Thus, the same results hold for the function class $\mathcal{D}_e(B, L)$. With at least $1-\delta$ probability, we then have
\begin{equation*}
\sup_{f \in \mathcal{D}_e(B,L)} \left|\E_{P_{\Sigmastar}}f(X) - \E_{P_{\widehat{\Sigma}}}f(X)\right| \leq 2 C\left(2^{L}\sqrt{\frac{p}{n}} + \sqrt{\frac{2\log(1/\delta)}{n}}\right)+ \frac{B}{2}\epsilon.
\end{equation*}

For the first layer, let $u \in \real^p$ be such that $\|u\|_2 = 1$, and define
\begin{equation*}
w^{(1)}_i = 
\begin{cases}
\frac{u}{\sqrt{u^T\widehat{\Sigma} u}}, & \text{if } i = 1, \\
-\frac{u}{\sqrt{u^T\widehat{\Sigma} u}}, & \text{if } i = 2, \\
0, & \text{otherwise},
\end{cases}
\end{equation*}
and
\begin{equation*}
b^{(1)}_i = 
\begin{cases}
-\frac{1}{2}, & \text{if } i = 1, 2 \\
0, & \text{otherwise}.
\end{cases}
\end{equation*}
For the second layer, define
\begin{equation*}
w^{(2)}_i = 
\begin{cases}
\left(\frac{1}{2},-\frac{1}{2},0, \dots,0\right) & \text{if } i = 1, \\
0, & \text{otherwise}.
\end{cases}
\end{equation*}
For $l > 2$, define
\begin{equation*}
w^{(l)}_i =
\begin{cases}
(1,0, \dots,0), & \text{if } i = 1, \\
0, & \text{otherwise}.
\end{cases}
\end{equation*}
Under this setup, we have $f(X) = R\left(\left|\frac{u^TX}{\sqrt{u^T\widehat{\Sigma} u}}\right| \right).$ Note that when training the GAN model to estimate $\widehat{\Sigma}$, we can constrain the estimated parameters to lie in the true parameter space~\eqref{space:covGauss}. Thus, the weights in the first layer satisfy the constraint $\|w\|_1 \leq B$.

Thus, we have 
\begin{equation}
\left| \int R(\Delta|z|)h^*(z)dz  - \int R(|z|)\widehat{h}(z)dz \right| \leq 2 C\left(2^{L}\sqrt{\frac{p}{n}} + \sqrt{\frac{2\log(1/\delta)}{n}}\right)+ \frac{B}{2}\epsilon, \label{covellip}
\end{equation}
where $\Delta = \sqrt{\frac{u^T\Sigma^* u}{u^T\widehat{\Sigma} u}}$.

By the constraint, we also have
$$ \int R(|z|)h^*(z)dz =  \int R(|z|)\phi(z)dz
 =  \int R(|z|)\widehat{h}(z)dz,$$
implying that
\begin{equation*}
\left| \int R(\Delta |z|)h^*(z)dz  - \int R(|z|)h^*(t)dt \right| \leq 2 C\left(2^{L}\sqrt{\frac{p}{n}} + \sqrt{\frac{2\log(1/\delta)}{n}}\right)+ \frac{B}{2}\epsilon.
\end{equation*}

Defining $g(t) = \int R(t|z|)h^*(z)dz$, it can be shown that $g(t)$ is an increasing function for all $t>0$. Thus, the remainder of the proof follows as in Theorem~\ref{CovUpper}, and we obtain the desired result.


\section{Proofs for linear regression}

In this appendix, we derive the upper and lower bounds specific to linear regression.

\subsection{Proof of Theorem~\ref{RegUpper}}
\label{AppRegUpper}

By Theorem~\ref{Upper1}, we immediately see that with at least $1-\delta$ probability, 
$$\sup_{f \in \mathcal{D}} \left|\E_{P_{\betastar}}f(X, Y) - \E_{P_{\widehat{\beta}}}f(X, Y)\right| \leq 2 C\left(2^{L}\sqrt{\frac{p}{n}} + \sqrt{\frac{2\log(1/\delta)}{n}}\right)+ \frac{B}{2} \epsilon, $$ 
for any values of ${\betastar}$, $\Sigmastar$, and $P$.

For the first layer, let $u \in \real^p$ be such that $\|u\|_2 = B$, and define
\begin{equation*}
w^{(1)}_i = 
\begin{cases}
(-\beta^{*T},1), & \text{if } i = 1, \\
0, & \text{otherwise},
\end{cases}
\end{equation*}
and
\begin{equation*}
b^{(1)}_i = 
\begin{cases}
1, & \text{if } i = 1, \\
0, & \text{otherwise}.
\end{cases}
\end{equation*}
For $l > 1$, define
\begin{equation*}
w^{(l)}_i =
\begin{cases}
(1,0, \dots,0), & \text{if } i = 1, \\
0, & \text{otherwise}.
\end{cases}
\end{equation*}
Under this setup, we have $f(X,Y) = \sigma(Y - X^T\betastar + 1)$. Thus,
$$|\E_{P_{\betastar}}\sigma(Y-X^T\betastar+1) - \E_{P_{\widehat{\beta}}}\sigma(Y-X^T\betastar+1)| \leq 2 C\left(2^{L}\sqrt{\frac{p}{n}} + \sqrt{\frac{2\log(1/\delta)}{n}}\right)+ \frac{B}{2} \epsilon. $$

Let $Z = Y-X^T\betastar$ and define $g(t) = \E_Z \sigma(tZ + 1)$. Then we have
\begin{align*}
\left|g(1) - g\left(\sqrt{1 + (\widehat{\beta} - \betastar)^T \Sigmastar (\widehat{\beta} - \betastar)}\right)\right| & = \left| \mathbb{E}\sigma(Z + 1) - \mathbb{E}\sigma(Z + X^T(\widehat{\beta} - \betastar) + 1) \right| \\
& \leq 2 C\left(2^{L}\sqrt{\frac{p}{n}} + \sqrt{\frac{2\log(1/\delta)}{n}}\right)+ \frac{B}{2} \epsilon.
\end{align*}
Using the same arguments as in the proof of Theorem~\ref{CovUpper}, there exists $c_1>0$ such that $c_1|t-1| \leq |g(t) - g(1)|$. Similarly, $\sqrt{1+x} - 1 \ge c_2x$ for some constant $c_2$ when $\left|\sqrt{1+x} - 1\right|$ is sufficiently small. Thus, we have
$$ (\widehat{\beta} - \betastar)^T \Sigma (\widehat{\beta} - \betastar) \leq 2\frac{C}{c_1c_2}\left(2^{L}\sqrt{\frac{p}{n}} + \sqrt{\frac{2\log(1/\delta)}{n}}\right)+ \frac{B\epsilon}{2c_1c_2},$$
implying that
$$ \|\widehat{\beta} - \betastar\|_2^2 \leq 2\frac{C}{c_1c_2\lambda_{\min}(\Sigma)}\left(2^{L}\sqrt{\frac{p}{n}} + \sqrt{\frac{2\log(1/\delta)}{n}}\right)+ \frac{B\epsilon}{2c_1c_2\lambda_{\min}(\Sigma)},$$
which completes the proof.


\subsection{Proof of Theorem~\ref{RegLower}}
\label{AppRegLower}

We will apply Theorem~\ref{ThmModulus}. We will first derive the lower bound
$$m(\epsilon, \Theta) = \sup_{W(P_{\beta_1}, P_{\beta_2})\leq \epsilon} L(\beta_1, \beta_2) \geq \frac{\sqrt{\epsilon}}{\sigma}.$$
Let $\beta_1 = 0$ and $\beta_2 = \frac{\sqrt{\epsilon}}{\sigma} u$, where $u$ is a unit vector. 

Note that the joint distribution is $(X,Y)\sim \mathcal{N}(0, \Gamma)$, where 
\begin{equation*}
\Gamma = 
\begin{pmatrix}
      \sigma^2I & \sigma^2\beta \\
      \sigma^2\beta^T & \sigma^2\|\beta\|^2 + 1
\end{pmatrix}.      
\end{equation*}
Thus, the choices of $\beta_1$ and $\beta_2$ give
\begin{equation*}
\Gamma_1 = 
\begin{pmatrix}
      \sigma^2I & 0 \\
      0 & 1
\end{pmatrix}, \qquad \text{and }
\Gamma_2 = 
\begin{pmatrix}
      \sigma^2I & \sqrt{\epsilon} \sigma u \\
      \sqrt{\epsilon} \sigma u^T & \epsilon + 1
\end{pmatrix}.      
\end{equation*}
We first compute
\begin{align*}
W_2(P_{\beta_1}, P_{\beta_2}) & = \tr(\Sigma_1) + \tr(\Sigma_2) - 2\tr((\Sigma_1^{1/2}\Sigma_2\Sigma_1^{1/2})^{1/2}) \\
& = \tr(\Sigma_1) + \tr(\Sigma_2) - 2\tr(M^{1/2}),
\end{align*}
where 
\begin{equation*}
M = 
\begin{pmatrix}
      \sigma^4I & \sqrt{\epsilon} \sigma^2 u \\
       \sqrt{\epsilon} \sigma^2 u^T & \epsilon+1
\end{pmatrix}.
\end{equation*}
The characteristic function of $M$ is $(\lambda-\sigma^4)^{p-1}(\lambda^2-(\sigma^4+\epsilon+1)\lambda + \sigma^4)$. Thus,
\begin{align*}
W_2(P_{\beta_1}, P_{\beta_2}) & = (p\sigma^2+1) + (p\sigma^2+1+\epsilon) - 2((p-1)\sigma^2+\sqrt{\sigma^4+2\sigma^2+1+\epsilon}) \\
& = 2(\sigma^2 + 1) +\epsilon - 2\sqrt{(\sigma^2+1)^2+\epsilon}\\
& \leq \epsilon.
\end{align*}
We also have $W_1(P_{\beta_1}, P_{\beta_2})  \leq W_2(P_{\beta_1}, P_{\beta_2}) \leq \epsilon$. Since
\begin{align*}
L(\beta_1, \beta_2) & = \|\beta_1 - \beta_2\|_2 = \left\|\frac{\sqrt{\epsilon}}{\sigma}u\right\|_2 = \frac{\sqrt{\epsilon}}{\sigma},
\end{align*}
we obtain the bound $m(\epsilon, \Theta) \geq \frac{\sqrt{\epsilon}}{\sigma}$.

For $\mathcal{M}(0)$, Example 15.14 of \cite{Wai19} shows that for any fixed $X$, we have
\begin{equation}
\label{EqnWaiReg}
\inf_{\widehat{\beta}} \sup_{\betastar \in \Theta} \mprob\left(\frac{1}{n}\|X(\widehat{\beta}-\betastar)\|_2^2 \geq \frac{1}{64}\frac{p}{n}\right) > \frac{1}{2}.
\end{equation}
The construction used to derive the above result holds as long as the true parameter space $\Theta \subseteq \real^p$ contains the ball $\left\{\beta\in \real^p: \|\beta\|_2 \leq \frac{\sqrt{p}}{2}\right\}$. Thus, as long as $B_1 > \frac{\sqrt{p}}{2}$, we have the above result. 

We now use inequality~\eqref{EqnWaiReg} to derive a probabilistic lower bound on $\|\widehat{\beta} - \betastar\|_2^2$.
Since $X$ has a Gaussian distribution, we have the following standard result~\citep{Wai19}: For any $\delta > 0$, 
\begin{equation*}
 \mprob\left(\left\|\frac{X^TX}{n} - \sigma^2I\right\|_2 \leq C\sqrt{\frac{p}{n}} \right) > 1 - \delta.
\end{equation*}
Thus, with high probability,
\begin{equation*}
\lambda_{\max} \left(\frac{X^TX}{n}\right) \leq \sigma^2+ C\sqrt{\frac{p}{n}}.
\end{equation*}

Taking $\delta = \frac{1}{2}$, we have
\begin{align*}
\mprob\left(\left(\sigma^2 + C\sqrt{\frac{p}{n}}\right)\|\widehat{\beta}-\betastar\|_2^2 \geq \frac{1}{64}\frac{p}{n}\right) & \geq \frac{1}{2} \mprob\left(\frac{1}{n}\|X(\widehat{\beta}-\betastar)\|_2^2 \geq \frac{1}{64}\frac{p}{n}\right) > \frac{1}{4}.
\end{align*}
Thus, 
$$ \inf_{\widehat{\beta}} \sup_{\betastar \in \Theta}\mprob\left(\|\widehat{\beta}-\betastar\|_2^2 \geq \frac{1}{64(\sigma^2 + C\sqrt{\frac{p}{n}})}\frac{p}{n}\right) > \frac{1}{4}.$$
As long as $n > pC^2$, we have 
\begin{align*}
\inf_{\widehat{\beta}} \sup_{\betastar \in \Theta}\mprob\left(\|\widehat{\beta}-\betastar\|_2^2 \geq \frac{1}{64(\sigma^2 + 1)}\frac{p}{n}\right) & >  \inf_{\widehat{\beta}} \sup_{\betastar \in \Theta}\mprob\left(\|\widehat{\beta}-\betastar\|_2^2 \geq \frac{1}{64(\sigma^2 + C\sqrt{\frac{p}{n}})}\frac{p}{n}\right) \\
& > \frac{1}{4}.
\end{align*}
%
Thus, we can see that $\mathcal{M}(0) = \Omega\left(\sqrt{\frac{p}{n}}\right)$.


\subsection{Proof of Theorem~\ref{RegLower2}}
\label{AppRegLower2}


We choose the same values for $\beta_1$ and $\beta_2$ as in the proof of Theorem~\ref{RegLower}. Then the loss is
\begin{align*}
L(\beta_1, \beta_2) & = \left(\E_X(X^T\beta_1 - X^T\beta_2)^2\right)^{1/2} = \left(\E_X(\sqrt{\epsilon}X^Tu)^2\right)^{1/2} = \sqrt{\epsilon}.
\end{align*}
Thus, we have $m(\epsilon, \Theta) \geq \sqrt{\epsilon}$. An expression for $\mathcal{M}(0)$ is provided by inequality~\eqref{EqnWaiReg} above.

\end{document}